\newtheorem{thm}{Theorem}[section]
\newtheorem{lem}{Lemma}[section]
\newtheorem{cor}[lem]{Corollary}
\newtheorem{prop}[thm]{Proposition}
\newtheorem{rmk}{Remark}[section]
\theoremstyle{remark}
\newcommand{\diag}{\operatorname{diag}}
\newcommand{\Z}{\mathbb{Z}}
\newcommand{\R}{\mathbb{R}}
\newcommand{\NN}{\mathbb{N}}
\newcommand{\sM}{\mathcal{M}}
\newcommand{\mF}{\ensuremath{\mathcal{F}}}
\newcommand{\mM}{\ensuremath{\mathcal{M}}}
\newcommand{\T}{\ensuremath{\mathbb{T}}}
\newcommand{\Phg}{\Phi_{\mathrm{glob}}}
\newcommand{\Phl}{\Phi_{\mathrm{loc}}}
\def\to{\longrightarrow}
\def\vk{\vec k}
\def\bdef{\begin{definition}}
\def\endef{\end{definition}}
\def\bthm{\begin{thm}}
\def\ethm{\end{thm}}
\def\blm{\begin{lemma}}
\def\elm{\end{lemma}}
\def\brm{\begin{rmk}}
\def\erm{\end{rmk}}
\def\bprop{\begin{proposition}}
\def\eprop{\end{proposition}}
\def\bcor{\begin{cor}}
\def\ecor{\end{cor}}
\def\be{\begin{eqnarray}}
\def\ee{\end{eqnarray}}
\def\beal{\begin{aligned}}
\def\enal{\end{aligned}}
\def\al{\alpha}
\def\sg{\sigma}
\def\bt{\beta}
\def\eps{\varepsilon}
\def\phi{\varphi}
\def\R{\mathbb R}
\def\C{\mathbb C}
\def\T{\mathbb T}
\def\Z{\mathbb Z}
\def\gm{\gamma}
\def\Gm{\Gamma}
\def\th{\theta}
\def\dt{\delta}
\def\lb{\lambda}
\def\cS{\mathcal S}
\def\be {\begin{equation}}
\def\ee {\end{equation}}
\def\bdef{\begin{definition}}
\def\endef{\end{definition}}
\def\blm{\begin{lem}}
\def\elm{\end{lem}}
\def\beal{\begin{aligned}}
\def\enal{\end{aligned}}
\newtheorem{definition}{Definition}
\begin{document}

\title{Normally hyperbolic invariant
manifolds near strong double  resonance}

\maketitle

\begin{abstract}
  In the present paper we consider a generic perturbation of a nearly
  integrable system of $n$ and a half degrees of freedom
  \be \label{main-Hamiltonian} H_\eps(\th,p,t)=H_0(p)+\eps
  H_1(\th,p,t),\quad \th\in \T^n,\ p\in B^n, \ t\in \T=\R/\Z, \ee with
  a strictly convex $H_0$. For $n=2$ we show that at a strong double
  resonance there exist $3$-dimensional normally hyperbolic invariant
  cylinders going across. This is somewhat unexpected, because at a
  strong double resonance dynamics can be split into one dimensional
  fast motion and two dimensional slow motion. Slow motions are
  described by a mechanical system on a two-torus, which are
  %KK, change usually to
  generically chaotic.

  The construction of invariant cylinders involves finitely smooth
  normal forms, analysis of local transition maps near singular points
  by means of Shilnikov's boundary value problem, and
  Conley--McGehee's isolating block.
\end{abstract}

\vskip 0.2in

\author{\qquad \qquad \qquad \qquad V. Kaloshin\footnote{University of
    Maryland at College Park (\url{vadim.kaloshin@gmail.com})} \qquad
  \qquad K. Zhang\footnote{University of Toronto
    (\url{kzhang@math.utoronto.ca})}}

\vskip 0.3in

\markboth{V. Kaloshin, K. Zhang}{Normally hyperbolic invariant
  cylinders at a double resonance}

\section{Introduction}
\label{sec:intro}

Consider the near integrable system from the abstract with $B^n
\subset \R^n$ --- the unit ball around $0$, $\T^n$ --- being the
$n$-torus, and $\T$ --- the unit circle, respectively. Notice that for
$\eps=0$ action component $p$ stays constant. For completely
integrable systems coordinates of this form exist and called {\it
  action-angle}. The famous question, called {\it Arnold diffusion},
is the following

\vskip 0.1in

{\bf Conjecture} \cite{Ar1,Ar2}\ {\it For any two points $p',p''\in
  B^2$ on the connected level hyper-surface of $H_0$ in the action
  space there exist orbits connecting an arbitrary small neighborhood
  of the torus $p=p'$ with an arbitrary small neighborhood of the
  torus $p=p''$, provided that $\eps \ne 0$ is sufficiently small and
  that $H_1$ is generic.}

\vskip 0.1in

A proof of this conjecture for $n=2$ is announced by Mather \cite{Ma}.

The classical way to approach this problem is to consider a finite
collection of resonances $\Gm_1, \ \Gm_2, \dots, \Gm_{N+1}\subset B^2$
so that $\Gm_1$ intersects a neighborhood of $p'$, $\Gm_{N+1}$ intersects
a neighborhood of $p''$,
%KK, adjust
and $\Gm_{j+1}$ intersects $\Gm_j$ for
$j=1,\dots,N$ and diffuse along them. This naive idea faces
difficulties at various levels.

Fix an integer relations $\vec k_1 \cdot \partial_p H_0 +k_0=0$ with
$\vec k=(\vec k_1,k_0) \in (\Z^2 \setminus 0) \times \Z$ and $\cdot$
being the inner product define one-dimensional resonances.  Under the
condition that the Hessian of $H_0$ is non-degenerate, each resonance
defines a smooth curve embedded into the action space $\Gm_{\vec k}=\{p\in
B^2:\ \vec k_1 \cdot \partial_p H_0 +k_0=0\} \footnote{such a curve
  might be empty}.$ Such a curve is called a {\it resonance}. If one
intersects resonances corresponding to two linearly independent $\vk$
and $\vk'$ we get isolated points. In the case when both $\vec k$ and
$\vec k'$ are relatively small, i.e. $|\vec k|,\ |\vec k'|<K$ for some
$K>1$.  We call such an intersection {\it a $K$-strong double resonance} or
%KK, add
simply a strong double resonance (if using $K$ is redundant); see Figure \ref{res-net}.
%K, let's add
So far only examples of strong double resonances have been studied
(see \cite{Bs,KLS,KS,KZZ}).

\subsection{Diffusion along single resonances by means of crumpled
normally hyperbolic cylinders}
Fix one resonance $\Gm$. In \cite{BKZ} we prove that depending on a
generic $H_1$ ({\it but not on $\eps$!}) there are a finite number of
punctures of $\Gm$. In other words, there is $K=K(H_1)>0$ such that
way from $\eps^{1/6}$-neighborhood of any $K$-strong double resonance
there are diffusing orbits
%K, I add
along $\Gm$. Moreover, these diffusing orbits
are constructed in two steps:

% \vskip 1in

\begin{itemize}
\item Construct invariant normally hyperbolic invariant cylinders
  (NHIC) ``connecting'' a $\eps^{1/6}$-neighbor\-hood of one
  $K$-strong double resonance with a $\eps^{1/6}$-neighbor\-hood of
  the next one on $\Gm$.

\item Construct orbits diffusing along these cylinders, which is done
  using Mather variational method \cite{Be, CY1, CY2}.
\end{itemize}

%KK, rephrase & add figure
It turns out that these cylinders are crumpled in the sense that
its regularity blows up as $\eps \to 0$. See Figure \ref{fig:crumpled}.
Existence of crumpled NHICs is the new phenomenon, discovered in
\cite{BKZ}. In spite of this irregularity, one can use them for
diffusion.

\begin{figure}[t]
  \centering
  \includegraphics[width=4in]{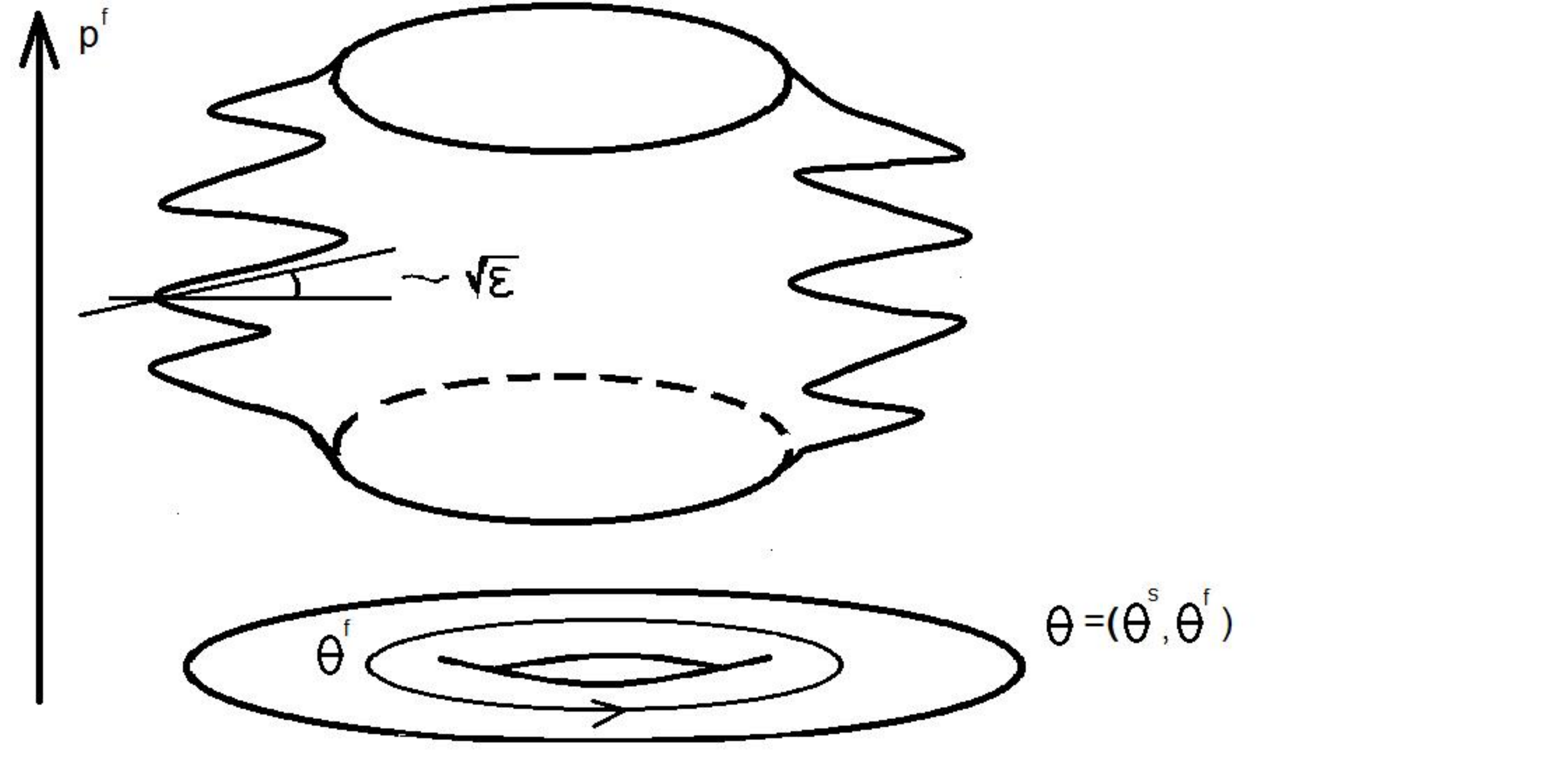}
  %,scale=0.5in
  \caption{Crumpled Cylinders}
  \label{fig:crumpled}
\end{figure}

The main topic of the present paper is {\it how to diffuse across a
  strong double resonance}. We propose a heuristic description and
prove existence of underlying normally hyperbolic invariant manifolds
(NHIMs) for it.

\subsection{Strong double resonances and slow mechanical systems}

We fix two independent resonant lines $\Gm, \ \Gm'$ and a strong
double resonance $p_0\in \Gm \cap \Gm'\subset B^2$. Then the standard
averaging along the one-dimensional fast direction gives rise to a
slow mechanical system $H^s=K(I^s)-U(\th^s)$
%Z, changed p^s to I^s as rescaling is needed. 
of two degrees of freedom, where $\th^s\in \T^s$ and
$I^s$ is a rescaled conjugate variable. Namely, in $O(\sqrt \eps)$-neighborhood of $p_0$,  after a canonical coordinate change and rescaling the action variables, the flow of the Hamiltonian $H_\epsilon$ is conjugate to that of 
\[
 c_0/\sqrt{\epsilon} + \sqrt{\epsilon}(K(I^s)-U(\th^s)) + O(\epsilon).
\]
Precise definitions of $c_0$, $\th^s, I^s$ are in Section \ref{slow-fast}.
%K, rephrase
The slow kinetic energy $K$ and the slow potential energy $U$ are
defined in (\ref{slow-kinetic}--\ref{slow-potential}), respectively.

\begin{figure}[t]
  \begin{center}
    \includegraphics[width=3in]{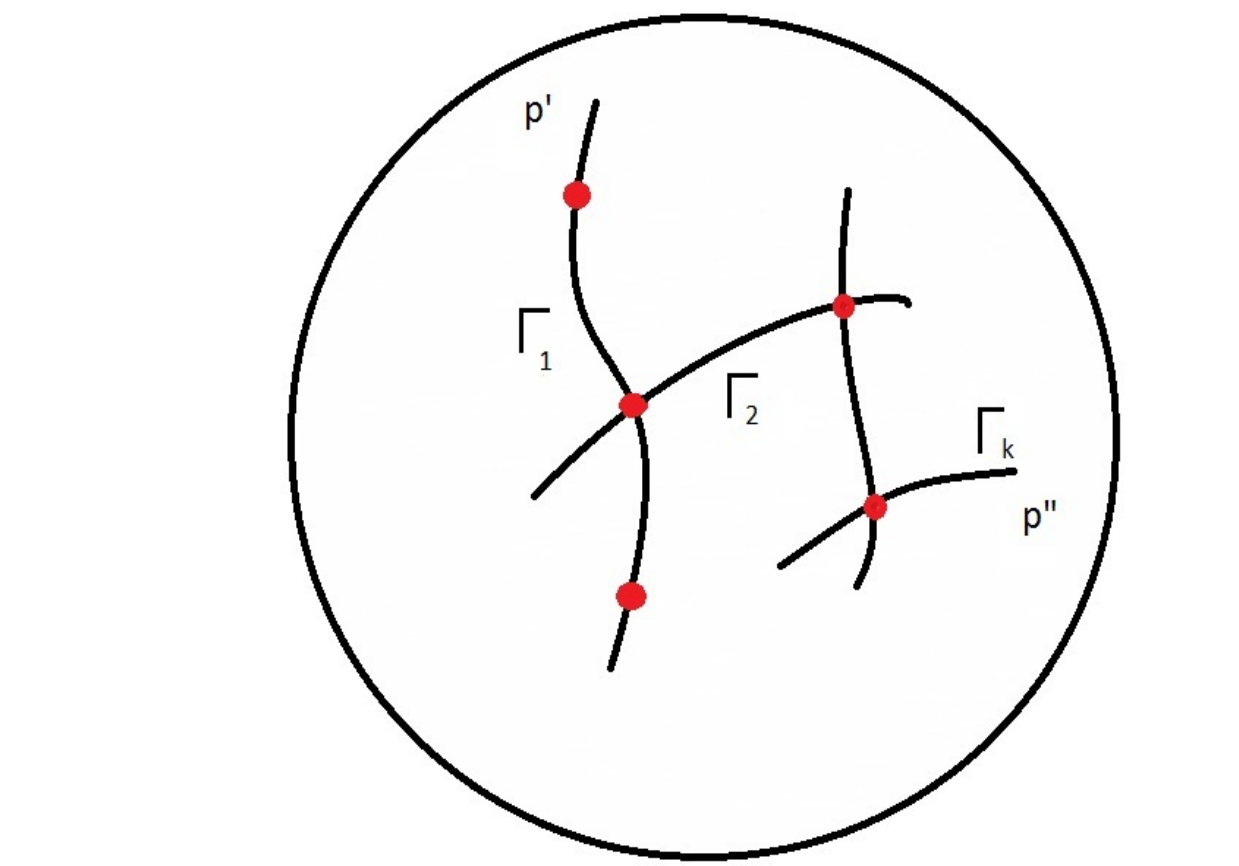}
  \end{center}
  \caption{Resonant net}
  \label{res-net}
\end{figure}

%KK, rephrase
From now on we analyze the slow mechanical system $H^s=K(I^s)-U(\th^s)$.
Denote by
$$
\cS_E=\{(\th^s,I^s):\ H^s=E\}
$$
an energy surface. Without loss of generality assume that the minimum
$\min_{\th^s} U(\th^s)=0$, it is unique, and occurs at
$\th^s=0$. According to the Mapertuis principle for a positive energy
$E>0$ orbits of $H^s$ restricted to $\cS_E$ are reparametrized
geodesics of the Jacobi metric
\be \label{Jacobi-metric}
\rho_E(\th) = 2(E+U(\th))\ K.
\ee

Notice that the resonance $\Gm \subset B^2$ (resp. $\Gm' \subset B^2$)
induces an integer homology class $h$ (resp. $h'$) on $\T^s$, i.e.
$h$ (resp. $h'$) $\in H_1(\T^s,\Z)$. Denote by
$\gm_h^E$ (resp. $\gm_{h'}^E$) a minimal geodesic of $\rho_E$ in the
homology class $h$ (resp. $h'$).
%K, rephrase
For example, if $H_\eps(\th,p,t) = \dfrac 12 p^2+\eps H_1(\th,p,t)$,
  $\Gm=\{p:\ \partial_{p_2}H_0=p_2=0\}$ (resp.
  $\Gm'=\{p:\ \partial_{p_1}H_0=p_1=0\}$). Then on the slow torus
  $\T^s\ni (p_1,p_2)$ we have $h=(1,0)$ (resp. $h'=(0,1)$).
On the unit energy surface $S_1$ the strong resonance occurs at
$p_0=(0,0,1)\in \Gm\cap \Gm'$.

\subsection{Two types of NHIMs at a strong double resonance}

Notice that diffusing along $\Gm$ for the Hamiltonian $H$ corresponds
to changing slow energy $E$ of $H^s$ along the homology class $h$. In
particular, we need to get across zero energy. However,
$\cS_0=\{(\th^s,I^s):\ H^s=0\}$ is the critical energy surface,
namely, the Jacobi metric is degenerate at the origin.

%Z, We don't prove this in the next section, so rephrase:
%In the next section we show that 
There are
{\it at least two special
}\footnote{in order to find these two homology classes one needs to
  find minimal geodesics $\gm_0^h$ in each integer homology class and
  minimize its length over all $h\in H_1(\T^s,\Z)$.  Then pick two
  Jacobi-shortest ones.} integer homology classes $h_1,h_2\in
H_1(\T^s,\Z)$ such that minimal geodesics of $g_0$ are
non-self-intersecting. 
%Z, \gamma_E defined as minimal geodesics is undefined for negative energies. 
% rephrased the whole passage. 
For $i=1,2$, we denote by $\gamma_E^{-h_i}$ the curve obtained by the time reversal $I^s\to -I^s$ and $t\to -t$. Then   
  \be \label{simple-NHIC}
  \beal \textit{
    the union   of }
    \gm_E^{\pm h_i}\textit{ over }0 \le E \le E_0\textit{
  is contained in a }\\
  C^1\textit{ smooth two-dimensional NHIM }\mathcal M_{h_i}. \qquad \quad
  \enal
  \ee

This imply that the original Hamiltonian system
$H_\eps$ also has two three-dimensional NHIM $C^1$-close to $\mathcal
M_{h_1}$ and $\mathcal M_{h_2}$. By the reason to be clear later we call
such cylinders {\it simple loop cylinders}.

Moreover, for small $E_0>0$ and all energies
%K, later we use $E_0$ so I put it in
$E_0<E<E_0^{-1}$ except finitely many $\{E_j\}_{j=1}^N \subset [E_0,E_0^{-1}],
E_j<E_{j+1}, j=1,\dots,N-1$ we show that for each $1\le j\le N$
a proper union $\mathcal M^h_j$ of $\gm_E^{h}$ over $E_j<E<E_{j+1}$
form $C^1$ smooth NHIC. This imply that the original Hamiltonian system
$H_\eps$ also has a NHIC $C^1$-close to $\mathcal M^h$. See the Appendix
for more details.

\subsubsection{Non-simple figure $8$ loops}\label{figure8-case-only}
%KK, rephrase
If minimal geodesics of $\rho_0$ are self-intersecting the situation was
described by Mather \cite{Ma4}. Generically $\gm^E_h$ accumulates onto
the union of two simple loops, possibly with multiplicities. More precisely,
given $h\in H_1(\T^s,\Z)$ generically 
%Z,  rephrase
there are homology classes
$h_1,\ h_2 \in H_1(\T^s,\Z)$ and integers $n_1,\ n_2\in\Z_+$ such that
the corresponding  minimal geodesics $\gm_0^{h_1}$ and $\gm_0^{h_2}$
are simple and $h=n_1h_1+n_2h_2$. Denote $n=n_1+n_2$. 
%Z, general homology

For $E>0$, $\gamma^h_E$ has no self intersection. As a consequence, there is a unique way to represent $\gamma_h^0$ as a concatenation of $\gamma^{h_1}_0$ and $\gamma^{h_2}_0$. More precisely, we have the following lemma. 
\begin{lem}\label{homotopy}
  There exists a  sequence $\sigma=(\sigma_1, \cdots, \sigma_n)\in \{1,2\}^n$, unique up to cyclical translation, such that 
  $$\gamma^h_0 = \gamma^{h_{\sigma_1}}_0 * \cdots * \gamma^{h_{\sigma_n}}_0. $$
\end{lem}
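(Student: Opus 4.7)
The plan is to parametrize $\gm^h_0$ as a closed curve, mark the times at which it visits the degenerate point $\th^s=0$ of the Jacobi metric $\rho_0$, and show that the resulting decomposition into sub-loops based at the origin yields a sequence in $\{1,2\}^n$ of the asserted length, uniquely determined up to cyclic shift.

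First I would inherit a parametrization of $\gm^h_0:[0,T]\to\T^s$ as the limit of the simple closed curves $\gm^h_E$ as $E\to 0^+$. Since $\gm^h_0$ accumulates onto $\gm^{h_1}_0\cup\gm^{h_2}_0$, its image lies in this union; and since each $\gm^h_E$ with $E>0$ has no self-intersection, the only possible self-intersections of the limit curve occur at the single point where the convergence fails, namely the degeneracy point $\th^s=0$. Let $0=t_0<t_1<\cdots<t_n=T$ denote the times at which $\gm^h_0$ returns to the origin.

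Next I would classify each piece $\gm^h_0|_{[t_{i-1},t_i]}$. Such a piece is a loop based at the origin whose interior avoids $\th^s=0$ and which is contained in $\gm^{h_1}_0\cup\gm^{h_2}_0$. Because the two simple loops meet only at the origin, the piece cannot transition between them except at the origin, so it stays in exactly one; and being a loop at the origin inside a simple closed curve based there, it must traverse the whole of either $\gm^{h_1}_0$ or $\gm^{h_2}_0$. Set $\sigma_i=1$ or $\sigma_i=2$ accordingly. To rule out the reverse orientation (a sub-loop in class $-h_{\sigma_i}$), I would use minimality of $\gm^h_0$ in class $h$: any such reversed sub-loop must be cancelled in homology by another sub-loop in class $+h_{\sigma_i}$ to recover the net class $n_1h_1+n_2h_2$, and the pair of cancelling sub-loops can be excised to produce a curve in class $h$ of strictly smaller $\rho_0$-length, contradicting minimality. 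Linear independence of $h_1,h_2$ then forces the symbol $1$ to appear exactly $n_1$ times and the symbol $2$ exactly $n_2$ times, so $n=n_1+n_2$. Uniqueness up to cyclic translation follows because, once the parametrized curve $\gm^h_0$ is fixed, the ordered list of sub-loops is determined; changing the initial time $t_0$ only cyclically permutes $\sigma$.

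The main obstacle is the orientation step. The counting argument is straightforward, but its rigor depends on confirming that the excision of a cancelling pair of sub-loops actually yields a continuous representative of $h$ with strictly smaller Jacobi length; this requires care because the excised segments meet the remainder of the curve only at the origin, where $\rho_0$ is singular and the usual triangle-inequality arguments for geodesics require justification. A secondary delicate point is the transfer of the no-self-intersection property of $\gm^h_E$ to the limit $\gm^h_0$: one must argue that two distinct sub-loops of $\gm^h_0$ meeting at a common interior point (other than the origin) would produce self-intersections of the nearby $\gm^h_E$, which is possible but needs the appropriate mode of convergence (parametrized, not merely Hausdorff).
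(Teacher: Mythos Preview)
Your approach is correct, modulo the technical points you already flag, but it differs genuinely from the paper's. The paper argues purely topologically: after normalizing so that $h_1=(1,0)$ and $h_2=(0,1)$, it punctures the torus at a point $y$ off all three curves and works in the free group $\pi_1(\T^2\setminus\{y\})=\langle\gamma_0^{h_1},\gamma_0^{h_2}\rangle$. The simple curve $\gamma_E^h$ (for small $E>0$, in the same free-homotopy class as $\gamma_0^h$) is shown, via an explicit linear isotopy in the universal cover, to be isotopic to the straight line of slope $n_1/n_2$; since the lift of that line is monotone in both coordinates, every exponent in the corresponding reduced word equals $+1$, and uniqueness of $\sigma$ up to cyclic shift is then the standard fact about conjugacy classes in a free group.

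Your route instead exploits the minimality of $\gamma_0^h$ to exclude reversed sub-loops (and, once made explicit, back-tracking ones as well---the same excision argument handles that). This is more direct and avoids the free-group and isotopy machinery, but it leans on the variational structure rather than only on the simplicity of $\gamma_E^h$ that the paper's formulation highlights. The paper's argument, by contrast, never invokes minimality and would apply equally to any simple closed curve in the homology class $h$; in exchange it needs the isotopy-to-a-straight-line step, which you bypass entirely.
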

Lemma~\ref{homotopy} will be proved in  Appendix~\ref{sec:homology}.  In this case for
%Z, with general homology maybe we shouldn't call the cylinder figure 8?
small energies this cylinder resembles the figure $8$ and
we call it {\it  two leaf cylinder} and call 
%K
{\it the corresponding $\gamma^h_0$ non-simple}.

We would like to point out that Jean-Pierre Marco \cite{Mar1, Mar2} is
studying similar ideas.

\subsubsection{Kissing cylinders}

%Z rephrase
% If one considers the union $\mathcal M_h^8$
% %K, want to emphasise
% {\it all the way to zero energy, then it is not a manifolds(!)}
% as $\gm_0^h$.

If the loop $\gamma^h_0$ is non-simple, then the union $\mathcal M_h^8=\bigcup_{0 \le E \le E_0}\gamma^h_E$ \emph{is not a manifold} at $\gamma_h^0$! 
However, $\mathcal M_{h_1}, \mathcal M_{h_2},$
and $\mathcal M_h^8$ all have a tangency at the origin (see Remark
\ref{kissing-cylinders}).  Moreover, expanding and contracting
directions at the origin of all the three normally hyperbolic
invariant manifolds are parallel to strong unstable and strong stable
directions. Simple dimension consideration makes us believe that for
the original Hamiltonian $H_\eps$ has %invariant manifolds have
%KK, rephrase
NHIMs $\mathcal M^\eps_{h_1}, \mathcal M^\eps_{h_2}, \mathcal M^8_{h}$
with transversal intersections of invariant manifolds.

\begin{figure}[t]
  \centering
  \includegraphics[width=3in]{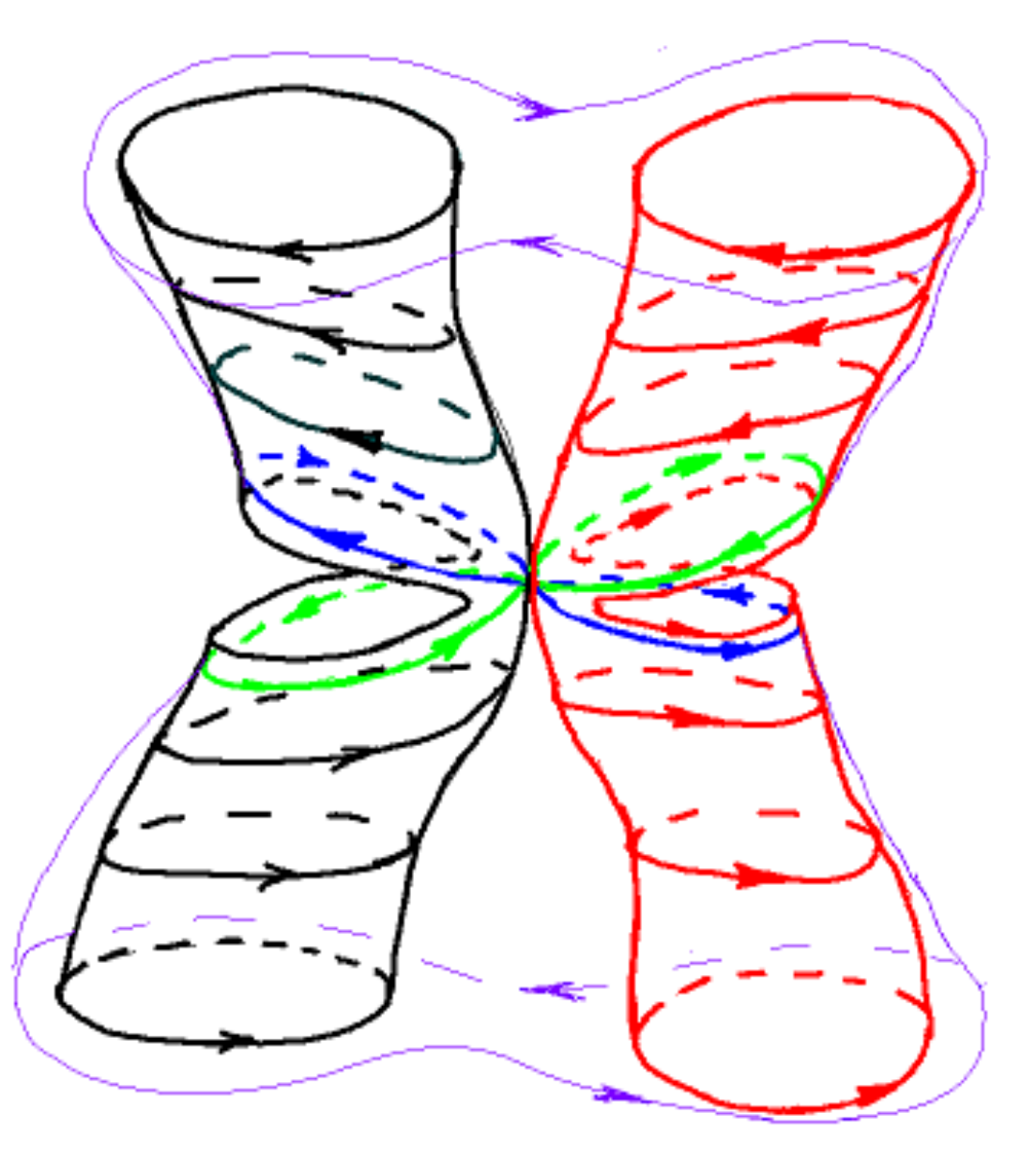}
  %,scale=0.5in
  \caption{Kissing Cylinders}
  \label{fig:kissing}
\end{figure}

\subsection{An heuristic diffusion through strong double resonances}

We hope the following mechanism of diffusion through double resonance
takes place. As we mentioned above in \cite{BKZ} we show that away
from $\eps^{1/6}$-neighborhood of strong double resonances there are
crumpled NHIC and orbits diffusing along them. It turns out
that in the region where distance to the center of a strong double
resonance is between $[\eps^a,\eps^{1/6}]$ for some
%KK,
$1/6<a<1/2$ we can slightly modify argument from \cite{BKZ} and show
that the system $H_\eps(\th,p,t)$ has a NHIC. Moreover, this cylinder
is smoothly attached to the crumpled NHICs build in \cite{BKZ}.

\subsubsection{First intermediate zone}

Fix $C=C(H_0,H_1)\gg 1$, but independent of $\eps$.
In the region where distance to the center of a strong double
resonance is between $[C\sqrt \eps,\eps^a]$ we define
{\it a slow-fast mechanical system} and show that it
approximates dynamics of our system $H_\eps(\th,p,t)$
well enough to establish existence of a NHIC. Moreover,
this cylinder is smoothly attached to
the one from the region $[\eps^a,\eps^{1/6}]$.

%K, later we use $E_0$ so I put it in and add discussion
%of intermediate slow energies

\subsubsection{Second intermediate zone}
Let $C=E_0^{-1}$.
%KK, rephrase
Consider the region where distance to the center of a strong double
resonance is between $[E_0\sqrt \eps,E_0^{-1}\sqrt \eps]$.
In this regime dynamics is well approximable by
{\it a slow mechanical system}. Thus, we need to study
a mechanical system of two degrees of freedom on an interval
of energy surfaces and its family of minimal geodesics $\{\gm_E^{h}\}_E$
in a given homology class $h\in H_1(\T^s,\Z)$. The left
boundary $E_0 \sqrt \eps$ means that we need to study
a mechanical system for slow energies  $E>E_0$.

Simple analysis, carried out in  Appendix~\ref{sec:intermediate}, shows that
and all energies $E_0<E<E_0^{-1}$ except finitely many
$\{E_j\}_{j=1}^N \subset [E_0,E_0^{-1}],\ E_j<E_{j+1},\
j=1,\dots,N-1$
%we show that for each $1\le j\le N$
a proper union $\mathcal M^h_j$ of $\gm_E^{h}$ over $E_j<E<E_{j+1}$
form $C^1$ smooth NHIC. Application of Conley--McGehee's isolating
block implies that the original Hamiltonian system
$H_\eps$ also has a NHIC $C^1$-close to $\mathcal M^h_j$.
Moreover, one can construct diffusing orbits along
$\{\mathcal M^h_j\}_{j=1}^{N-1}$. This part is very much
analogous to the one done in \cite{BKZ}.

\begin{figure}[t]
  \centering
  \includegraphics[width=5in]{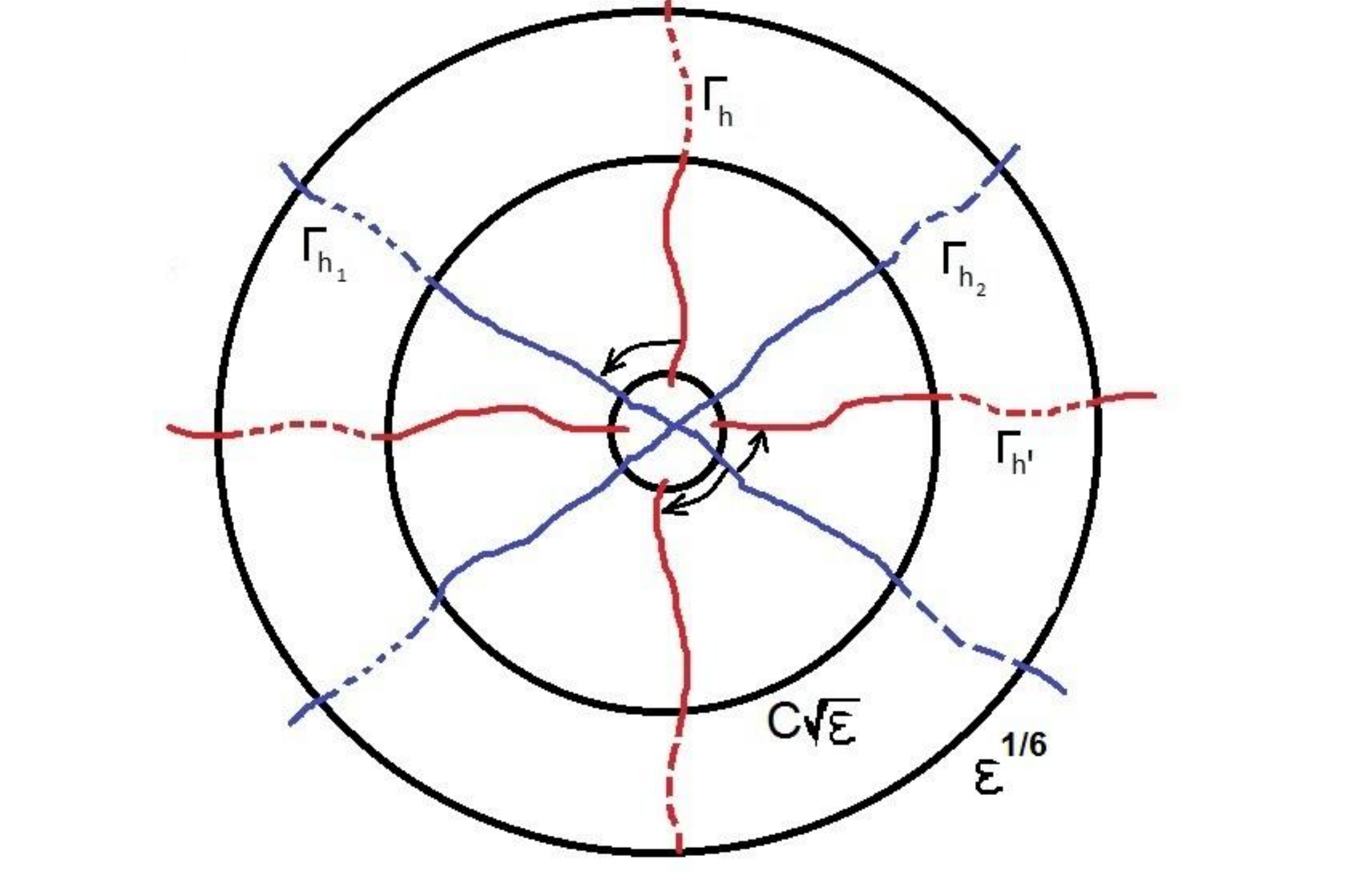}
  %,scale=0.5in
  \caption{Heuristic description}
  \label{fig:heuritic}
\end{figure}

Now we arrive to slow energy $E_0\sqrt \eps$ near a strong double
resonance and need to consider several cases.
Heuristic description of our mechanism is on Figure \ref{fig:heuritic}.
%K, I add simple loop case
First, we cross a strong double resonance along $\Gm$.

\subsubsection{Crossing through along a simple loop $\Gm$}
%KK, rephrase
If $\gm^h_0$ is simple or does not pass through the origin at all,
then an orbit enters along a NHIC $\mathcal M^h_1$ and can diffuse
along NHIM $\sM_{h}$ across the center of a strong double resonance
$p_0$ to ``the other side''.

\subsubsection{Crossing through along a non-simple $\Gm$}
If $\gm^h_0$ is non-simple,
%KK, add
i.e the union of two simple loops, then an orbit enters along a NHIC
$\mathcal M^h_1$. As it diffuses toward the center of a strong double
resonance $p_0$ the cylinder $\mathcal M_h^8$ becomes a two leaf
cylinder and its boundary approaches the figure $8$.

For a small enough energy $\dt>0$ of the mechanical system $H^s$ the
two leaf cylinder $\mathcal M_h^8$ is almost tangent to a certain simple
loop NHIC $\mathcal M_{h_i}$. Moreover, near the origin both of normally
hyperbolic invariant manifolds have almost parallel most contract
and expanding directions. As a result there should be orbits
{\it jumping  from the two leaf cylinder $\mathcal M_h^8$ to
a simple loop one $\mathcal M_{h_i}$ from} (\ref{simple-NHIC}).
Then such orbits can cross
the double resonance along $\mathcal M_{h_i}$. After that it
jumps back on the opposite branch of $\mathcal M_h^8$ and diffuse
away
%K, little change
along $\Gm$ as before.

\subsubsection{Turning a corner from $\Gm$ to $\Gm'$}
Now we cross a strong double resonance by entering along $\Gm$ and
exiting along $\Gm'$. As before an orbit enters along a NHIC
$\mathcal M^h_1$ constructed in the second intermediate zone. As
it diffused toward the center of a strong double resonance $p_0$
the cylinder $\mathcal M_h^8$ becomes a two leaf cylinder.

As in the previous case if we diffuse along $\mathcal M_{h}^8$ to
a small enough energy.

%Depending on a position of $h'$ relative to
%simple homology classes $h_1$ and $h_2$ and type of homology $h'$
\begin{itemize}
\item If $h'$ has a simple loop $\gm^{h'}_0$, then we jump
to $\mathcal M_{h'}$ directly from $\mathcal M_{h}^8$ and
cross the strong double resonance along $\mathcal M_{h'}$.

\item If $h'$ is non-simple, then $\mathcal M_{h'}^8$ also becomes
a double leaf cylinder. In this case we first jump onto a simple
loop cylinder $\mathcal M_{h_i}$, cross the double resonance, and
only afterward jump onto $\mathcal M_{h'}^8$.
\end{itemize}

%K, if you are not convinced we can remove the next two passages.
To summarize we expect that crumpled NHICs from \cite{BKZ}
can be continued from a $\eps^{1/6}$-neighborhood of $p_0$
to $C\sqrt \eps$-neighborhood
%KK, add
and can be used for diffusion. Thus,  we distinguish two
essentially different regions: ($C\sqrt \eps$-)near a strong
double resonance and ($C\sqrt \eps$-)away from it.
The main focus of this paper is the first case.

\subsection{Formulation of the main results
(small energy)}\label{formulation-main-result}
%K,I'd like to add intermediate energies in the appendix
The case of finite
%KK, add
non-small energies is treated in  Appendix~\ref{sec:intermediate}.
We will formulate our main results in terms of the slow mechanical
system
%K, we use p^s so I change it here
%Z, changed to I^s as rescaling is needed. 
\be \label{slow-mechanical}
H^s(I^s, \theta^s) = K(I^s) - U(\theta^s).
\ee
We make the following assumptions:
\begin{enumerate}
\item[A1.] The potential $U$ has a unique non-degenerate minimum at $0$
  and $U(0)=0$.
\item[A2.] The linearization of the Hamiltonian flow at $(0,0)$
has  distinct eigenvalues $-\lb_2<-\lb_1<0<\lb_1<\lb_2$
%KK, I decided to remove
  %K, if you don't mind I add this
%and $2\lb_1\ne \lb_2$.
\end{enumerate}
In a neighborhood of $(0,0)$, there exists a local coordinate system
$(u_1, u_2, s_1, s_2)=(u,s)$ such that the $u_i-$axes correspond to
the eigendirections of $\lambda_i$ and the $s_i-$ axes correspond to
the eigendirections of $-\lambda_i$ for $i=1,2$. Let $\gamma^+$ and
$\gamma^-$ be two homoclinic orbits of $(0,0)$ under the Hamiltonian
flow of $H^s$.
%K, if you don't mind I change \gm to \gm^+ to fit \gm^-.
% I do not add + or - for \gm_i's though.
This setting applies to the case of {\it a simple loop cylinder}, with
$\gamma^+=\gamma_0^{h, +}$ and $\gamma^-$ being the time reversal of
$\gamma_0^{h,+}$, denoted $\gamma_0^{h, -}$, (which is the image of
$\gamma_0^{h,+}$ under the involution $I^s\mapsto -I^s$ and $t\mapsto -t$).
%K, add
We call $\gm^+$ (resp. $\gm^-$) {\it simple loop}.

We assume the following of the homoclinics $\gamma^+$ and $\gamma^-$.
\begin{enumerate}
\item[A3.] The homoclinics $\gamma^+$ and $\gamma^-$ are not tangent to
  $u_2-$axis or $s_2-$axis at $(0,0)$. This, in particular, imply that
  the curves are tangent to the $u_1$ and $s_1$ directions. We assume
  that $\gamma^+$ approaches $(0,0)$ along $s_1>0$ in the forward time,
  and approaches $(0,0)$ along $u_1 >0$ in the backward time;
  $\gamma^-$ approaches $(0,0)$ along $s_1<0$ in the forward time, and
  approaches $(0,0)$ along $u_1 <0$ in the backward time.
\end{enumerate}

For the case of the double leaf cylinder, we consider two homoclinics
$\gamma_1$ and $\gamma_2$ that are in the same direction instead of
being in the opposite direction. More precisely, the following is
assumed.

\begin{enumerate}
\item[A$3'$.]  The homoclinics $\gamma_1$ and $\gamma_2$ are not tangent
  to $u_2-$axis or $s_2-$axis at $(0,0)$. Both $\gamma_1$ and
  $\gamma_2$ approaches $(0,0)$ along $s_1>0$ in the forward time, and
  approaches $(0,0)$ along $u_1 >0$ in the backward time.
\end{enumerate}

Given $r>0$ and $0<\delta<r$, let $B_r$ be the $r-$neighborhood of
$(0,0)$ and let
$$\Sigma^s_{\pm}=\{s_1=\pm \dt\}\cap B_r, \quad  \Sigma^u_{\pm}=\{u_1=
\pm\dt\} \cap B_r$$ be four local sections contained in $B_r$. We have
four local maps
$$
\Phl^{++}:U^{++}(\subset \Sigma^s_+)\to \Sigma^u_+, \qquad
\Phl^{-+}:U^{-+}(\subset\Sigma^s_-)\to \Sigma^u_+,
$$
$$\Phl^{+-}:U^{+-}(\subset \Sigma^s_+)\to \Sigma^u_-, \qquad
\Phl^{--}:U^{--}(\subset \Sigma^s_-)\to \Sigma^u_-.
$$
The local maps are defined in the following way. Let $(u,s)$ be in the
domain of one of the local maps. If the orbit of $(u,s)$ escapes $B_r$
before reaching the destination section, then the map is considered
undefined there. Otherwise, the local map maps $(u,s)$ to the first
intersection of the orbit with the destination section. The local map
is not defined on the whole section and its domain will be made
precise later.

For the case of
%K,rephrase
simple loop cylinder, i.e. assume A3, we can define
two global maps corresponding to the homoclinics $\gamma^+$ and
$\gamma^-$. By assumption A3, for a sufficiently small $\delta$, the
homoclinic $\gamma^+$ intersects the sections $\Sigma^{u,s}_+$ and
$\gamma^-$ intersects $\Sigma^{u,s}_-$.  Let
%K, same as above q should be exiting p --- entering to fix section 5.
%Z, It seems easier to switch p and q here than switching section 5.
%Z, to be clear, I am sticking to the convention that p is in the stable section and q is in the unstable section.
$p^+$ and $q^+$
(resp. $p^-$ and $q^-$) be the intersection of $\gm^+$
(resp. $\gamma^-$) with $\Sigma^u_+$ and $\Sigma^s_+$
(resp. $\Sigma^u_-$ and $\Sigma^s_-$) . Smooth dependence on initial
conditions implies that for the neighborhoods $V^\pm \ni q^\pm$ there
are a well defined Poincar\'e return maps
$$
\Phg^+:V^+ \to \Sigma^s_+, \qquad \Phg^-:V^- \to \Sigma^s_-.
$$
When A$3'$ is assumed, for $i=1,2$, $\gamma^i$ intersect $\Sigma^u_+$ at
$q^i$ and intersect $\Sigma^s_+$ at $p^i$. The global maps are denoted
$$
\Phg^1:V^1 \to \Sigma^s_+, \qquad \Phg^2:V^1 \to \Sigma^s_+.
$$
%Z, it seems more appropriate to switch Figure 5 and 6. 
\begin{figure}[t]
 \centering
  \def\svgwidth{3in}
  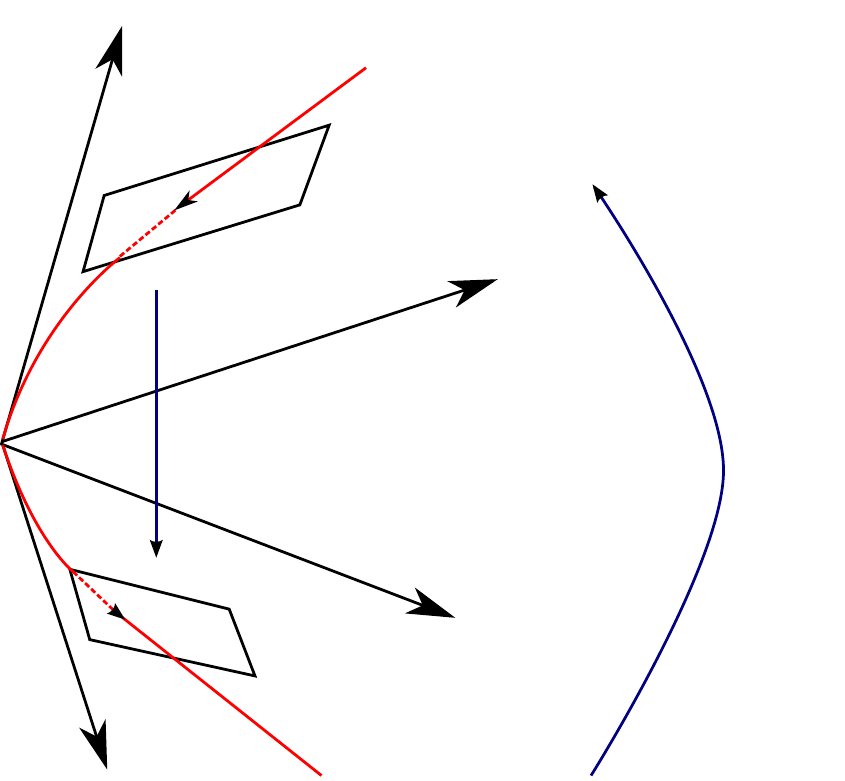
 \caption{Global and local maps for $\gamma^+$}
 \label{fig:gb-lc-maps}
\end{figure}

The composition of local and global maps for the periodic orbits
shadowing $\gamma^+$ is illustrated in Figure~\ref{fig:gb-lc-maps}.

We will assume that the global maps are ``in general position''. We
will only phrase our assumptions A4a and A4b for the homoclinic
$\gamma^+$ and $\gamma^-$. The assumptions for $\gamma^1$ and $\gamma^2$
are identical, only requiring different notations and will be called
A4a$'$ and A4b$'$. Let $W^s$ and $W^u$ denote the local stable and
unstable manifolds of $(0,0)$. Note that $W^u\cap \Sigma^u_\pm$ is
one-dimensional and contains
%K, there was a conflict of notations with propositions in section 5.
%here it was p's exiting and q' are entering.
% in section 5 it is vise versa
$q^\pm$. Let $T^{uu}(q^\pm)$ be the
tangent direction to this one dimensional curve at $q^\pm$. Similarly,
we define $T^{ss}(p^\pm)$ to be the tangent direction to $W^s\cap
\Sigma^s_\pm$ at $p^\pm$.
\begin{enumerate}
\item[A4a.] Image of strong stable and unstable directions under
  $D\Phg^\pm(q^\pm)$ is transverse to strong stable and unstable
  directions at $p^\pm$ on the energy surface $S_0=\{H^s=0\}$.
  %K, rephrase
  For the restriction to $S_0$ we have
  \[
  D\Phg^+(q^+)|_{TS_0} T^{uu}(q^+) \pitchfork T^{ss}(p^+), \quad
  D\Phg^-(q^-)|_{TS_0} T^{uu}(q^-) \pitchfork T^{ss}(p^-).
  \]
\item[A4b.] Under the global map, the image of the plane
  $\{s_2=u_1=0\}$ intersects $\{s_1=u_2=0\}$ at a one dimensional
  manifold, and the intersection transversal to the strong stable and
  unstable direction. More precisely, let
  $$  L(p^\pm)= D\Phg^\pm(q^\pm)\{s_2=u_1=0\}\cap \{s_1=u_2=0\}, $$
  we have that $\dim L(p^\pm)=1$, $L(p^\pm)\ne T^{ss}
  %KK,missing base point twice
  (p^\pm)$ and $D(\Phg^\pm)^{-1}L(p^\pm)\ne T^{uu}(q^\pm)$.
%K, double check
%Z, I think it is correct.
\item[A$4'$.] Suppose conditions A4a and A4b hold for both $\gm_1$ and $\gm_2$.
\end{enumerate}

We show that under our assumptions, for small energy, there exists
``shadowing'' periodic orbits close to the homoclinics. These orbits
were studied by Shil'nikov \cite{Shil67}, Shil'nikov-Turaev \cite{TS89},
and Bolotin-Rabinowitz \cite{BR}.
\begin{thm}\label{periodic}
  \begin{enumerate}
  \item In the simple loop case, we assume that the assumptions A1 - A4 hold for $\gamma^+$ and $\gamma^-$. Then there exists
    $E_0>0$ such that for each $0<E\le E_0$, there exists a periodic
    orbit $\gamma_E^+$ corresponding to a fixed point of the map
    $\Phg^+\circ\Phl^{++}$ restricted to the energy surface $\cS_E$.

    For each $-E_0 \le E < 0$, there exists a periodic orbit
    $\gamma^c_E$ corresponding to a fixed point of the map $\Phg^-
    \circ \Phl^{+-} \circ \Phg^+ \circ \Phl^{-+}$ restricted to the
    energy surface $\cS_E$.

    For each $0 < E \le E_0$, there exists a periodic orbit
    $\gamma_E^-$ corresponding to a fixed point of the map $\Phg^-
    \circ \Phl^{--}$ restricted to the energy surface $\cS_E$.
%K, double check A4'
  \item In the non-simple case, assume that the assumptions A1, A2, A3$\,'$ and A4$'$ hold for $\gamma^1$ and $\gamma^2$. Then
    there exists $E_0 >0$ such that for $0<E\le E_0$, the following hold. For any $\sigma=(\sigma_1, \cdots, \sigma_n)$,  there is a
    periodic orbit $\gamma^\sigma_E$ corresponding to a fixed point of
    the map 
    $$ \prod_{i=n}^1 \left(\Phg^{\sigma_i} \circ  \Phl^{++} \right)$$
    restricted to the energy surface $\cS_E$. (Product stands for composition of maps). 
  \end{enumerate}
\end{thm}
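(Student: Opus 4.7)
The plan is to encode each claimed periodic orbit as a fixed point of the prescribed composition of local and global Poincar\'e maps, restricted to the two-dimensional energy slice $\Sigma\cap \cS_E$, and to detect the fixed point via a Conley--McGehee isolating block. The underlying mechanism is a pronounced asymmetry between the two types of maps: Shilnikov's analysis of the local maps $\Phl^{\pm\pm}$ produces an extreme contraction along the strong-stable direction, collapsing images onto a thin tube along the strong-unstable direction, while each global map $\Phg^{\pm}$ is a fixed smooth diffeomorphism. Hypotheses A4a, A4b (respectively A4$'$) provide the two pieces of transversality that force the composition to have the local form of a linear saddle in suitable coordinates on $\Sigma^s_{\pm}\cap \cS_E$. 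A small rectangle around the candidate fixed point, aligned to these strong directions, becomes an isolating block whose Conley index is that of a hyperbolic fixed point, and the periodic orbit is the resulting invariant set.

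First I would run the local Shilnikov boundary-value analysis. Using a sufficiently smooth linearization (or finite-order normal form) near $(0,0)$, available from A2 in the non-resonant range, the boundary-value problem with admissible entry data on $\Sigma^s_{\pm}$ and exit data on $\Sigma^u_{\pm}$ admits a unique connecting orbit segment; in linearized coordinates the transverse output variables are damped by a factor of order $(|u_1|/\dt)^{\lb_2/\lb_1}$. Since $\lb_2>\lb_1$, the local maps are exponentially strongly contracting onto thin neighborhoods of $W^u\cap \Sigma^u_{\pm}$. Restricting to $\cS_E$ also selects which local maps are defined: $\Phl^{++},\Phl^{--}$ for $E\ge 0$ and $\Phl^{-+},\Phl^{+-}$ for $E\le 0$, which matches the three sub-statements of part (1) of the theorem.

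Next, for the simple-loop orbit $\gm_E^+$ with $E>0$, take a small rectangle $R\subset \Sigma^s_+\cap \cS_E$ centered near $p^+$ and aligned to $T^{ss}(p^+)$ and to the image $D\Phg^+(q^+)\,T^{uu}(q^+)$, which are transverse on $T\cS_0$ by A4a. By the Shilnikov step, $\Phl^{++}(R)$ collapses to a thin arc near $q^+$ along $T^{uu}(q^+)$ inside $\Sigma^u_+\cap\cS_E$; applying $\Phg^+$ then lays this arc transversally across $R$ by A4a together with A4b. Consequently $F_E:=\Phg^+\circ\Phl^{++}$ has the geometry of a linear saddle on $R$: the ``stable'' pair of sides maps into $R$ while the ``unstable'' pair maps out, so $R$ is an isolating block with the nontrivial Conley index of a hyperbolic fixed point, producing $\gm_E^+$. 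The orbit $\gm_E^-$ is obtained by the symmetric argument with $+\leftrightarrow -$, and $\gm_E^c$ ($E<0$) by applying the same argument to the four-fold composition $\Phg^-\circ\Phl^{+-}\circ\Phg^+\circ\Phl^{-+}$, each factor contributing one Shilnikov contraction and being re-positioned transversally by A4a, A4b.

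In the non-simple case, A3$'$ places both homoclinics on the same branch ($s_1>0$, $u_1>0$), so every local factor is $\Phl^{++}$. For each word $\sg\in\{1,2\}^n$ the composition $F_\sg=\prod_{i=n}^{1}\bigl(\Phg^{\sg_i}\circ\Phl^{++}\bigr)$ is a concatenation of factors each of which has the same strong-contraction-plus-transverse-repositioning structure, with A4$'$ supplying A4a and A4b for both $\gm^1$ and $\gm^2$. An isolating block at a suitably chosen base point is therefore mapped into itself by the $n$-fold composition with the index of a hyperbolic fixed point, yielding the orbit $\gm_E^\sg$, unique up to cyclic relabelling of $\sg$. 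The main technical obstacle throughout is obtaining a $C^1$-uniform Shilnikov estimate as $E\to 0$: the transit time $T\sim \lb_1^{-1}\log(1/|u_1|)$ diverges and a naive linearization loses derivative control. This is handled by Shilnikov's original technique of realizing the orbit segment itself as a fixed point in a function space equipped with exponentially weighted norms, which delivers $C^1$-dependence on the boundary data and the sharp contraction rate required by the isolating-block argument; this is precisely the role of the references to Shil'nikov, Turaev, and Bolotin--Rabinowitz.
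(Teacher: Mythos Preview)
Your proposal is correct and follows essentially the same approach as the paper: Shil'nikov's boundary-value analysis for the local maps, combined with the transversality hypotheses A4a--A4b to produce Conley--McGehee isolating blocks (with strong stable/unstable cone conditions) on the two-dimensional energy sections, whose invariant sets are the desired fixed points. The paper's only organizational difference is that it parametrizes first by the Shil'nikov transit time $T$ rather than by $E$---building rectangles $R^{\pm\pm}(T)\subset\Sigma^{s,E(T)}_\pm$ from stable/unstable curves and obtaining unique fixed points $p^\pm(T),\,p^c(T)$---and then proves separately that $E(p^\pm(T))$ is monotone in $T$ in order to reparametrize by energy.
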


\begin{figure}[t]
  \centering
  \includegraphics[width=5in]{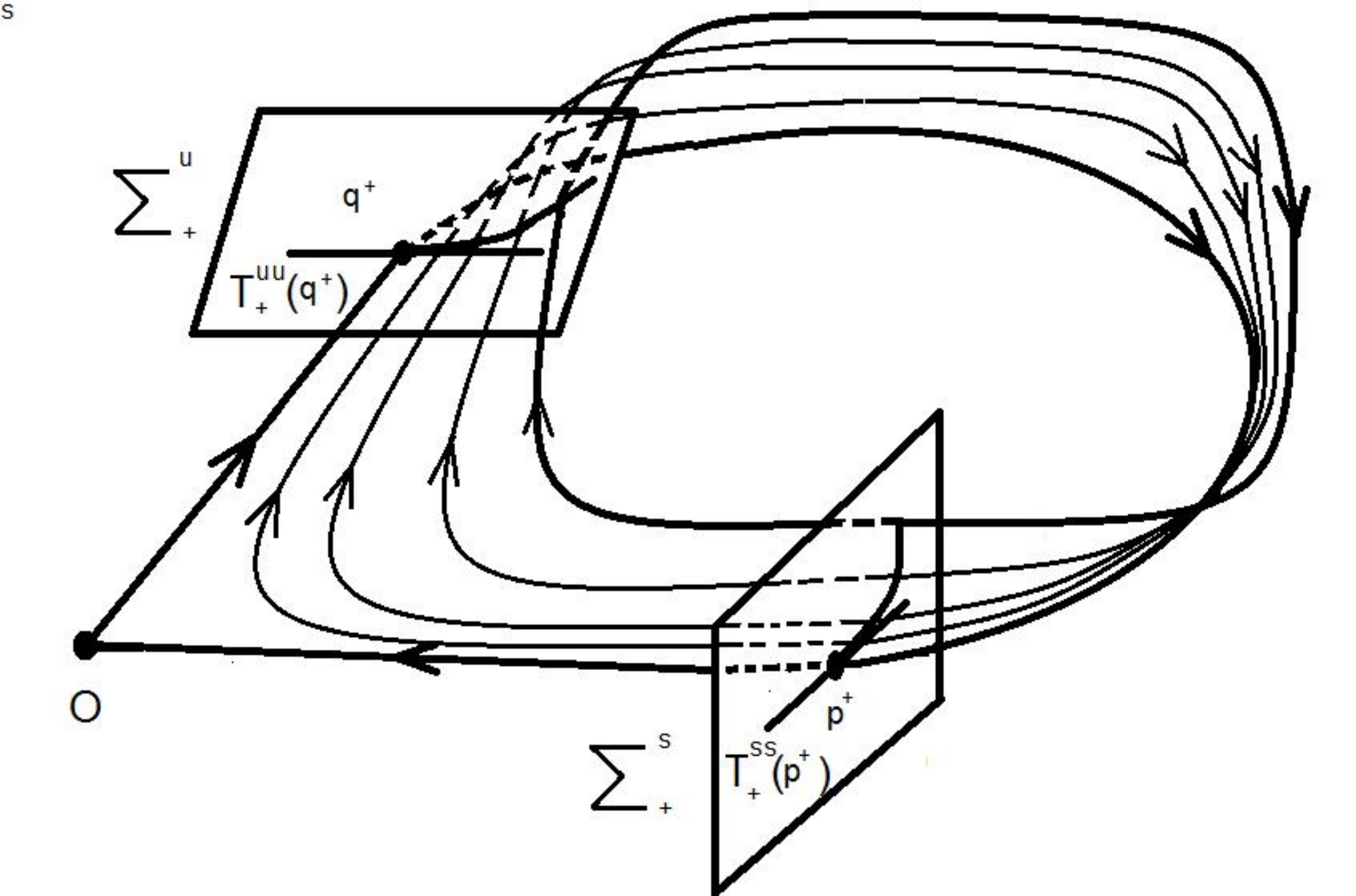}
  %,scale=0.5in
  %Z changed the figure caption. 
  \caption{Periodic orbits shadowing $\gamma^+$}
  \label{fig:return-map}
\end{figure}

The the periodic orbits $\gamma^+_E$ are depicted in Figure~\ref{fig:return-map}.

\begin{thm} \label{NHIC-mechanical} In the case of simple loop, assume
  that  A1-A4 are satisfied with $\gamma^+=\gamma^{h,+}_0$ and
  $\gamma^-=\gamma^{h,-}_0$. For this choice of $\gamma^+$ and
  $\gamma^-$, let $\gamma^+_E$, $\gamma^c_E$ and $\gamma^-_E$ be the
  periodic orbits obtained from part 1 of Theorem~\ref{periodic} .
    \[
  \sM_{h}^{E_0}=\bigcup_{0<E\le E_0} \gamma^+_E \cup \gamma \cup
  \bigcup_{-E_0 \le E < 0}\gamma^c_E \cup \gamma^- \cup \bigcup_{0<E
    \le E_0} \gamma^-_E
  \]
  is a $C^1$ smooth normally hyperbolic invariant manifold with
  boundaries $\gamma^+_{E_0}$, $\gamma^c_{E_0}$ and $\gamma^-_{E_0}$.

In the case of non-simple loop, assume that A1, A2, A3\,$'$ and A4$'$ are
satisfied with $\gamma^1 = \gamma^{h_1}_0$ and $\gamma^2 =
\gamma^{h_2}_0$.  
%Z, changed the notation for the shadowing orbit. I want to reserve the use of \gamma_E^h for the minimal geodesic and use \gamma_E^\sigma for the shadowing orbit. At the moment, we haven't proved they are identical. 
Let $\gamma^\sigma_E$ denote the periodic
orbits obtained from applying part 2 of Theorem~\ref{periodic} to the sequence $\sigma$ determined by Lemma~\ref{homotopy}. We
have that for any $e>0$, the set
\[
\sM_{h}^{e,E_0}=\cup_{e \le E \le E_0} \gm^\sigma_E
\]
is a $C^1$ smooth normally hyperbolic invariant manifold.
\end{thm}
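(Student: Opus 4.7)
The plan is to combine the periodic-orbit branches supplied by Theorem~\ref{periodic} into a $C^1$ two-dimensional invariant surface and then verify normal hyperbolicity through a Conley--McGehee isolating block argument. Away from the singular energy $E=0$ the first step is straightforward: each periodic orbit $\gamma^+_E$ (and similarly $\gamma^-_E$ and $\gamma^c_E$) is the fixed point of a composition like $\Phg^+\circ\Phl^{++}$ restricted to $\cS_E$, and assumptions A4a--A4b guarantee that the linearization of the fixed-point equation along $TS_0$ is invertible at that fixed point. The implicit function theorem then gives $C^1$ dependence of the fixed point on $E$, and flowing it out produces a $C^1$ family of circles filling a $C^1$ surface on each of the open intervals $(0,E_0]$, $[-E_0,0)$, and (in the non-simple case) $[e,E_0]$.

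The main obstacle, and the heart of the argument, is the gluing at $E=0$ in the simple case. As $E\to 0^+$ the period of $\gamma^+_E$ diverges like $\lambda_1^{-1}|\log E|$ and the derivative of $\Phl^{++}$ along the shadowing orbit blows up, so the implicit function theorem as stated fails to give $C^1$ control up to $E=0$. I would circumvent this by recasting the local passage as a Shil'nikov boundary value problem: fix a finitely smooth normal form in $B_r$ and parametrize each near-saddle arc by its entry point on $\Sigma^s_+$ together with its exit point on $\Sigma^u_+$, so that the passage time becomes a dependent variable. In these Shil'nikov coordinates the derivatives of the composed return map admit uniform bounds in $E$, hence the $C^1$ family of fixed points extends across $E=0$ and its limit accumulates onto $\gamma^+\cup\{(0,0)\}$. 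Since assumption A3 forces $\gamma^+$ and $\gamma^-$ to be tangent to $\mathrm{span}(u_1,s_1)$ at the origin, all three branches ($\gamma^+_E$, $\gamma^-_E$ for $E>0$ and $\gamma^c_E$ for $E<0$) attach to the homoclinic closures with the same two-dimensional tangent plane $\mathrm{span}(u_1,s_1)$ at $(0,0)$, yielding a genuine $C^1$ manifold structure through the fixed point.

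To upgrade $C^1$ invariance to normal hyperbolicity, I would apply the Conley--McGehee isolating block technique advertised in the abstract. Construct a closed neighborhood $B$ of $\sM_h^{E_0}$ whose boundary splits into transverse entry and exit pieces, arranged so that $\sM_h^{E_0}$ is precisely the maximal invariant subset of $B$. The spectral gap $\lambda_2>\lambda_1$ from assumption A2 then furnishes a dominated splitting $E^{ss}\oplus T\sM_h^{E_0}\oplus E^{uu}$ in which the normal rates $\pm\lambda_2$ strictly dominate the tangential rates (which are bounded by $\lambda_1$ throughout the cylinder), which is exactly the defining inequality of normal hyperbolicity. The non-simple case is handled by the same implicit-function-theorem plus isolating-block argument applied to the $n$-fold composition $\prod_{i=n}^{1}(\Phg^{\sigma_i}\circ\Phl^{++})$, where $\sigma$ is the sequence supplied by Lemma~\ref{homotopy}; since only energies $E\ge e>0$ are in play, the delicate Shil'nikov attachment step is unnecessary there.
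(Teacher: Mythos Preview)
Your overall outline is sound, you correctly isolate the hard point at $E=0$, and the non-simple case is indeed routine once $e>0$. Two steps, however, diverge from the paper in ways that leave real gaps.

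First, the sentence ``in these Shil'nikov coordinates the derivatives of the composed return map admit uniform bounds in $E$'' is an assertion, not an argument, and the paper does not attempt anything of the sort. It never bounds derivatives of the return map uniformly as $E\to 0$. Instead it observes that the one-parameter family of fixed points $\{p^+(T)\}_{T\ge T_0}$ is itself a forward- and backward-invariant set for the \emph{unrestricted} return map $\Phg^+\circ\Phl^{++}$, and that this map preserves both the strong-unstable and the strong-stable cone fields (Lemma~\ref{local-global-cone}). Proposition~\ref{Lipschitz} then forces the curve of fixed points to be a Lipschitz graph over $u_1$ with a uniform constant; this is Proposition~\ref{monotone}. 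The tangent direction at the section points $p^\pm$, $q^\pm$ is pinned down by combining Theorem~\ref{tangency} (which confines the curve to $\{s_1=u_2=0\}$, respectively $\{u_1=s_2=0\}$) with assumption A4b, whose role is precisely to single out the one-dimensional subspace $L(p^\pm)=D\Phg^\pm\{s_2=u_1=0\}\cap\{s_1=u_2=0\}$. Smoothness of $\mM$ at the origin itself comes again from Theorem~\ref{tangency}: every point of $\mM\cap B_r$ lies in $\{|s_2|\le C|s_1|^\alpha,\ |u_2|\le C|u_1|^\alpha\}$, so the tangent plane converges to $\{s_2=u_2=0\}$. This cone-field route sidesteps the blowup of $D\Phl^{++}$ entirely; your reparametrization idea may be salvageable, but as written it is not a proof.

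Second, the Conley--McGehee isolating block is not the tool that delivers normal hyperbolicity here. In the paper it appears only to produce the fixed points $p^\pm(T)$, $p^c(T)$ (Proposition~\ref{fixedpt}); normal hyperbolicity of $\mM$ is established separately in Lemma~\ref{mM-cone} and Corollary~\ref{nhm} via the invariant strong cone fields. Your claim that the tangential rates are ``bounded by $\lambda_1$ throughout the cylinder'' is exactly what needs proof, and it does not follow from A2 alone, since orbits on $\mM$ pass arbitrarily close to the saddle and spend unbounded time there. The paper deduces it from the fact (already obtained above) that $\mM$ is a Lipschitz graph over $(s_1,u_1)$: the $(s_2,u_2)$-components of any tangent vector are then uniformly dominated by the $(s_1,u_1)$-components, and the normal form gives $\|D\phi_t|_{T\mM}\|\le Ce^{(\lambda_1+\epsilon)|t|}$.
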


\begin{rmk} \label{kissing-cylinders} Due to hyperbolicity the
  cylinder $\sM_h^{E_0}$ is $C^\al$ for any $0<\al<\lb_2/\lb_1$.

If $h_1$ and $h_2$ corresponds to  simple loops, then the
corresponding invariant manifolds $\sM_{h_1}^{E_0}$ and
$\sM_{h_2}^{E_0}$ have a tangency along a two
dimensional plane at the origin. One can say that we have
``kissing manifolds'', see Figure \ref{fig:kissing}.
\end{rmk}

%Z, added remark on geometric vs variational part. 
\begin{rmk}
  In the simple loop case, we expect the shadowing orbits $\gamma^\pm_E$, for $0 \le E \le E_0$ to coincide with the minimal geodesics $\gamma^{\pm h}_E$. In the non-simple case, $\gamma^\sigma_E$ should coincide with $\gamma_E^h$ for $0 \le E \le E_0$ (by Lemma~\ref{homotopy}, $\sigma$ is uniquely determined by $h$). The proof is not included in this paper, as we only deal with the geometrical part of the diffusion. 
\end{rmk}

\begin{cor} \label{existence-NHIC}
The system $H_\eps$ has a normally hyperbolic
%K, invariance is not true, also we need to restrict the domain
manifold $\sM_{h,\eps}^{E_0}$ (resp. $\sM_{h,\eps}^{e,E_0}$)
which is weakly invariant, i.e. the Hamiltonian vector field
of $H_\eps$ is tangent to $\sM_{h,\eps}^{E_0}$
(resp. $\sM_{h,\eps}^{e,E_0}$). Moreover, the intersection of
$\sM_{h,\eps}^{E_0}$ (resp. $\sM_{h,\eps}^{e,E_0}$) with
the regions  $\{-E_0\le H^s \le E_0\}\times \T$
(resp. $\{e\le H^s \le E_0\}\times \T$) is a
%KK, shall we add
$C^1$-graph over
$\sM_{h}^{E_0}$ (resp. $\sM_{h}^{e,E_0}$).
\end{cor}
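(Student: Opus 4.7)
The plan is to deduce the corollary from Theorem \ref{NHIC-mechanical} by applying the standard persistence theorem for normally hyperbolic invariant manifolds with boundary to the slow normal form of $H_\eps$ in an $O(\sqrt{\eps})$-neighbourhood of $p_0$.

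First I would restrict to the $\sqrt{\eps}$-neighbourhood of $p_0$ and use the canonical coordinate change together with the action rescaling from Section \ref{slow-fast} to bring $H_\eps$ to the form $c_0/\sqrt{\eps}+\sqrt{\eps}\,H^s(\theta^s,I^s)+O(\eps)$ on the extended phase space $(\theta^s,I^s,t)\in T^*\T^2\times\T$. The corresponding Hamiltonian vector field reads
\[
X_{H_\eps}=\sqrt{\eps}\,X_{H^s}+\partial_t+O(\eps).
\]
For the unperturbed field $\sqrt{\eps}\,X_{H^s}+\partial_t$ the product $\sM_h^{E_0}\times\T$ is a three-dimensional $C^1$ weakly invariant manifold by Theorem \ref{NHIC-mechanical}, with tangential Lyapunov exponents in $\{0,\pm\sqrt{\eps}\lambda_1\}$ (coming respectively from the $\T$-factor, the homoclinics, and the hyperbolic shadowing periodic orbits produced in Theorem \ref{periodic}) and normal Lyapunov exponents $\pm\sqrt{\eps}\lambda_2$ by A2. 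Since $\lambda_2/\lambda_1>1$ the $1$-normal hyperbolicity condition holds with a uniform spectral gap.

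Second, I would apply the Hirsch--Pugh--Shub/Fenichel persistence theorem for NHIMs with boundary in its weakly invariant formulation. Since the $O(\eps)$ perturbation of the vector field is small compared with the $O(\sqrt{\eps})$ normal rates, the theorem yields a $C^1$ three-dimensional manifold $\sM_{h,\eps}^{E_0}$ that is weakly invariant under $X_{H_\eps}$ and sits as a $C^1$-graph over $\sM_h^{E_0}\times\T$ in the normal bundle. Pulling back by the normal-form coordinate change gives the manifold in the original $(\theta,p,t)$-coordinates, lying in $\{-E_0\le H^s\le E_0\}\times\T$; the graph property is exactly the second assertion of the corollary. The non-simple-loop case $\sM_{h,\eps}^{e,E_0}$ is handled by the same argument with $\gamma^\sigma_E$ replacing the simple-loop shadowing periodic orbits and the lower energy cut-off $E\ge e$ keeping the manifold away from the singular origin.

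The main obstacle is to secure uniformity of normal hyperbolicity all along $\sM_h^{E_0}$, in particular at the hyperbolic fixed point $(0,0)$ through which the homoclinics pass, where the unperturbed manifold is only $C^\alpha$ for $\alpha<\lambda_2/\lambda_1$ (Remark \ref{kissing-cylinders}), and at the three boundary periodic orbits $\gamma^+_{E_0},\gamma^c_{E_0},\gamma^-_{E_0}$ whose hyperbolicity is produced through Shilnikov's boundary value problem. I would verify that the normal Lyapunov rates are bounded below by a positive constant uniformly in $E\in[-E_0,E_0]$, a uniformity that follows from the shadowing estimates used in Theorem \ref{periodic}; with this in hand the persistence step is routine and the corollary follows.
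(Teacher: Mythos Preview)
Your approach is essentially the same as the paper's: reduce to the slow normal form of Section~\ref{slow-fast}, observe that $\sM_h^{E_0}\times\T$ is normally hyperbolic for the unperturbed system (the paper packages the uniformity you worry about into Corollary~\ref{nhm}), and then invoke Hirsch--Pugh--Shub persistence. The only point the paper adds that you leave implicit is how to handle the boundary: rather than citing a ``weakly invariant formulation'' of the persistence theorem, the paper smooths the perturbation to zero near the boundary (referring to \cite{BKZ}, Proposition~B.3), so that the modified perturbation preserves the boundary and ordinary NHIM persistence applies; the resulting manifold is then only weakly invariant for the original flow.
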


Proof of Corollary \ref{existence-NHIC} is included in section~\ref{sec:gen-nhic}.

%Z, wrote a proof in section 6.3

 % follows from the following simple
% observation: The NHIC is $3$-dimensional.  It has one-dimensional
% stable and one-dimensional unstable direction. Consider a box
% neighborhood at each point on $\sM_{h,\eps}^{E_0}$
% (resp. $\sM_{h,\eps}^{e,E_0}$) formed by taking
% $\dt$-box in stable/unstable directions. Taking $\eps$ small
% we can make sure that the time-periodic system $H_\eps$ satisfies
% isolating block property.
% % (see Section \ref{NHIC-isolating-block} for details).
% %KK, add
% We shall use these arguments one more time in the Appendix.

%Z, this paragraph does not seem necessary. 
% As before we fix a resonance $\Gm=\Gm_k=\{ (\partial_p H_0,1) \cdot
% k=0 \}$, where $k\in (\Z^2 \setminus 0)\times \Z$. This defines a
% smooth curve in the space of frequencies $B^2$.

\section{Normal form near the hyperbolic fixed point}
\label{sec:system-normal-form}

In a neighborhood of the origin, there exists a a symplectic linear
change of coordinates under which the system has the normal form
$$ H(u_1, u_2, s_1, s_2) = \lambda_1 s_1 u_1 + \lambda_2 s_2 u_2 + O_3(s,u).$$
Here $s=(s_1,s_2)$, $u=(u_1, u_2)$, and $O_n(s,u)$ stands for a
function bounded by $C|(s,u)|^n$. According to our assumptions,
$\lambda_1 < \lambda_2$.

%K, check if you agree
The main result of this section is the following normal form
\begin{thm}\label{normal-form}
  There exists $k\in \NN$ depending only on $\lambda_2/\lambda_1$ such
  that if $H$ is $C^{k+1}$, the following hold. There exists
  neighborhood $U$ of the origin and a $C^2$ change of coordinates $\Phi$
  on $U$ such that $N_k=H \circ \Phi$
  has the form
%  There exist $k\in \NN$ depending only on $\lambda_2/\lambda_1$ and
%  a neighborhood $U$ of the origin such that if $H$ is $C^{k+1}$,
%  there exists and a $C^2$ change of coordinates $\Phi$
%  on $U$ with the property $N_k=H \circ \Phi$
  %K, I would like to add
  is a polynomial of degree $k$ of the form
    \begin{equation}\label{eq:norm-form2}
    % \begin{cases}
    \begin{bmatrix}
      \dot{s}_1 \\ \dot{s}_2 \\ \dot{u}_1 \\ \dot{u}_2
    \end{bmatrix} =
    \begin{bmatrix}
      -\partial_{u_1}N_k \\ -\partial_{u_2}N_k \\ \partial_{s_1}N_k
      \\ \partial_{s_2}N_k
    \end{bmatrix}
    =
    \begin{bmatrix}
      -\lambda_1 s_1 + F_1(s,u)  \\
      -\lambda_2 s_2 + F_2(s,u)  \\
     \ \ \lambda u_1 + G_1(s,u) \\
     \ \ \lambda u_2 + G_2(s,u)
    \end{bmatrix}
    % \end{cases},
  \end{equation}
  where
$$ F_1 = s_1O_1(s,u) + s_2O_1(s,u), \quad  F_2 = s_1^2O(1) + s_2O_1(s,u), $$
$$ G_1 =  u_1O_1(s,u) + u_2O_1(s,u), \quad G_2 =  u_1^2O(1) + u_2 O_1(s,u) .$$
%In particular, $N_1$
\end{thm}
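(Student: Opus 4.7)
The approach is a Hamiltonian Birkhoff (Poincaré--Dulac) normalization up to order $k$, followed by a finitely smooth Sternberg-type absorption of the remainder.

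Starting from the already diagonalized quadratic part $H_2=\lambda_1 s_1 u_1+\lambda_2 s_2 u_2$, I normalize degree by degree: for each $m=3,\ldots,k$, solve the homological equation $\{H_2,\chi_m\}=H_m-\overline{H_m}$, where $\overline{H_m}$ is the projection of the current degree-$m$ part onto resonant monomials. Since $\operatorname{ad}_{H_2}$ acts diagonally on $s^a u^b$ with eigenvalue $(b_1-a_1)\lambda_1+(b_2-a_2)\lambda_2$, every non-resonant monomial lies in its image and $\chi_m$ is given explicitly by dividing through. Composing the time-$1$ Hamiltonian flows $\Phi_m=\exp X_{\chi_m}$ yields a polynomial symplectomorphism $\Phi^{\mathrm{pol}}$ with $H\circ\Phi^{\mathrm{pol}}=N_k+R_{k+1}$, where $N_k$ is a polynomial of degree $\le k$ consisting only of resonant monomials and $R_{k+1}=O(|(s,u)|^{k+1})$ inherits the regularity of $H$.

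Next I verify the stated structure of $F_i$ and $G_i$. A monomial $s^a u^b$ is resonant iff $(b_1-a_1)\lambda_1+(b_2-a_2)\lambda_2=0$. The basic resonances are generated by $s_1 u_1$ and $s_2 u_2$; additional families appear only when $\lambda_2/\lambda_1=p/q\in\Q_+$ in lowest terms with $p>q\ge 1$, namely $s_1^{\ell p}u_2^{\ell q}$ and $u_1^{\ell p}s_2^{\ell q}$ (multiplied by powers of the basic generators) for integer $\ell\ge 1$. Since $p\ge 2$, the $s_1$- or $u_1$-degree of every such extra monomial is at least $2$ whenever positive. A direct inspection then shows that every term of $-\partial_{u_1}(N_k-H_2)$ is divisible by $s_1$ or $s_2$, every term of $-\partial_{u_2}(N_k-H_2)$ is divisible by $s_1^2$ or $s_2$, and symmetrically for $\partial_{s_1},\partial_{s_2}$, matching the stated form of $F_1,F_2,G_1,G_2$ exactly.

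Finally, to promote $N_k+R_{k+1}$ to exactly $N_k$, invoke a Hamiltonian Sternberg--Chen flattening theorem: for $k$ large enough depending on $\lambda_2/\lambda_1$, the smooth tail $R_{k+1}$ can be absorbed into an additional $C^2$ symplectic change $\Psi$ on a small neighborhood of $0$, provided $H\in C^{k+1}$; the admissible threshold $k=k(\lambda_2/\lambda_1)$ is dictated by Siegel-type estimates on how closely integer combinations of products of the $\lambda_i$ approach zero at high degrees. Setting $\Phi=\Phi^{\mathrm{pol}}\circ\Psi$ yields the required $C^2$ symplectic change with $H\circ\Phi=N_k$. The main obstacle is precisely this last step: producing a \emph{symplectic} (not merely smooth) flattening of the tail, realized by a finitely differentiable coordinate change, with an explicit and computable smoothness exponent $k(\lambda_2/\lambda_1)$; the preceding polynomial normalization is essentially algebraic once the homological equations are set up.
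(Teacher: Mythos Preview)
Your plan follows the paper's proof closely: polynomial symplectic normalization (the paper uses generating functions rather than Lie series, an inessential difference), analysis of the resonant monomials to pin down the structure of $F_i,G_i$, and then a finitely smooth conjugacy to kill the tail. The one substantive divergence is in that last step. You call for a \emph{symplectic} Sternberg--Chen flattening so that $\Phi$ is symplectic and $N_k=H\circ\Phi$ holds literally as an identity of Hamiltonians. The paper instead invokes the Belitskii--Samovol theorem, which conjugates \emph{vector fields}: any two germs at a hyperbolic fixed point with coinciding $k$-jets are $C^l$-conjugate once $k=k(l,\lambda)$ is large enough. This produces a $C^2$ but not necessarily symplectic $\Phi$, which suffices because the theorem only claims a $C^2$ change of coordinates; the expression ``$N_k=H\circ\Phi$'' in the statement should be read as ``$\Phi$ conjugates $X_H$ to $X_{N_k}$''. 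Your route would yield a genuinely stronger conclusion but requires citing or proving a harder (symplectic) finite-smoothness theorem; the paper's route is the path of least resistance. Two minor remarks: the paper also explicitly straightens $W^s$ and $W^u$ symplectically before normalizing, though your resonance count already forces $N_k|_{s=0}=N_k|_{u=0}=0$, so this is redundant at the polynomial level; and your description of the threshold $k(\lambda_2/\lambda_1)$ as arising from ``Siegel-type estimates'' is off---the spectrum here is real hyperbolic, there are no small divisors, and $k$ is determined combinatorially by the ratio $\lambda_2/\lambda_1$ via the standard Sternberg/Belitskii--Samovol count.
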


The proof consists of two steps: first, we do some preliminary normal
form and then apply a theorem of Belitskii-Samovol (See, for example \cite{IL99}).

Since $(0,0)$ is a hyperbolic fixed point, for sufficiently small
$r>0$, there exists stable manifold $W^s=\{(u=U(s), |s|\le r\}$ and
unstable manifold $W^u=\{s=S(u), |u|\le r\}$ containing the
origin. All points on $W^s$ converges to $(0,0)$ exponentially in
forward time, while all points on $W^u$ converges to $(0,0)$
exponentially in backward time. These manifolds are Lagrangian; as a
consequence, the change of coordinates $s'=s-S(u)$,
$u'=u-U(s')=u-U(s-S(u))$ is symplectic. Under the new coordinates, we
have that $W^s=\{u'=0\}$ and $W^u=\{s'=0\}$. We abuse notation and
keep using $(s,u)$ to denote the new coordinate system.

Under the new coordinate system, the Hamiltonian has the form
$$ H(s,u) = \lambda_1 s_1 u_1 + \lambda_2 s_2 u_2 + H_1(s,u), $$
where $H(s,u) = O_3(s,u)$ and $H_1(s,u)|_{s=0} =
H_1(s,u)|_{u=0}=0$. Let us denote $H_0=\lambda_1 s_1 u_1 + \lambda_2
s_2 u_2$. We now perform a further step of normalization.

We say an tuple $(\alpha, \beta) \in \NN^2 \times \NN^2$ is
%K, small editing
{\it resonant } if $ \sum_{i=1}^2 \lambda_i (\alpha_i-\beta_i) =0.$
Note that an $(\alpha, \beta)$ with $\alpha_i=\beta_i$ for $i=1,2$ is
always resonant. A monomial $u_1^{\alpha_1} u_2 ^{\alpha_2}
s_1^{\beta_1} s_2^{\beta_2}$ is resonant if $(\alpha,\beta)$ is
resonant. Otherwise, we call it {\it nonresonant}.  It is well
known that a Hamiltonian can always be transformed, via a formal
power series, to an Hamiltonian with only resonant terms.

\begin{prop}
  If $H$ is at least $C^{k+1}$, the there exists a
  $C^\infty-$symplectic change of coordinates $(s,u) = \Phi(s', u')$
  defined on a neighborhood of $(0,0)$ such that
  $$ H \circ \Phi' = N_k(s',u') + H_2(s',u'),$$
  where $N_k$ is a polynomial of degree $k$ consisting only of
  resonant terms and $H_2 = O_{k+1}(s',u')$.
\end{prop}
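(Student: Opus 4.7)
The plan is the standard Poincar\'e--Birkhoff normalization, done iteratively in degree. I would construct $\Phi$ as a composition $\Phi = \Phi_3 \circ \Phi_4 \circ \cdots \circ \Phi_k$, where each $\Phi_j$ is the time-one map of the Hamiltonian flow of a polynomial $\chi_j$ homogeneous of degree $j$, chosen so that $\Phi_j$ kills the non-resonant part of the current degree-$j$ term. Since each $\chi_j$ is a polynomial, every $\Phi_j$ is a $C^\infty$ symplectic diffeomorphism on a neighborhood of $0$, hence so is the finite composition $\Phi$.

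The algebraic engine is the computation
\[
\operatorname{ad}_{H_0}(u^\al s^\bt) := \{H_0, u^\al s^\bt\} = \Bigl[\,\sum_{i=1}^2 \lb_i(\al_i-\bt_i)\Bigr]\, u^\al s^\bt,
\]
so $\operatorname{ad}_{H_0}$ is diagonal on the monomial basis with non-zero eigenvalues exactly on non-resonant monomials. Consequently the cohomological equation $\{H_0, \chi_j\} = -R_j^{nr}$ is solvable monomial by monomial, where $R_j^{nr}$ is the non-resonant part of the current degree-$j$ homogeneous term $R_j$.

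For the induction, assume we have reached $H_{j-1} = N_{j-1} + R_j + O_{j+1}$ with $N_{j-1}$ a resonant polynomial of degree $\le j-1$. Expand via the Lie series
\[
H_{j-1}\circ \Phi_j = H_{j-1} + \{H_{j-1},\chi_j\} + \tfrac{1}{2}\{\{H_{j-1},\chi_j\},\chi_j\} + \cdots .
\]
Since $\chi_j$ is homogeneous of degree $j\ge 3$, $\{H_0,\chi_j\}$ is homogeneous of degree $j$ and cancels $R_j^{nr}$ by construction, while every other contribution in the series raises the total order to $\ge j+1$: brackets with the higher-order remainder of $H_{j-1}$ add at least $2$ to the degree, and iterated brackets with $\chi_j$ add at least $j-1 \ge 2$ each time. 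Hence $H_{j-1}\circ \Phi_j = N_{j-1} + R_j^{r} + O_{j+1}$, and the normal form up through degree $j-1$ is preserved. Iterating for $j=3,\dots,k$ gives $H\circ \Phi = N_k + H_2$, where $N_k$ consists only of resonant monomials of degree $\le k$; since $H$ is $C^{k+1}$ and $\Phi$ is $C^\infty$ tangent to the identity, Taylor's theorem shows $H_2 = O_{k+1}$.

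The main obstacle is really just the degree bookkeeping in the Lie series, namely verifying that $\Phi_j$ only modifies terms of order $\ge j+1$ (apart from the intended cancellation), so that earlier normalizations are not disturbed. Everything else is mechanical: the solvability of the cohomological equation on non-resonant monomials is automatic from the diagonal form of $\operatorname{ad}_{H_0}$, symplecticity is preserved because each step is a Hamiltonian flow, and the smoothness claim follows since we perform only finitely many ($k-2$) normalization steps with polynomial generators, so no small-divisor or summability issues arise.
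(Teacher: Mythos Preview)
Your proof is correct but uses a genuinely different device from the paper's. The paper constructs a single symplectic change of variables via a polynomial generating function
\[
G_k(s,u') = s_1 u_1' + s_2 u_2' + \sum_{3 \le i \le k+1}\ \sum_{(\alpha,\beta)\in S_i} g_{\alpha,\beta}\, s^\alpha (u')^\beta,
\]
with $s'=\partial_{u'}G_k$, $u=\partial_s G_k$, and simply cites Siegel--Moser for the fact that the coefficients $g_{\alpha,\beta}$ can be chosen (degree by degree) so that every nonresonant coefficient of $H\circ\Phi$ up to order $k$ vanishes. You instead run the Lie--Deprit scheme: $\Phi = \Phi_3\circ\cdots\circ\Phi_k$ with each $\Phi_j$ the time-one map of a homogeneous $\chi_j$ solving $\{H_0,\chi_j\}=-R_j^{nr}$, and you carry out the degree bookkeeping explicitly. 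What your route buys is transparency---the diagonalization of $\operatorname{ad}_{H_0}$ on monomials makes the solvability of the cohomological equation self-contained, and symplecticity is automatic since each step is a Hamiltonian flow. What the paper's route buys is brevity: one change of coordinates rather than $k-2$, at the price of deferring the algebra to a reference. Both yield a $C^\infty$ (indeed real-analytic) symplectic $\Phi$ tangent to the identity, and both give the $O_{k+1}$ remainder from the $C^{k+1}$ regularity of $H$ via Taylor's theorem.
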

\begin{proof}
  Let $S_k$ denote the set of all nonresonant indices $(\alpha, \beta)\in
  \NN^2\times \NN^2$ with $|\alpha| + |\beta|=k$. We define the change
  of coordinates by the generating function
  $$ G_k(s,u') = s_1 u_1' + s_2 u_2' + \sum_{3 \le i \le k+1}
  \sum_{(\alpha, \beta)\in S_i}g_{\alpha, \beta}s^\alpha(u')^\beta.$$
  The symplectic change of coordinates is defined by
  $s'=\partial_{u'}G_k$ and $u=\partial_{s}G_k$. Assume that
  $$ H\circ \Phi = \sum_{i \ge 2} \sum_{|\alpha|+|\beta|}h_{\alpha, \beta}(s')^\alpha (u')^\beta. $$
  We have that if $(\alpha, \beta)$ is nonresonant, there exists a
  unique $g_{\alpha, \beta}$ such that $h_{\alpha, \beta}=0$ (see
  \cite{SM95}, section 30, for example). By choosing $g_{\alpha,
    \beta}$ appropriately, we obtain the desired normal form.
 \end{proof}

We abuse notations by replacing $(s',u')$ with $(s,u)$. Using our assumption that
$0 < \lambda_1 < \lambda_2$, we have that all $(\alpha, \beta)$ with
%K, add
$\alpha \ne \bt$,  $\alpha_1=1$ and $\alpha_2=0$ are nonresonant,
and similarly, all $(\alpha, \beta)$ with $\alpha \ne \bt$,
$\beta_1=1$ and $\beta_2=0$ are nonresonant. Furthermore, by performing
the straightening of stable/unstable manifolds again if necessary,
we may assume that $N_k|_{s=0} = N_k|_{u=0} =0$. As a consequence,
the normal form $N_k$ must take the following form:

\begin{cor} \label{normal-hamiltonian-form}
The normal form $N_k$ satisfies
%Z, To be consistent with the use of O(), we should write the following. It looks a bit ugly though.
%K,
  $$ N_k = \lambda_1 s_1u_1 + \lambda_2 s_2u_2 + s_2O_1(u)O_1(s,u) +  s_1^2O_1(u) + u_2O_1(s)O_1(s,u) + u_1^2O_1(s)$$
%  and $N_k = \lambda_1 s_1u_1 + \lambda_2 s_2u_2 + O_3(s,u)$.
In particular, we have
$N_k = \lambda_1 s_1u_1 + \lambda_2 s_2u_2 + O_3(s,u)$.
\end{cor}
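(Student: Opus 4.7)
The plan is to enumerate every monomial $s^\beta u^\alpha$ that can survive in the normal form $N_k$, using the two constraints already recorded just above the corollary: the resonance identity $\lambda_1(\alpha_1-\beta_1)+\lambda_2(\alpha_2-\beta_2)=0$ together with the vanishing conditions $N_k|_{u=0}=N_k|_{s=0}=0$ (which force $|\alpha|,|\beta|\ge 1$ for every surviving monomial). Once the admissible index patterns are classified, I would check that each one slots into one of the four templates in the stated expression for $N_k$.

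First I would isolate the quadratic monomials. Because $0<\lambda_1<\lambda_2$, resonance at total degree two forces $\alpha=\beta\in\{(1,0),(0,1)\}$, producing exactly the linearised piece $\lambda_1 s_1u_1+\lambda_2 s_2u_2$.

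Next I would handle the higher-degree terms by a direct case analysis. For $\alpha\ne\beta$, the paragraph preceding the corollary already rules out $(\alpha_1,\alpha_2)=(1,0)$ and $(\beta_1,\beta_2)=(1,0)$. Going through the remaining possibilities yields the clean dichotomy: either $u_2\mid u^\alpha$ or $u_1^2\mid u^\alpha$, and simultaneously either $s_2\mid s^\beta$ or $s_1^2\mid s^\beta$. The same dichotomy also holds on the diagonal $\alpha=\beta$ with $|\alpha|\ge 2$: if $\alpha_1\ge 2$ the monomial is divisible by $s_1^2u_1^2$, while if $\alpha_1\le 1$ then $\alpha_2\ge 1$ and the monomial is divisible by $s_2u_2$.

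Combining the two dichotomies gives four sub-cases, and in each one I would extract the prescribed leading factor and package the remainder into $O_1$-factors: a monomial with $s_2\mid s^\beta$ fits the template $s_2\,O_1(u)\,O_1(s,u)$ (the $O_1(u)$ factor is supplied by either $u_2$ or one copy of $u_1$); a monomial with $s_1^2\mid s^\beta$ and $u_2\mid u^\alpha$ fits $u_2\,O_1(s)\,O_1(s,u)$; and a monomial with $s_1^2\mid s^\beta$ and $u_1^2\mid u^\alpha$ fits either $s_1^2\,O_1(u)$ or $u_1^2\,O_1(s)$. Adding these four buckets to the quadratic part reproduces the first identity in the corollary, and the closing identity $N_k=\lambda_1s_1u_1+\lambda_2s_2u_2+O_3(s,u)$ is immediate because each of the four templates already has total degree at least three. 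The whole proof is elementary bookkeeping on multi-indices in $\NN^2\times\NN^2$ compatible with a two-dimensional resonance; the only non-routine input is the straightening of $W^s$ and $W^u$ inside the normal-form coordinates, which was already invoked in the paragraph preceding the corollary, and which I would expect to be the one delicate point to justify carefully.
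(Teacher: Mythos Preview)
Your proposal is correct and follows precisely the approach the paper intends: the paper states the corollary as an immediate consequence of the preceding paragraph (nonresonance of the indices $(\alpha,\beta)$ with $\alpha=(1,0)$ or $\beta=(1,0)$ when $\alpha\ne\beta$, together with $N_k|_{s=0}=N_k|_{u=0}=0$) and leaves the monomial bookkeeping to the reader, which is exactly what you carry out. One small remark on the point you flag as delicate: the vanishing $N_k|_{s=0}=N_k|_{u=0}=0$ is in fact automatic for a resonant polynomial, since $\sum_i\lambda_i(\alpha_i-\beta_i)=0$ with $\alpha=0$ and $\lambda_i>0$ forces $\beta=0$ (and symmetrically), so the re-straightening the paper invokes is not actually needed for the corollary itself.
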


% \begin{proof}
%   Assume that $H_1(s,u) = \sum_{|i| + |j|\le 3} h_{ij} s^i u^j +
%   O_4(s,u)$, % where $i=(i_1, i_2)$, $j=(j_1, j_2)$ and $|i|=i_1 + i_2$. Define
%   $$ G(s,u) = \sum_{(i,j)\in {\mathcal G}} \frac{h_{ij}}{\lambda_1 (i_1 - j_1) + \lambda_2 (i_2 - j_2)}, $$
%   where ${\mathcal G}$ contains the indices $(i,j)$ with either
%   $|i|+|j| =
%   3$, % or $i_1=1$ , $i_2=0$ and $|i|+|j|\le k$. It is easy to verify that for any % $(i,j)\in{\mathcal G}$, $\lambda_1(i_1-j_1)+\lambda_2(i_2-j_2)\ne 0$.  Let % $\Phi_t$ be the time-$t$ map of the Hamiltonian flow generated by $G$. Note % that $G=O_3(s,u)$, hence $\nabla G = O_2(s,u)$. Using Gronwall's inequality we % have that if $|(s,u)|\le r$,
% $$ |\Phi_t(s,u)|\le e^{Cr t}|(s,u)|$$
% for some constant $C$. In particular, for $r$ sufficiently small,
% $\Phi_t$ is % a map from $B_{r/2}(0,0)$ to $B_r(0,0)$.

% Denote $\Phi= \Phi_1$, we have
% $$ H \circ \Phi = H_0 + H_1 + \{G, H_0\} + \int_0^1\{G, (1-t)\{G, H_0\} + H_1\}\circ \Phi_tdt.  $$
% Note that $\{G, H_0\} + H_1 = O_4(s,u)$ and $\{G, (1-t)\{G, H_0\}
% + % H_1\}=O_4(s,u)$, we have $H_2 := H_1 + \{G, H_0\} + \int_0^1\{G, (1-t)\{G, % H_0\} =O_4(s,u)$.

% For the second claim, note that $G|_{s=0}=G|_{u=0}=0$. We may
% directly
% verify % that $\{G,H_0\}$ and $\{G, H_1\}$ still preserve this property and that % $\Phi_t$ leaves $\{s=0\}$ and $\{u=0\}$ invariant.
% \end{proof}

%KK, do we need this?
Under the normal form the equations of motion is
\begin{equation}\label{eq:norm-form1}
  \begin{cases}
    \dot{s} = -\partial_uN_k + O_k(s,u) \\
    \dot{u} = \partial_sN_k + O_k(s,u)
  \end{cases}.
\end{equation}
As the linearization of these equations is hyperbolic, for
sufficiently large $r$ it is possible to kill the small remainder with
a finitely smooth change of coordinates.

Theorem \ref{normal-form} is a direct consequence of the following theorem:
\begin{thm}[Belitskii-Samovol](See \cite{IL99}, Chapter 6, Theorem
  1.6) For any $l\in \NN$ and $\lambda\in \C^n$ with $Re \lambda_i \ne
  0$, there exists an integer $k=k(l,\lambda)$ such that the following
  hold. Suppose two germs of vector fields at a hyperbolic fixed point
  with the spectrum of linearization equal to $\lambda$, and their
  jets of order $k$ coincide at the fixed point. Then the two vector
  fields are $C^l-$conjugate.
\end{thm}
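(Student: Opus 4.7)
The plan is to produce the conjugating $C^l$ diffeomorphism $h$ by a contraction argument applied to the conjugacy equation. Let $A = D_0 X = D_0 Y$ be the common linearization (this is where the assumption on the spectrum enters), and write $h(x) = x + \phi(x)$ for an unknown $\phi$ vanishing to high order at the origin. The requirement $h_* X = Y$ is equivalent, after expanding in $\phi$, to
\[
 D\phi(x)\, X(x) - DX(x)\, \phi(x) \;=\; (Y-X)(x + \phi(x)) \;+\; \text{terms at least quadratic in }\phi.
\]
Modulo the nonlinear $\phi$-terms and the non-linear part of $X$, this is the homological equation $\mathcal L \phi = R$, where $\mathcal L \phi(x) := D\phi(x)\, Ax - A\, \phi(x)$ and $R = Y - X = O(|x|^{k+1})$ by hypothesis on the $k$-jets. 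The strategy is to invert $\mathcal L$ on a scale of weighted $C^l$ spaces, and then treat the nonlinear residue by a contraction on a small ball.

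The central step is the inversion of $\mathcal L$. Since $A$ is hyperbolic, split $\R^n = E^s \oplus E^u$ and block-diagonalize $A = A^s \oplus A^u$ with $\mathrm{spec}(A^s) \subset \{\mathrm{Re}\, z < 0\}$ and $\mathrm{spec}(A^u) \subset \{\mathrm{Re}\, z > 0\}$. Writing $R = R^s \oplus R^u$ accordingly and using that the linear flow $e^{tA}$ decomposes, one has the explicit solutions
\[
 \phi^s(x) \;=\; -\!\int_0^{\infty} e^{-tA^s}\, R^s(e^{tA}x)\, dt, \qquad \phi^u(x) \;=\; \int_{-\infty}^{0} e^{-tA^u}\, R^u(e^{tA}x)\, dt,
\]
each valid whenever $R$ vanishes to sufficiently high order. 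Differentiating $\ell \le l$ times under the integral produces $\|De^{tA}\|^{\ell} \le C e^{\ell\, \mu\, |t|}$ from the flow, competing against the integrand bound $|e^{tA}x|^{k+1} \le C e^{-(k+1)\, \nu\, |t|}|x|^{k+1}$ coming from the order-$(k{+}1)$ vanishing of $R$, where $\mu := \max_i |\mathrm{Re}\, \lambda_i|$ and $\nu := \min_i |\mathrm{Re}\, \lambda_i|$. Convergence forces $k+1 > \ell\, \mu/\nu$; taking $\ell = l$ and absorbing the $s/u$ cross-terms into a universal constant fixes the required integer $k = k(l,\lambda)$. On a sufficiently small neighbourhood $B_r$ of $0$ this furnishes a bounded operator $\mathcal L^{-1}$ on $C^l$ maps vanishing to order $k+1-l$, with arbitrarily small operator norm as $r \downarrow 0$.

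With $\mathcal L^{-1}$ in hand, the full equation becomes $\phi = \mathcal L^{-1}\bigl(R + \mathcal N(\phi)\bigr)$, where $\mathcal N(\phi)$ collects the terms at least quadratic in $(\phi, D\phi)$ together with the corrections from replacing the linearization $A$ by the full nonlinear $DX, DY$ (these carry an extra power of $|x|$ and so are strictly better behaved than the leading terms $\mathcal L$ controls). A standard contraction argument on a small ball of $C^l(B_r)$ produces a unique $\phi$, and $h = \mathrm{id} + \phi$, being $C^l$-close to the identity, is a $C^l$ local diffeomorphism conjugating $X$ to $Y$. The main obstacle is the quantitative derivative bookkeeping in the inversion of $\mathcal L$: one has to keep track of how many $x$-derivatives the integral representations can absorb as a function of the vanishing order of $R$, and show that the non-diagonal part of $X$ at the origin really does enter $\mathcal N$ rather than spoiling $\mathcal L$. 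A preliminary Poincar\'e--Dulac normal-form reduction (entirely analogous to the one already used in the preceding proposition) puts $X$ in resonant polynomial form plus a flat remainder, after which this dichotomy is clean and the contraction becomes routine.
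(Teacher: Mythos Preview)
The paper does not prove this theorem; it is quoted from \cite{IL99} as a black box, so there is no in-paper proof to compare against. Your proposal, however, contains a genuine gap.

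Your integral formulas for $\phi^s$ and $\phi^u$ do not converge when $A$ is a saddle, i.e.\ has eigenvalues in both half-planes --- and that is precisely the case at hand (the spectrum is $\pm\lambda_1,\pm\lambda_2$). In
\[
\phi^s(x) \;=\; -\int_0^{\infty} e^{-tA^s}\, R^s(e^{tA}x)\, dt,
\]
the argument $e^{tA}x$ has its \emph{unstable} component growing like $e^{\mu t}$, so $|R^s(e^{tA}x)|$ grows like $e^{(k+1)\mu t}$ and the integral diverges for every $k$. The bound you wrote, $|e^{tA}x|^{k+1} \le Ce^{-(k+1)\nu|t|}|x|^{k+1}$, is false for saddles; it holds only when all eigenvalues lie in a single half-plane (the purely contracting or expanding case, which is Sternberg's easier theorem). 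The Poincar\'e--Dulac step you invoke at the end does not rescue this: after normal-form reduction you still have to conjugate away an order-$(k{+}1)$ remainder, and the homological operator for a saddle linearization cannot be inverted by integrating along the linear flow in either time direction.

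The actual proof in \cite{IL99} is substantially more involved. After the Poincar\'e--Dulac reduction one does not attempt to invert $\mathcal L$ by a single integral. Instead one exploits the hyperbolic structure to build a compatible system of $C^l$ invariant foliations (equivalently, in Belitskii's formulation, a filtration of the jet space into subspaces on which the induced linear action is either a contraction or an expansion) and assembles the conjugacy from these pieces. The integer $k(l,\lambda)$ does arise from the spectral ratio $\mu/\nu$, as you guessed, but it enters through counting how many derivatives are lost when passing between the stable and unstable foliations, not through convergence of an integral of the type you wrote.
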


\section{Behavior of a family of orbits passing near $0$
and Shil'nikov boundary value problem}
The main result of this section is the following
\begin{thm}\label{tangency}
  Let $(s^T,u^T)$ be a family of orbits satisfying $s^T(0)\to s^{in}$
  as $T\to \infty$
  %KK, uniformity condition
  with $s_1^T=\dt$ and $u^T(T)\to u^{out}$ as $T\to \infty$
  with $s_1^T=\dt$ with $|s^T|, |u^T|\le 2 \delta$, where $\delta$
  is small enough. Then there exists $T_0,\ C>0$ and $\alpha>1$
  such that for each $T>T_0$ and all $0\le t \le T$ we have
  $$
  |s_2^T(t)|\le C |s_1^T(t)|^\alpha ,
  \quad |u_2^T(t)| \le C   |u_1^T(t)|^\alpha.
  $$
  In particular, the curve $\{(s_1^T(T), s_2^T(T))\}_{T\ge T_0}
  \Sigma^u_+=\{s_1^T(0)=\dt\}$ is tangent to the $s_1$--axis
  at $T=\infty$ and
  $\{(u_1^T(0), u_2^T(0))\}\subset \Sigma^s_+=\{s_1^T(0)=\dt\}$
  is tangent to the $u_1$--axis at $T=\infty$.
\end{thm}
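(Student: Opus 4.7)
The plan is to combine the finitely-smooth normal form of Theorem~\ref{normal-form} with the Shil'nikov technique of recasting the passage problem as a two-point integral boundary value problem, and then run a component-wise Gr\"onwall bootstrap that exploits the finer structural identity $F_2 = s_1^2 O(1) + s_2 O_1(s,u)$. Working in normal-form coordinates so that equations \eqref{eq:norm-form2} hold, the orbit satisfies the integral system
\[
s_i(t) = e^{-\lambda_i t}\, s_i(0) + \int_0^t e^{-\lambda_i(t-\tau)} F_i(s,u)(\tau)\,d\tau,
\]
\[
u_i(t) = e^{\lambda_i(t-T)}\, u_i(T) - \int_t^T e^{\lambda_i(t-\tau)} G_i(s,u)(\tau)\,d\tau, \qquad i=1,2.
\]
The hypotheses $s^T(0)\to s^{in}$, $u^T(T)\to u^{out}$ together with $|s^T|,|u^T|\le 2\delta$ ensure that the boundary data are uniformly bounded and that the orbit remains in the chart where the normal form is valid.

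The first step is to obtain two-sided exponential bounds on the slow components. Because $F_1 = s_1 O_1(s,u) + s_2 O_1(s,u)$ is divisible by $s$, Gr\"onwall applied to the $s_1$ integral equation yields
\[
c\,\delta\, e^{-(\lambda_1 + C\delta)t} \le |s_1(t)| \le C\,\delta\, e^{-(\lambda_1 - C\delta)t}, \qquad 0 \le t \le T,
\]
provided $\delta$ is small enough that the nonlinear correction cannot flip the sign of $s_1$ starting from $s_1(0) = \delta$. A symmetric backward-in-time estimate produces matching bounds for $|u_1(t)|$ centred on the rate $\lambda_1(T-t)$.

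The second step uses the finer structural fact $F_2 = s_1^2 O(1) + s_2 O_1(s,u)$ from Corollary~\ref{normal-hamiltonian-form}. Inserted into the $s_2$ integral equation and using the a priori bound on $|s_1|$, one obtains
\[
|s_2(t)| \le e^{-\lambda_2 t}|s_2(0)| + C\!\int_0^t e^{-\lambda_2(t-\tau)}\bigl(\delta^2 e^{-2(\lambda_1 - C\delta)\tau} + \delta |s_2(\tau)|\bigr)\,d\tau,
\]
and a direct Gr\"onwall bound yields $|s_2(t)| \le C\, e^{-\mu t}$ with $\mu = \min(\lambda_2,\, 2\lambda_1) - O(\delta)$. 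By symmetry $|u_2(t)| \le C\, e^{-\mu(T-t)}$. Combining the upper bound on $s_2$ with the lower bound on $s_1$ gives
\[
\frac{|s_2(t)|}{|s_1(t)|^\alpha} \le C\,\delta^{-\alpha}\, e^{-(\mu - \alpha(\lambda_1 + C\delta))t},
\]
which is uniformly bounded in $(t,T)$ as soon as $\alpha < \mu/(\lambda_1 + C\delta)$. Since assumption A2 forces $\mu/\lambda_1 = \min(\lambda_2/\lambda_1,\, 2) - O(\delta) > 1$ after shrinking $\delta$, we can fix some $\alpha > 1$ obeying this inequality. The argument for $u$ runs verbatim. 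Setting $t=T$ and $t=0$ respectively recovers $|s_2^T(T)| \le C|s_1^T(T)|^\alpha$ and $|u_2^T(0)| \le C|u_1^T(0)|^\alpha$, which is exactly the claimed higher-order tangency of the two parametrized curves to the $s_1$- and $u_1$-axes as $T\to\infty$.

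The main obstacle is securing the \emph{lower} bound on $|s_1(t)|$ (and its $u_1$ analogue) rigorously, since without it the quotient $|s_2|/|s_1|^\alpha$ could a priori blow up at intermediate times even though both components are small. Making it rigorous rests on the normal-form identity $N_k|_{s=0}=0$ of Corollary~\ref{normal-hamiltonian-form}, which preserves the sign of $s_1$ along the flow, together with careful tracking that the Gr\"onwall rate distortions $O(\delta)$ do not close the gap $\mu - \alpha \lambda_1 > 0$. A secondary technical point is to verify that the Shil'nikov integral map is a genuine contraction on the function space of bounded orbits with the prescribed boundary data, so that its fixed point is unique and the estimates above really apply to the given family $(s^T,u^T)$.
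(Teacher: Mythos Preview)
Your overall strategy coincides with the paper's: the paper packages exactly your integral-equation bootstrap into Proposition~\ref{bvp}, obtaining the upper bound $|s_2^T(t)|\le 2\delta\,e^{-(\lambda_2'-2\epsilon)t}$ with $\lambda_2'=\min(\lambda_2,2\lambda_1)$ (your $\mu$) and the lower bound $|s_1^T(t)|\ge \tfrac12|s_1^{in}|\,e^{-(\lambda_1+\epsilon)t}$, and then simply takes the quotient with $\alpha=(\lambda_2'-2\epsilon)/(\lambda_1+\epsilon)$.

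There is, however, a genuine ordering issue in your write-up of the lower bound on $|s_1|$. You claim it follows from Gr\"onwall because $F_1$ is ``divisible by $s$'', and later that sign preservation of $s_1$ ``rests on the normal-form identity $N_k|_{s=0}=0$''. Neither is quite right: $N_k|_{s=0}=0$ makes $\{s=0\}$ invariant, but \emph{not} $\{s_1=0\}$, since $F_1$ contains a term $s_2\,O_1(s,u)$ that is not proportional to $s_1$. A naive Gr\"onwall on $s_1$ alone therefore cannot yield the lower bound. What the paper does---and what you need---is to \emph{first} establish the improved upper bound $|s_2(t)|\lesssim e^{-(\lambda_2'-2\epsilon)t}$ (via the contraction iteration or your Gr\"onwall on $s_2$, using only the crude a~priori bound $|s|\le 2\delta$), and \emph{then} feed this into the differential inequality $\dot s_1 \ge -(\lambda_1+C\delta)s_1 - C\delta|s_2|$. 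Because $s_2$ now decays strictly faster than $e^{-(\lambda_1+\epsilon)t}$, the forcing term integrates to something $O(\delta^2)e^{-(\lambda_1+\epsilon)t}$, which for small $\delta$ is dominated by the homogeneous part $s_1^{in}e^{-(\lambda_1+\epsilon)t}$. Swap the order of your first two steps and the argument goes through.
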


We will use the local normal form to study the local maps. Our main
technical tool to prove the above Theorem is the following
{\it boundary value problem due to  Shil'nikov} (see \cite{Shil67}):
\begin{prop}\label{bvp}
  There exists $\epsilon_0>0$ such that for any $0<\epsilon\le
  \epsilon_0$, there exist $\delta>0$ such that the following
  hold. For any $s^{in}=(s_1^{in},s_2^{in})$, $u^{out}=(u_1^{out},
  u_2^{out})$ with $|s|, |u|\le \delta$ and any large $T>0$, there
  exists a unique solution $(s^T,u^T):[0,T]\to B_\delta$ of the system
  (\ref{eq:norm-form2}) with the property $s^T(0)=s^{in}$ and
  $u^T(T)=u^{out}$. Let
  \begin{equation}\label{eq:su1}(s^{(1)},u^{(1)})(t)=(e^{-\lambda_1
      t}s_1^{in}, e^{-\lambda_2 t}s_2^{in},
    e^{-\lambda_1(T-t)}u_1^{out} , e^{-\lambda_2(T-t)}u_2^{out}),
  \end{equation}
  we have
$$ |s_{1}^T(t)-s_1^{(1)}(t)|\le  \delta  e^{-(\lambda_1-\epsilon) t},  \quad |s_{2}^T(t)-s_2^{(1)}(t)|\le \delta e^{-(\lambda_2'-2\epsilon) t}, $$
$$ |u_{1}^T(t)-u_1^{(1)}(t)|\le \delta e^{-(\lambda_1-\epsilon)(T-t)},   \quad |u_{2}^T(t)-u_2^{(1)}(t)|\le \delta e^{-(\lambda_2'-2\epsilon)(T-t)},$$ where $\lambda_2'=\min\{\lambda_2, 2\lambda_1\}$. Furthermore, for $s_1$ and $u_1$, we have an additional lower bound estimate:
\begin{equation}\label{eq:lowerbd} |s_1^T(t)|\ge \frac 12\ |s_1^{in}|\
  e^{-(\lambda_1+\epsilon)t}, \quad |u_1^T(t)|\ge \frac 12\ |u_1^{out}|\
  e^{-(\lambda_1 +\epsilon)(T-t)}.
\end{equation}
Note that for (\ref{eq:lowerbd}) to hold, the choice of $\delta$ needs
to depend on a lower bound for $|s_1^{in}|$ and $|u_1^{out}|$.
\end{prop}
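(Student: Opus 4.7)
The plan is to recast the two-point boundary value problem as a fixed point equation on a Banach space of exponentially weighted functions on $[0,T]$, and to solve it by contraction. Using the normal form from Theorem~\ref{normal-form}, I rewrite the system in integral form by propagating the $s$-components forward from $t=0$ and the $u$-components backward from $t=T$:
\begin{equation*}
s_i(t) = e^{-\lambda_i t}s_i^{in} + \int_0^t e^{-\lambda_i(t-\tau)}F_i(s,u)(\tau)\,d\tau,
\end{equation*}
\begin{equation*}
u_i(t) = e^{-\lambda_i(T-t)}u_i^{out} - \int_t^T e^{\lambda_i(t-\tau)}G_i(s,u)(\tau)\,d\tau.
\end{equation*}
Any continuous solution of these equations automatically satisfies the boundary conditions $s(0)=s^{in}$ and $u(T)=u^{out}$, so the problem reduces to finding a fixed point of the right-hand sides.

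I would work on the Banach space $\B_T$ of continuous $(s,u):[0,T]\to B_\delta$ equipped with the weighted sup-norm comparing $|s_1(t)|,|u_1(t)|$ with $\delta e^{-(\lambda_1-\epsilon)t}$ and $\delta e^{-(\lambda_1-\epsilon)(T-t)}$, and $|s_2(t)|,|u_2(t)|$ with $\delta e^{-(\lambda_2'-2\epsilon)t}$ and $\delta e^{-(\lambda_2'-2\epsilon)(T-t)}$, where $\lambda_2'=\min\{\lambda_2,2\lambda_1\}$. Define an operator $\mT$ by the right-hand sides above. The key estimates follow from the structure of $F_i,G_i$ given by Corollary~\ref{normal-hamiltonian-form}: terms containing an explicit factor of $s_i$ or $u_i$, once convolved with the appropriate exponential Green's function $e^{-\lambda_i(t-\tau)}$ or $e^{\lambda_i(t-\tau)}$, reproduce the same exponential decay rate of the weighted norm up to an extra factor of $\delta$; the $\epsilon$ losses in the exponents provide the room to absorb the convolution constants uniformly in $T$. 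The quadratic monomials $s_1^2 O(1)$ in $F_2$ and $u_1^2 O(1)$ in $G_2$ contribute decay at the rate $2\lambda_1$, and this is exactly what forces the exponent $\lambda_2'$ in the bounds for the second components. Choosing $\delta$ small in terms of $\epsilon$ makes $\mT$ a contraction on the unit ball of $\B_T$, yielding simultaneous existence, uniqueness, and the four upper-bound estimates.

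For the lower bound (\ref{eq:lowerbd}), I would return to the differential equation $\dot s_1 = -\lambda_1 s_1 + F_1$ with $F_1 = s_1\,O_1(s,u) + s_2\,O_1(s,u)$. Using the already established upper bound $|s_2(\tau)|\le \delta\, e^{-(\lambda_2'-2\epsilon)\tau}$, which decays strictly faster than the expected rate for $s_1$, I would rewrite the equation as $\dot s_1 = -\lambda_1 s_1(1+\phi(\tau)) + r(\tau)$ with $|\phi|=O(\delta)$ and $|r(\tau)|=O(\delta^2)e^{-(\lambda_2'-2\epsilon)\tau}$, and apply a Gronwall comparison. Provided $\delta$ is small enough relative to a prescribed lower bound for $|s_1^{in}|$, the additive error $r$ remains dominated by $\tfrac12|s_1^{in}|e^{-(\lambda_1+\epsilon)t}$ and the multiplicative perturbation $\phi$ is absorbed by the $\epsilon$ in the exponent, yielding the stated bound. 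The estimate for $u_1^T$ is symmetric under the time reversal $t\mapsto T-t$.

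The main obstacle I anticipate is the bookkeeping for the $s_2$ and $u_2$ components. Their decay is capped at the rate $2\lambda_1$ by the resonant monomials $s_1^2$ and $u_1^2$ in $F_2,G_2$, so checking that every other monomial combination produced by $F_i,G_i$ — after convolution with the exponential Green's function — still decays at least at rate $\lambda_2'-2\epsilon$ uniformly in $T$ is the most delicate part of the argument, and is precisely what forces the exponent $\lambda_2'=\min\{\lambda_2,2\lambda_1\}$ rather than $\lambda_2$ in the statement.
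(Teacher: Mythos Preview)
Your proposal is correct and follows essentially the same route as the paper. The paper writes the same integral operator $\mF$, cites Shil'nikov for the contraction in the uniform norm, and then proves the weighted estimates by explicitly bounding the successive differences $|s_i^{(k+1)}-s_i^{(k)}|$ and $|u_i^{(k+1)}-u_i^{(k)}|$ by $2^{-k}\delta$ times the appropriate exponential; your packaging of the same computation as a contraction directly in the weighted norm is a standard and slightly cleaner repackaging of the identical estimates, and your lower-bound argument via the differential inequality for $s_1$ is exactly what the paper does.
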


\begin{proof}
  Let $\Gamma$ denote the set of all smooth curves $(s,u):[0,T]\to
  B(0,\delta)$ such that the $s(0)=(s_1^{in},s_2^{in})$ and
  $u(T)=(u_1^{out}, u_2^{out})$. We define a map $\mF:\Gamma \to
  \Gamma$ by $\mF(s,u) = (\tilde{s}, \tilde{u})$, where
  $$ \begin{aligned}
    \tilde{s}_1 & = e^{-\lambda_1 t}s_1^{in} + \int_0^te^{\lambda_1(\xi -t)}   F_1(s(\xi),u(\xi))d\xi ,\\
    \tilde{s}_2 & = e^{-\lambda_2 t}s_2^{in} + \int_0^te^{\lambda_2(\xi -t)}   F_2(s(\xi),u(\xi))d\xi ,\\
    \tilde{u}_1 & = e^{-\lambda_1(T-t)}u_1^{out} - \int_t^T e^{-\lambda_1     (\xi-t)}G_1 (s(\xi), u(\xi)) d\xi, \\
    \tilde{u}_2 & = e^{-\lambda_2(T-t)}u_2^{out} - \int_t^T
    e^{-\lambda_2 (\xi-t)}G_2 (s(\xi), u(\xi)) d\xi .  \end{aligned} $$

It is proved in \cite{Shil67} that for sufficiently small $\delta$,
the map $\mF$ is a contraction in the uniform norm. Let $s^{(1)}, u^{(1)}$
be as defined in (\ref{eq:su1}) and $(s^{(k+1)},
u^{(k+1)})=\mF(s^{(k)}, u^{(k)})$, then $(s^{(k)}, u^{(k)})$ converges
to the solution of the boundary value problem. Using the normal form
(\ref{eq:norm-form2}), we will provide precise estimates on the
sequence $(s^{(k)},u^{(k)})$. The upper bound estimates are
consequences of the following:
\begin{align*} &|s_{1}^{(k+1)}(t) - s_{1}^{(k)}(t)|\le 2^{-k} \delta   e^{-(\lambda_1-\epsilon) t}, \qquad \quad |s_{2}^{(k+1)}(t) - s_{2}^{(k)}(t)|\le   2^{-k}\delta e^{-(\lambda_2'-\epsilon) t}, \\
  &|u_{1}^{(k+1)}(t) - u_{1}^{(k)}(t)|\le 2^{-k} \delta
  e^{-(\lambda_1-\epsilon)(T-t)}, \quad |u_{2}^{(k+1)}(t) -
  u_{2}^{(k)}(t)|\le 2^{-k} \delta e^{-(\lambda_2'-\epsilon)(T-t)}.
\end{align*}

We have
\begin{align*}
  &|s_1^{(2)}(t)-s_1^{(1)}(t)| = \int_0^t e^{\lambda_1(\xi -t)}\left|     s_1^{(1)}(\xi)\, O_1(s,u) + s_2^{(1)} %K, add
  (\xi)\,O_1(s,u)\right|d\xi \\
  & \le \int_0^T e^{\lambda_1(\xi -t)} (O(\delta^2) e^{-\lambda_1 \xi} +   O(\delta^2) e^{-\lambda_2\xi}) d\xi \\
  & \le O(\delta^2) t e^{-\lambda_1 t}
%K, add intermediate estimate
  \le C \dfrac{te^{\eps \, t}}{\epsilon t}\delta^2
  e^{-(\lambda_1-\epsilon)t} \le C \epsilon^{-1}\delta^2
  e^{-(\lambda_1-\epsilon)t} \le
  \frac12\delta e^{-(\lambda_1-\epsilon)t}.
\end{align*}
Note that the last inequality can be guaranteed by choosing $\delta
\le C^{-1}\epsilon$.  Similarly
\begin{align*}
  &|s_2^{(2)}(t)-s_2^{(1)}(t)| = \int_0^t e^{\lambda_2(\xi -t)}\left|     %K, some misprints
  (s_1^{(1)}(\xi))^2 O(1) + s_2^{(1)}(\xi)O_1(s,u)\right|d\xi \\
  & \le \int_0^t e^{\lambda_2(\xi -t)} (%K, some misprints
  O(\delta^2) e^{-2\lambda_1 \xi} +   O(\delta^2) e^{-\lambda_2\xi}) d\xi \\
  & \le O(\delta^2) \int_0^t e^{\lambda_2'(\xi -t)}e^{-\lambda_2' \xi}
  d\xi
  %K, add intermediate estimate
   \le C \delta^2\,t e^{-\lambda_2' t}
  \le C \delta^2\,\dfrac{e^{2\eps \, t}}{2\epsilon}
  e^{-\lambda_2't} \\
  & \le C \epsilon^{-1}\delta^2 e^{-(\lambda_2'-\epsilon)t} \le
  \frac12\delta e^{-(\lambda_2'-2\epsilon)t}.
\end{align*}
Observe that the calculations for $u_1$ and $u_2$ are identical if we
replace $t$ with $T-t$. We obtain
$$ |u^{(2)}_1(t)-u^{(1)}_1(t)| \le \frac12\delta e^{-(\lambda_1-\epsilon)(T-t)}, \quad |u^{(2)}_2(t)-u^{(1)}_2(t)| \le \frac12\delta e^{-(\lambda_2'-2\epsilon)(T-t)}. $$

According to the normal form~(\ref{eq:norm-form2}), we have there
exists $C'>0$ such that
$$ \|\partial_{s}F_1\| \le C' \|(s,u)\| , \quad \|\partial_{u}F_1\| \le C' \|s\|. $$ Using the inductive hypothesis for step $k$, we have $\|s^{(k)}(t)\| \le 2\delta e^{-(\lambda_1-\epsilon)t}$. It follows that
\begin{align*}
  &|s^{(k+2)}_1(t)-s^{(k+1)}_1(t)|\\
  &\le \int_0^t e^{\lambda_1(\xi-t)}\left( \|\partial_{s}F_1\|\,     \|s^{(k+1)}-s^{(k)}\| + \|\partial_{u}F_1\|\, \|u^{(k+1)}-u^{(k)}\| \right)   d\xi   \\
  & \le C' \int_0^t e^{\lambda_1(\xi-t)} \left( \delta 2^{-k}\delta     e^{-(\lambda_1-\epsilon)\xi} + \delta     e^{-(\lambda_1-\epsilon)\xi}2^{-k}\delta    \right) d\xi \\
  & \le 2^{-k}\delta e^{-(\lambda_1-\epsilon)t} \int_0^t 2C'
  e^{-\epsilon \xi}\delta d\xi \le 2^{-(k+1)} \delta
  e^{-(\lambda_1-\epsilon)t}.
\end{align*}
Note that the last inequality can be guaranteed by choosing $\delta$
sufficiently small depending on $C'$ and $\epsilon$. The estimates for
$s_2$ needs more detailed analysis. We write
\begin{align*}
  & |s^{(k+2)}_2(t) - s^{(k+1)}_2(t)| \le \int_0^t e^{\lambda_2(\xi - t)}\cdot   \\
  &\left( \|\partial_{s_1}F_2\| |s_1^{(k+1)}-s_1^{(k)}| + \|\partial_{s_2}F_2\|     |s_2^{(k+1)}-s_2^{(k)}| + \|\partial_{u}F_2\|     \|u_2^{(k+1)}-u_2^{(k+1)}\| \right) d\xi \\
  & = \int_0^te^{\lambda_2(\xi-t)}(I + II + III) d\xi.
\end{align*}
We have $\|\partial_{s_1}F_2\| = O_1(s_1)O(1) + O_1(s_2)O(1)$,
hence
$$ I \le C' (\delta e^{-(\lambda_1-\epsilon)\xi} + \delta e^{-(\lambda_2'-2\epsilon)\xi}) 2^{-k} \delta e^{-(\lambda_1-\epsilon)\xi} \le C' 2^{-k}\delta2 e^{-2(\lambda_1'-\epsilon)\xi}.$$
Since $\|\partial_{s_2}F_2\| = O_2(s_1) + O_1(s,u)=O_1(s,u)$, we have $ II \le C' \delta^2 2^{-k} e^{-(\lambda_2'-2\epsilon)\xi}$. Finally, as $\|\partial_uF_2\| = O_2(s_1) + O_1(s_2)O(1)$, we have
$$ III \le C' 2^{-k} \delta (\delta^2 e^{-2(\lambda_1-\epsilon)\xi} + \delta e^{-(\lambda_2'-2\epsilon)\xi}) \le C' 2^{-k}\delta^2 e^{-(\lambda_2'-2\epsilon)\xi}.$$ Note that in the last line, we used $\lambda_2' \le 2\lambda_1 $. Combine the estimates obtained, we have
\begin{multline*} |s^{(k+2)}_2(t) - s^{(k+1)}_2(t)| \le \delta 2^{-k} \int_0^t   3C' \delta e^{\lambda_2(\xi - t)}e^{-(\lambda_2'-2\epsilon)\xi} d\xi \\
  \le \delta 2^{-k}e^{-(\lambda_2'-2\epsilon)t} \int_0^t 3C'
  \delta e^{-2\epsilon \xi}d\xi \le 2^{-(k+1)}\delta
  e^{-(\lambda_2'-2\epsilon)t}.
\end{multline*}
The estimates for $u_1$ and $u_2$ follow from symmetry.

We now prove the lower bound estimates (\ref{eq:lowerbd}). We will
first prove the estimates for $s_1$ in the case of $s_1^{in}>0$.  We
have the following differential inequality
$$ \dot{s}_1 \ge  -(\lambda_1 + C'\delta)s_1 + s_2 O_1(s,u).$$
Note that $|s_2(t)|\le 2 \delta e^{-\lambda_2' t}$ due to the already
established upper bound estimates. Choose $\delta$ such that
$C'\delta \le \epsilon$, we have
\begin{align*} s_1(t) & \ge s_1^{in}e^{-(\lambda_1 +\epsilon)t} - \int_0^t   e^{-(\lambda_1+\epsilon)(\xi-t)}2\delta e^{-(\lambda_2'-2\epsilon) \xi}\cdot   C'   \delta d\xi \\
  & \ge s_1^{in}e^{-(\lambda_1 +\epsilon)t} - 2C'\delta^2
  (\lambda_2'-\lambda_1-3\epsilon)^{-1} e^{-(\lambda_1+\epsilon)t} \ge
  \frac12 s_1^{in}e^{-(\lambda_1 +\epsilon)t}.
\end{align*}
For the last inequality to hold, we choose $\epsilon_0$ small enough
such that $\lambda_2'-\lambda_1-3\epsilon>0$, and choose $\delta$ such
that $2C'\delta^2(\lambda_2'-\lambda_1-3\epsilon)^{-1}\le \frac12
s_1^{in}$.

The case when $s_1^{in}<0$ follows from applying the above analysis to
$-s_1$. The estimates for $u_1$ can be obtained by replacing $s_i$
with $u_i$ and $t$ with $T-t$ in the above analysis.
\end{proof}

\begin{proof}[Proof of Theorem \ref{tangency}]

  It follows from Proposition~\ref{bvp} that $|s_1^T(t)|\ge
  \frac12|s_1^{in}| e^{-(\lambda_1+\epsilon)t}$ and $|s_2^T(t)|\le
  2\delta e^{-(\lambda_2'-2\epsilon)t}$. We obtain the estimates for
  $s_1$ and $s_2$ by choosing $\alpha =
  \frac{\lambda_2-2\epsilon}{\lambda_1+\epsilon}$ and
  $C=4\delta/|s_1^{in}|$. The case of $u_1$ and $u_2$ can be proved
  similarly. \end{proof}

\section{Properties of the local maps}
\label{sec:local}

 Denote $p^\pm = (s^\pm , 0) = \gamma^\pm  \cap \Sigma^s_\pm$ and $q^\pm = (0, u^\pm) =\gamma^\pm \cap \Sigma^u_\pm$. Although the local map $\Phl^{++}$ is not defined at $p^+$ (and its inverse is not defined at $q^+$), the map is well defined from a neighborhood close to $p^+$ to a neighborhood close to $q^+$. In particular, for any $T>0$, by Proposition~\ref{bvp}, there exists a trajectory $(s,u)^{++}_T$ of the Hamiltonian flow such that
$$ s_T^{++}(0) = s^+, \quad u_T^{++}(T) = u^+. $$
Denote $x^{++}_T = (s,u)^{++}_T(0)$ and $y^{++}_T = (s,u)^{++}_T(T)$, we have $\Phl^{++}(x^{++}_T) = y^{++}_T$, and $x^{++}_T \to p^+$, $y^{++}_T \to q^+$ as $T\to \infty$. We apply the same procedure to other local maps and extend the notations by changing the
%K,misprint
superscripts accordingly.

%K, more explanations
Let $N=N_k(s,u)$ be the Hamiltonian from Theorem \ref{normal-form},
$E(T)=N((s,u)^{++}_T)$ be the energy of the orbit, and $S_{E(T)}=\{N=E(T)\}$
be the corresponding energy surface.
We will show that the domain of $\Phl^{++}|_{S_{E(T)}}$
can be extended to a larger
subset of $\Sigma^{s, E(T)}_+$ containing $x_T^{++}$.   We call
$R \subset \Sigma^s_+\cap S_{E(T)}$ a rectangle if it is bounded by four vertices
$x_1, \cdots, x_4$ and $C^1$ curves $\gamma_{ij}$ connecting $x_i$ and $x_j$,
where $ij\in \{12, 34, 13, 24\}$. The curves does not intersect except at
the vertices. Denote $B_\dt(x)$ the $\dt$-ball around $x$ and
the local parts of
%K, I find notation $\gm$ confusing. I rephrase
invariant manifolds
$$
T_s^+=W^s(0)\cap \Sigma^s_{+} \cap B_\delta(p^+), \quad
T_u^+=W^u(0)\cap
\Sigma^u_{+} \cap B_\delta(q^+)
$$
%K,restricted sections were not defined yet
and the $\Sigma$-sections restricted to an energy surface $S_E$ by
$$
\Sigma^{s,E}_+=\Sigma^{s}_+ \cap S_E \quad \text{  and }
\quad \Sigma^{u,E}_+=\Sigma^{u}_+ \cap S_E.
$$

\begin{figure}[t]
 \centering
  \def\svgwidth{5in}
 \input 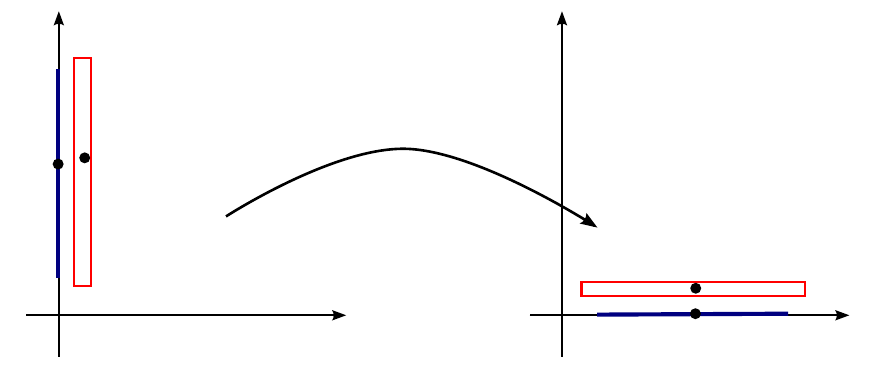_tex
 \caption{Local map $\Phl^{++}$}
 \label{fig:localmap}
\end{figure}

%K, I did not find localmap1.pdf
% \begin{figure}[t]
%   \begin{center}
%     \includegraphics[width=3in]{figures/localmap1.pdf}
%   \end{center}
%   \caption{Local map $\Phl^{++}$}
%   \label{fig:localmap}
% \end{figure}

The main result of this section is the following
\begin{thm}\label{local-domain}
  There exists $\delta_0>0$ and $T_0>0$ such that for any $T>T_0$ and $0< \delta < \delta_0$,
  there exists a rectangle $R^{++}(T)\subset \Sigma^{s,E(T)}_+$, with
  vertices $x_i(T)$ and
  %K, I believe C^1 is used later
  $C^1$-smooth sides $\gamma_{ij}(T)$, such that the following hold:
  \begin{enumerate}
  \item $\Phl^{++}$ is well defined on $R^{++}(T)$. $\Phl^{++}(R^{++}(T))$ is also
  a rectangle  with  vertices $x_i'(T)$ and  sides   $\gamma'_{ij}(T)$.
  \item As $T\to 0$, $\gamma_{12}(T)$ and $\gamma_{34}(T)$ both converge in Hausdorff
  metric to a single curve containing $T_s^+$;
  %K, change parametrization, it was E
  $\gamma'_{13}(T)$ and $\gamma'_{24}(T)$ converges to a single curve
  containing  $T_u^+$.
  %K, I comment this 3 words:  bounded from below.
  \end{enumerate}
  The same conclusions, after substituting the superscripts according to
  the signatures of the map, hold for other local maps.
\end{thm}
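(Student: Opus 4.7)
The plan is to use the Shil'nikov boundary value problem of Proposition~\ref{bvp} to construct a smooth three parameter family of orbits passing near the origin, then cut out a two dimensional rectangle by intersecting with the energy surface $S_{E(T)}$, and finally read off the Hausdorff convergence of the sides as $T\to\infty$ directly from the exponential decay estimates~(\ref{eq:su1}).

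First, fix $T>T_0$ large. For $(\sigma,\tau)$ in a small fixed box $I^2=[-\eta,\eta]^2$ (with $\eta$ a suitably small constant) and $T'$ close to $T$, prescribe the boundary data $s^{in}(\sigma)=(\delta,s_2^++\sigma)$ and $u^{out}(\tau)=(\delta,u_2^++\tau)$. Proposition~\ref{bvp} produces a unique orbit $(s,u)(\cdot;\sigma,\tau,T')$ on $[0,T']$, entirely contained in $B_\delta$, so the map $\Psi:(\sigma,\tau,T')\mapsto(u^{in},s^{out})$ is $C^1$ and reduces to $(x_T^{++},y_T^{++})$ at $(\sigma,\tau,T')=(0,0,T)$. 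Using the leading term in~(\ref{eq:su1}), one computes
\[
\partial_{T'}N\bigl(s^{in}(\sigma),u^{in}(\sigma,\tau,T')\bigr)\Big|_{(0,0,T)} = -\lambda_1^2\,\delta\,u_1^+\,e^{-\lambda_1 T}+o\bigl(e^{-\lambda_1 T}\bigr),
\]
which is nonzero by assumption A3 (so $u_1^+\ne 0$). Since the same estimates show that $\partial_\sigma N$ and $\partial_\tau N$ are of comparable exponential order $e^{-\lambda_1 T}$, the implicit function theorem yields a $C^1$ function $T'=T'(\sigma,\tau;T)$ on a box $I^2$ of size independent of $T$ for $T>T_0$, solving the energy constraint $N(\Psi_{\mathrm{in}})=E(T)$.

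Next, set
\[
R^{++}(T)=\bigl\{\bigl(s^{in}(\sigma),u^{in}(\sigma,\tau,T'(\sigma,\tau))\bigr):(\sigma,\tau)\in I^2\bigr\}\subset\Sigma^{s,E(T)}_+,
\]
with vertices $x_i(T)$ the images of the four corners of $I^2$ and $C^1$ sides $\gamma_{ij}(T)$ the images of its edges, indexed so that $\gamma_{12}$ and $\gamma_{34}$ correspond to $\tau=\mp\eta$. By construction every orbit in the family reaches $\Sigma^u_+$ at time $T'$ without leaving $B_\delta$, so $\Phl^{++}|_{S_{E(T)}}$ is well defined on $R^{++}(T)$, and $\Phl^{++}(R^{++}(T))$ is the analogous rectangle in $\Sigma^{u,E(T)}_+$ obtained by $(\sigma,\tau)\mapsto\bigl(s^{out}(\sigma,\tau,T'(\sigma,\tau)),u^{out}(\tau)\bigr)$, with image sides $\gamma'_{13},\gamma'_{24}$ corresponding to $\sigma=\mp\eta$. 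For the Hausdorff convergence, the upper bounds in Proposition~\ref{bvp} give $|u^{in}|,|s^{out}|\le Ce^{-(\lambda_1-\epsilon)T}$ uniformly on $I^2$; hence $\gamma_{12}(T)$ and $\gamma_{34}(T)$ both collapse onto the segment $\{(\delta,s_2^++\sigma,0,0):\sigma\in I\}$, a single curve containing $T_s^+$ provided $\eta\geq\delta$, and symmetrically $\gamma'_{13}(T),\gamma'_{24}(T)$ converge to a single curve containing $T_u^+$.

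The main obstacle I expect is the uniform in $T$ version of the implicit function theorem used to solve for $T'$: the denominator $\partial_{T'}N$ is only of exponential order $e^{-\lambda_1 T}$, so one has to verify that the $(\sigma,\tau)$ derivatives and the second derivatives of $N\circ\Psi$ are of comparable order. This amounts to differentiating the Shil'nikov contraction once and then once more with respect to the boundary data, which is a careful but routine refinement of the estimates yielding~(\ref{eq:lowerbd}). The remaining local maps $\Phl^{-+},\Phl^{+-},\Phl^{--}$ follow by the same argument with obvious sign changes in the boundary data, and assumption A3$'$ requires only relabeling.
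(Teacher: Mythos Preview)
Your approach is genuinely different from the paper's. The paper constructs $R^{++}(T)$ geometrically via invariant cone fields: it introduces strong stable and unstable cones $C^{s,c}$, $C^{u,c}$ (Lemma~\ref{local-cone}), restricts them to the energy surface $S_{E(T)}$, shows the restricted cones are nonempty near $p^+$ and $q^+$ (Lemma~\ref{nonemptycone}), and then builds the sides of the rectangle as \emph{stable} and \emph{unstable curves}, i.e.\ curves tangent to the restricted cone fields whose forward (respectively backward) images remain in $B_r$ (Proposition~\ref{stable-curve}). The exponential contraction of such curves under the flow yields both the well-definedness of $\Phl^{++}$ on the rectangle and the Hausdorff collapse of the sides, without ever differentiating the Shil'nikov solution with respect to boundary data.

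Your parametrized-BVP route can in principle be made to work, but the implicit function step you flag is more delicate than ``careful but routine.'' Proposition~\ref{bvp} as written gives only $C^0$ bounds of the form $|u^{in}-u^{(1)}(0)|\le \delta e^{-(\lambda_1-\epsilon)T}$; it says nothing about derivatives in $(\sigma,\tau,T')$. To solve $N=E(T)$ for $T'$ uniformly on a box of fixed size $\eta$ --- and you need $\eta\ge\delta$ so the limit curve contains all of $T_s^+$, as you note --- you must bound $|\partial_\sigma N|/|\partial_{T'}N|$ uniformly in $T$. If one naively propagates the $\epsilon$-lossy bounds of Proposition~\ref{bvp} to the derivatives, the contribution $\partial_{u_1}N\cdot\partial_\sigma u_1^{in}$ comes out of order $\delta^2 e^{-(\lambda_1-\epsilon)T}$ against $|\partial_{T'}N|\sim\lambda_1^2\delta^2 e^{-\lambda_1 T}$, giving a ratio of order $e^{\epsilon T}$ that blows up. To close the argument you need the sharper $C^1$ estimates on the Shil'nikov map (removing the $\epsilon$-loss at the linearized level), which is a real additional input beyond the contraction argument sketched in the paper. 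The cone-field construction sidesteps this entirely, and as a bonus it produces sides that are \emph{by construction} stable and unstable curves --- a property the paper immediately exploits in Corollary~\ref{trans-rect} and in verifying the cone conditions C1--C3 for the isolating block. With your rectangle those properties would hold a posteriori but would require a separate check.
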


To get a picture of Theorem~\ref{local-domain}, note that for a given energy
$E > 0$,
%K, notational fix
the restricted sections $\Sigma^{s,E}_+$ and $\Sigma^{u,E}_+$ are both
transversal
to the $s_1$ and $u_1$ axes, and hence these sections can be parametrized
by the $s_2$ and $u_2$ components. An illustration of the local maps and
the rectangles is contained in Figure~\ref{fig:localmap}.
% and ~\ref{fig:rectangles}.

 We will only prove Theorem~\ref{local-domain} for the local map $\Phl^{++}$.
 The proof for the other local maps are identical with proper changes of notations.

%\begin{figure}[t]
%  \begin{center}
%    \includegraphics[width=5in]{figures/rectangles.pdf}
%  \end{center}
%  \caption{Rectangles mapped under $\Phl^{++}$}
%  \label{fig:rectangles}
%\end{figure}

Let $(v_{s_1}, v_{s_2}, v_{u_1}, v_{u_2})$ denote the coordinates for
the tangent space induced by $(s_1, s_2, u_1, u_2)$.
%K, recall the reader
As before $B_r$ denotes the $r-$neighborhood of
the origin.
%K, I move c from sub to superscript to distinguish from energy subscript
For $c>0$ and $x\in B_r$, we define the
\emph{strong unstable cone} by
%K, I change inequality to strick ones. Otherwise, claims that a cone is empty
%made below do not apply
$$ C^{u,\,c}(x)=\{c|v_{u_2}|^2 > |v_{u_1}|^2 + |v_{s_1}|^2 + |v_{s_2}|^2\}$$
and the \emph{strong stable cone} to be
$$ C^{s,\,c}(x)=\{c|v_{s_2}|^2 > |v_{s_1}|^2 + |v_{u_1}|^2 + |v_{u_2}|^2\}.$$
The following properties follows from the fact that the linearization
of the flow at $0$ is hyperbolic. We will drop the superscript $c$ when
the dependence in $c$ is not stressed.

\begin{lem}\label{local-cone}
For any
$0<\epsilon<\lambda_2-\lambda_1$, there exists $r=r(\epsilon, c)$ such
that the following holds:
\begin{itemize}
\item If $\phi_t(x)\in B_r$ for $0\le t \le t_0$, then $
  D\phi_t(C^{u}(x))\subset C^{u}(\phi_t(x))$ for all $0\le t\le
  t_0$. Furthermore, for any $v\in C^{u}(x)$,
$$ |D\phi_t(x)v| \ge e^{(\lambda_2-\epsilon)t}, \quad 0 \le t\le t_0.$$
\item If $\phi_{-t}(x)\in B_r$ for $0\le t \le t_0$, then $
  D\phi_{-t}(C^{s}(x))\subset C^{s}(\phi_{-t}(x))$ for all $0\le
  t\le t_0$. Furthermore, for any $v\in C^{s}(x)$,
$$ |D\phi_{-t}(x)v| \ge e^{(\lambda_2-\epsilon)t}, \quad 0 \le t\le t_0.$$
\end{itemize}
\end{lem}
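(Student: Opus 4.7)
The plan is a standard cone-field computation using the normal form established in Theorem~\ref{normal-form}. I will treat only the unstable cone; the stable case follows by applying the same argument to the time-reversed flow, which has hyperbolic spectrum of the same type with the roles of $s$ and $u$ interchanged.

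First I would write down the variational equation along an orbit $\phi_t(x)\subset B_r$. Differentiating the normal-form system \eqref{eq:norm-form2} in the initial condition, the tangent vector $v=(v_{s_1},v_{s_2},v_{u_1},v_{u_2})$ satisfies
\[
\dot v = \bigl(\Lambda + R(\phi_t(x))\bigr)\, v,
\qquad
\Lambda=\operatorname{diag}(-\lambda_1,-\lambda_2,\lambda_1,\lambda_2),
\]
where the remainder $R$ is built from the derivatives of $F_1,F_2,G_1,G_2$. By Corollary~\ref{normal-hamiltonian-form} each entry of $R$ is $O(|(s,u)|)$, hence $\|R(\phi_t(x))\|\le C\, r$ uniformly on $B_r$ for some constant $C=C(N_k)$.

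Next I would introduce the Lyapunov form
\[
Q(v)=c|v_{u_2}|^2-|v_{u_1}|^2-|v_{s_1}|^2-|v_{s_2}|^2,
\]
so that $C^{u,c}(x)=\{Q>0\}$, and compute $\tfrac{d}{dt}Q(v(t))$ along the variational flow. The diagonal contribution is
\[
2c\lambda_2|v_{u_2}|^2-2\lambda_1|v_{u_1}|^2+2\lambda_1|v_{s_1}|^2+2\lambda_2|v_{s_2}|^2,
\]
and the remainder contributes a term bounded by $C'\, r\,|v|^2$. At the cone boundary $Q(v)=0$ one has $|v|^2=(c+1)|v_{u_2}|^2$ and $|v_{u_1}|^2\le c|v_{u_2}|^2$, so the diagonal part is $\ge 2c(\lambda_2-\lambda_1)|v_{u_2}|^2$. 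Choosing $r=r(\epsilon,c)$ so small that $C'r(c+1)<2c(\lambda_2-\lambda_1)$, the derivative $\tfrac{d}{dt}Q$ is strictly positive on $\{Q=0\}\setminus\{0\}$, giving strict forward invariance $D\phi_t(C^{u,c}(x))\subset C^{u,c}(\phi_t(x))$.

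For the growth estimate, once $v(t)\in C^{u,c}(\phi_t(x))$ the inequality $|v|^2\le (c+1)|v_{u_2}|^2$ holds throughout. Using
\[
\tfrac{d}{dt}|v_{u_2}|^2 \;\ge\; 2\lambda_2|v_{u_2}|^2 - C\, r\,|v|^2
\;\ge\; 2\bigl(\lambda_2-\tfrac{1}{2}C r(c+1)\bigr)|v_{u_2}|^2,
\]
I would shrink $r$ further so that $\tfrac{1}{2}Cr(c+1)<\epsilon$ and then Gronwall yields $|v_{u_2}(t)|\ge e^{(\lambda_2-\epsilon)t}|v_{u_2}(0)|$. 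Because $|v|$ and $|v_{u_2}|$ are equivalent on $C^{u,c}$, this implies the desired bound $|D\phi_t(x)v|\ge e^{(\lambda_2-\epsilon)t}|v|$ (up to an absorbable constant). The stable claim is obtained by the identical calculation for the backward flow, replacing $\lambda_i$ by $-\lambda_i$ and swapping the roles of the coordinates.

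The only real subtlety is choosing $r$ small enough to absorb the $O(r)$ perturbation in both the cone-invariance inequality and the growth inequality simultaneously; this is purely quantitative and forces the stated dependence $r=r(\epsilon,c)$.
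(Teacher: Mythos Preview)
Your argument is correct and is the standard Lyapunov-form/cone-field computation for a hyperbolic fixed point; the paper itself does not give a proof of this lemma, remarking only that it ``follows from the fact that the linearization of the flow at $0$ is hyperbolic.'' One small note: your growth estimate naturally produces a multiplicative constant $(c+1)^{-1/2}$ in front of $e^{(\lambda_2-\epsilon)t}|v|$, which is exactly what the paper uses downstream (cf.\ the constant $C$ in Lemma~\ref{mM-cone} and Corollary~\ref{nhm}); the lemma as stated omits both this constant and the factor $|v|$, so your parenthetical ``up to an absorbable constant'' is the right reading.
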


For each energy surface $E$, we define the restricted cones
$C^{u}_E(x)=C^{u}(x)\cap T_xS_E$ and $C^{s}_E(x)=C^{s}(x)\cap
T_xS_E$.

%K
{\bf Warning:} Recall that the Hamiltonian $N$ under consideration
by Theorem \ref{normal-form}
has the form $N_k = \lambda_1 s_1u_1 + \lambda_2 s_2u_2 + O_3(s,u)$.
It is easy to see that the restricted cones $C^{u}_E(x)$ and
$C^{s}_E(x)$ might {\it be empty}. Excluding this
case requires  a special care!

Since the energy surface is invariant under the flow, its
tangent space is also invariant.
We have the following observation:
\begin{lem}
  If $\phi_t(x)\in B_r$ for $0\le t \le t_0$, then $C^{u}_E$ is
  invariant under the map $D\varphi_t$ for $0\le t\le t_0$. In
  particular, if $C^{u}_E(x)\ne \emptyset$, then
  $C^{u}_E(\phi_t(x))\ne \emptyset$. Similar conclusions hold for
  $C^{s}_E$ with $\phi_{-t}$.
\end{lem}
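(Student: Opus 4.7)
The plan is to decompose the invariance of the restricted cone $C^u_E$ into two separate invariance statements and intersect them. First, by Lemma~\ref{local-cone}, under the hypothesis that the orbit stays in $B_r$ for times in $[0,t_0]$, the (unrestricted) strong unstable cone is forward-invariant: $D\phi_t(C^u(x))\subset C^u(\phi_t(x))$ for $0\le t\le t_0$. Second, since the Hamiltonian $N$ is constant along the flow, the flow $\phi_t$ restricts to a diffeomorphism $S_E\to S_E$ between energy surfaces. Consequently, its differential sends tangent spaces to tangent spaces: $D\phi_t(T_xS_E)=T_{\phi_t(x)}S_E$.

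Now I would take any $v\in C^u_E(x) = C^u(x)\cap T_xS_E$. The first invariance gives $D\phi_t(v)\in C^u(\phi_t(x))$, while the second gives $D\phi_t(v)\in T_{\phi_t(x)}S_E$. Intersecting these two conclusions yields
\[
D\phi_t(v)\in C^u(\phi_t(x))\cap T_{\phi_t(x)}S_E = C^u_E(\phi_t(x)),
\]
which is exactly the claimed invariance $D\phi_t(C^u_E(x))\subset C^u_E(\phi_t(x))$. The non-emptiness statement is then immediate: picking any $v\in C^u_E(x)$, its image $D\phi_t(v)$ witnesses that $C^u_E(\phi_t(x))$ is non-empty as well.

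The argument for $C^s_E$ and $\phi_{-t}$ is symmetric: Lemma~\ref{local-cone} provides the backward invariance of $C^s$, and the backward flow $\phi_{-t}$ likewise preserves energy and hence maps tangent spaces of $S_E$ to tangent spaces of $S_E$, so the same intersection argument closes. There is no real obstacle here — the only subtlety is the \textbf{Warning} preceding the lemma, namely that the restricted cone may a priori be empty; but the lemma only claims invariance (hence propagation of non-emptiness), not non-emptiness itself, so that issue is irrelevant to this proof and is deferred to wherever the non-emptiness of $C^u_E$ at a single point is verified.
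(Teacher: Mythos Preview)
Your proof is correct and follows exactly the same reasoning as the paper, which justifies the lemma in a single sentence immediately preceding it: ``Since the energy surface is invariant under the flow, its tangent space is also invariant.'' You have simply written out in full what the paper leaves as an observation: intersect the cone invariance from Lemma~\ref{local-cone} with the tangent-space invariance coming from energy conservation.
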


Let $x$ be such that $\phi_t(x)\in B_r\cap S_E$ for $0\le t\le
t_0$. A Lipschitz curve $\gamma_E^s(x)$ is called
%K, rewording
\emph{stable } if its
forward image stays in $B_r$ for $0\le t\le t_0$, and that the
curve and all its forward images are tangent to the restricted stable
cone field $\{C^{s}_E\}$. For $y$ such that
%K,misprint
$\phi_{-t}(y)\in B_r\cap S_E$ for $0\le t\le t_0$, we may define
the \emph{unstable curve} $\gamma_E^u(y)$ in the same way with $t$
replaced by $-t$ and $C^{s}_E$ replaced by $C^{u}_E$.
Notice that stable and unstable curves are {\it not }
in the tangent space, but in the phase space.

\begin{prop}\label{stable-curve}
%K, add
  In notations of Lemma \ref{local-cone}
  assume that $x, y \in S_E$ satisfies the following conditions.
  \begin{itemize}
  \item $\phi_t(x)\in B_r\cap S_E$ and
  %K, sign change
  $\phi_{-t}(y)\in B_r\cap S_E\ $ for $\ 0\le t\le t_0$.
  \item The restricted cone fields are not empty.
   %K rewording
   Moreover, there exists $a>0$ such that
   %K, since c is part of the conclusion it has to be included
   $C^{s,\,c}_{E}(\phi_{t_0}(z))\ne
    \emptyset$ for $z\in U_{a}(\phi_{t_0}(x))\cap S_E$, and
    $C^{u,\,c}_E(\phi_{-t_0}(z'))\ne \emptyset$ for
    %K, add
    each $z'\in U_{a}(\phi_{-t_0}(y))\cap S_E$.
  \end{itemize}
  Then there exists at least one  stable curve $\gamma_E^s(x)$ and
  one unstable curve   $\gamma_E^u(y)$.

  %K, more words
   If  $a\ge \sqrt{c^2+1}\,r e^{-(\lambda_2-\epsilon)t_0}$, then
   the stable curve $\gamma_E^s(x)$ and the unstable one
   $\gamma_E^u(y)$ can be extended to the boundary of
   %K, I guess you mean
   $B_r(x)$ and of $B_r(y)$ respectively. Furthermore,
  $$ \|\phi_t(x)- \phi_t(x_1)\|\le e^{-(\lambda_2-\epsilon)t},
  \quad x_1\in \gamma_E^s(x),\ 0\le t\le t_0$$
  and
  $$ \|\phi_{-t}(y)- \phi_{-t}(y_1)\|\le e^{-(\lambda_2-\epsilon)t},
  \quad y_1\in \gamma_E^u(y),\ 0\le t\le t_0. $$
  It is possible to choose the curves to be $C^1$.
\end{prop}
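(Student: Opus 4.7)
The plan is to build $\gamma_E^s(x)$ as the pullback under $\phi_{-t_0}$ of a short seed curve through $\phi_{t_0}(x)$ tangent to the restricted stable cone field; $\gamma_E^u(y)$ is obtained symmetrically by pushing forward under $\phi_{+t_0}$ a seed at $\phi_{-t_0}(y)$. The construction relies on the cone invariance and expansion from Lemma~\ref{local-cone}, together with flow-invariance of $S_E$.

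\textbf{Seed curve and pullback.} Since $C^{s,c}$ is open and $C^{s,c}_E=C^{s,c}\cap TS_E$ is assumed non-empty on $U_a(\phi_{t_0}(x))\cap S_E$, I would pick a smooth vector field $V$ on this open set with $V(z)\in C^{s,c}_E(z)$ at each $z$ --- for instance, the orthogonal projection of $\partial_{s_2}$ onto $TS_E$, which lies in the cone for $c$ large enough. Integrating $V$ from $\phi_{t_0}(x)$ until the integral curve exits $U_a$ yields a $C^1$ seed $\tilde\gamma$ of arclength of order $a$ with $\dot{\tilde\gamma}\in C^{s,c}_E$ everywhere. I then set $\gamma_E^s(x):=\phi_{-t_0}(\tilde\gamma)$, a $C^1$ curve through $x$ since $\phi_{-t_0}$ is a diffeomorphism on a neighborhood of $\phi_{t_0}(x)$.

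\textbf{Cone tangency at intermediate times and quantitative bounds.} For $x_1=\phi_{-t_0}(y_1)\in\gamma_E^s(x)$ and any $0\le t\le t_0$, the tangent to $\phi_t(\gamma_E^s(x))=\phi_{t-t_0}(\tilde\gamma)$ at $\phi_t(x_1)$ equals $D\phi_{-(t_0-t)}\dot{\tilde\gamma}(y_1)$, which lies in $C^{s,c}(\phi_t(x_1))$ by Lemma~\ref{local-cone} and in $T_{\phi_t(x_1)}S_E$ by flow-invariance, hence in $C^{s,c}_E(\phi_t(x_1))$. Continuity of the flow together with $\phi_t(x)\in B_r$ and smallness of $\tilde\gamma$ ensures $\phi_t(x_1)\in B_r$ throughout $[0,t_0]$, so $\gamma_E^s(x)$ is a stable curve. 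By the expansion estimate $|D\phi_{-t_0}v|\ge e^{(\lambda_2-\epsilon)t_0}|v|$ on $C^{s,c}$, the arclength of $\gamma_E^s(x)$ is at least $a\,e^{(\lambda_2-\epsilon)t_0}$; under the hypothesized lower bound $a\ge\sqrt{c^2+1}\,r\,e^{-(\lambda_2-\epsilon)t_0}$ this is at least $\sqrt{c^2+1}\,r$, and combined with the cone constraint $|v|\le\sqrt{c^2+1}\,|v_{s_2}|$ on tangents it forces the Euclidean displacement of $\gamma_E^s(x)$ from $x$ to exceed $r$, so the curve reaches $\partial B_r(x)$. Finally, the distance estimate $\|\phi_t(x)-\phi_t(x_1)\|\le e^{-(\lambda_2-\epsilon)t}$ follows by integrating the pointwise contraction $|D\phi_t v|\le e^{-(\lambda_2-\epsilon)t}|v|$ (another immediate consequence of the Lemma applied to the cone-tangent tangents) along $\gamma_E^s(x)$. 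The unstable construction is entirely symmetric.

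\textbf{Main obstacle.} The substantive issue is the degeneracy flagged in the Warning after Lemma~\ref{local-cone}: for generic $E$ the restricted cone $C^{s,c}\cap TS_E$ can be empty, in which case no seed curve exists. This is precisely what the hypothesis that $C^{s,c}_E$ be non-empty on an entire $a$-neighborhood of $\phi_{t_0}(x)$ (respectively of $\phi_{-t_0}(y)$) is designed to bypass, and the quantitative size $a\ge\sqrt{c^2+1}\,r\,e^{-(\lambda_2-\epsilon)t_0}$ is exactly the condition under which the exponential expansion during the backward pullback produces a curve long enough to span $B_r(x)$. Once this compatibility of $c$, $a$, $r$, $t_0$ is arranged, $C^1$-smoothness is automatic because $V$ can be chosen smooth and $\phi_{-t_0}$ is a diffeomorphism.
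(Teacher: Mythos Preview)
Your argument follows the same strategy as the paper: construct a seed curve tangent to $C^{s,c}_E$ in the $a$-neighborhood of $x':=\phi_{t_0}(x)$, pull it back by $\phi_{-t_0}$, and invoke the backward invariance and expansion of the stable cone from Lemma~\ref{local-cone} to obtain both the tangency and the length/diameter estimates. The arclength-versus-diameter comparison via the cone aperture and the role of the threshold $a\ge\sqrt{c^2+1}\,r\,e^{-(\lambda_2-\epsilon)t_0}$ are exactly as in the paper.

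There is one genuine soft spot. You justify that every $x_1\in\gamma_E^s(x)$ satisfies $\phi_t(x_1)\in B_r$ for $0\le t\le t_0$ by ``continuity of the flow together with smallness of $\tilde\gamma$''. But the continuity neighborhood on which $\phi_t(\cdot)\in B_r$ is guaranteed shrinks with $t_0$ in an uncontrolled way, and nothing forces it to contain all of $U_a(x')$; without this, Lemma~\ref{local-cone} does not apply along the whole seed and the expansion bound you use to get arclength $\ge a\,e^{(\lambda_2-\epsilon)t_0}$ is circular. The paper closes exactly this gap by an iterative extension: it first takes a seed small enough to sit inside a neighborhood $U'$ of $x'$ with $\phi_{-t}(U')\subset B_r$ for all $0\le t\le t_0$, pulls back, then repeats the construction at the endpoint $x_1$ (whose orbit is already known to remain in $B_r$), and continues until either the pulled-back curve meets $\partial B_r(x)$ or the seed meets $\partial U_a(x')$. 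The contraction inequality $\|x_1'-x'\|\le\sqrt{c^2+1}\,e^{-(\lambda_2-\epsilon)t_0}\|x_1-x\|$ then shows the former occurs first precisely when $a\ge\sqrt{c^2+1}\,r\,e^{-(\lambda_2-\epsilon)t_0}$. Replacing your one-shot pullback by this stepwise extension fixes the argument without changing its substance.
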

\begin{rmk}
  The stable and unstable curves are not unique. Locally, there exists
  a cone family such that any curve tangent to this cone family is a
  stable/unstable curve.
\end{rmk}

\begin{proof}
  Let us denote $x'=\phi_{t_0}(x)$. From the smoothness of the flow,
  we have that there exist neighborhoods $U$ of $x$ and $U'$ of $x'$
  such that $\phi_{t_0}(U)=U'$ and $\phi_t(U)\in B_r$ for all $0\le
  t\le t_0$. By intersecting $U'$ with $U_a(x')$ if necessary, we may
  assume that $U'\subset U_a(x')$. We have that $C^{s,\,c}_E(z)\ne
  \emptyset$ for all $z\in U'$. It then follows that there exists
  %K, I include a superscript s and sub E. IT is easier for me to read
  a curve $\gamma^s_E(x')\subset U'$ that is tangent to $C^{s,\,c}_E$. As
  $C^{s,\,c}_E$ is backward invariant with respect to the flow, we have
  that $\phi_{-t}(\gamma^s_E(x'))$ is also tangent to $C^{s,\,c}_E$ for
  $0\le t \le t_0$. Let
  %K, there is a confusion with smooth curve l defined later so I change
  %l to dist
  $dist(\gamma^s_E)$ denote the length of the curve
  $\gamma^s_E$   and let $\gamma^s_E(x)=\phi_{-t_0}(\gamma^s(x'))$.
  It follows from the properties of the cone field that
  $$
  dist(\gamma^s_E(x))\ge e^{(\lambda_2-\epsilon)t_0}\ dist(\gamma^s_E(x')).
  $$
  We also remark that from
  the fact that $\gamma^s_E(x)$ is tangent to the cone field
  $C^{s,\,c}_E(x)$, the Euclidean diameter (the largest Euclidean
  distance between two points) of $\gamma^s_E(x)$ is bounded by
  $\frac{1}{\sqrt{c^2+1}}\,dist(\gamma^s_E(x))$ from below and by
  $l(\gamma^s_E(x))$ from above.

  Let $x_1$ be one of the end points of $\gamma^s_E(x)$ and
  $x_1'=\phi_{t_0}(x_1)$. We may apply the same arguments to $x_1$ and
  $x_1'$, and extend the curves $\gamma^s_E(x)$ and $\gamma^s_E(x')$ beyond
  $x_1$ and $x_1'$, unless either $x_1\in \partial B_r$ or
  $x_1'\in \partial U_a(x')$. This extension can be made keeping
  the $C^1$ smoothness of $\gamma$. Denote $\gamma^s_E(x)|[x,x_1]$ the segment
  on $\gamma^s_E(x)$ from $x$ to $x_1$. We have that
$$
\|x_1'- x'\|\le \,dist(\gamma^s_E(x')|[x',x_1'])\le
$$
$$
\le e^{-(\lambda_2-\epsilon)t_0} \,dist(\gamma^s_E(x)|[x,x_1])\le
e^{-(\lambda_2-\epsilon)t_0}  \|x-x_1\|\sqrt{c^2+1}.
$$
It follows that if $a\ge r\sqrt{c^2+1}\,e^{-(\lambda_2-\epsilon)t_0}$,
$x_1$ will always reach boundary of $B_r$ before $x_1'$ reaches the
boundary of $U_a(x')$. This proves that the stable curve can be
extended to the boundary of $B_r$.

The estimate $\|\phi_t(x)- \phi_t(x_1)\|\le e^{-(\lambda_2-\epsilon)t}$
follows directly from the earlier estimate of the arc-length. This
concludes our proof of the proposition for stable curves. The proof
for unstable curves follows from the same argument, but with
$C^{s,\,c}_E$ replaced by $C^{u,\,c}_E$ and $t$ by $-t$.
\end{proof}

In order to apply Proposition~\ref{stable-curve} to the local map, we
need to show that the restricted cone fields are not empty.
(see also the warning after Lemma \ref{local-cone})
\begin{lem}\label{nonemptycone}
  There exists $0<a\le \delta$ and $c>0$ such that for any
  $x=(s,u)\in \Sigma^{s,E}_+$ with $\|u\|\le a$, and $|s_2|\le 2\delta$,
  %K, more words
  we have
  $C_{E}^{u,\,c}(x)\ne \emptyset$. Similarly, for any
  $y\in \Sigma^{u,E}_+$  with $|s|\le a$ and $|u_2|\le 2\delta$,
  we have $C^{s,\,c}_{E}(y)\ne \emptyset$.
\end{lem}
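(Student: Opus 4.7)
The plan is to exhibit, at each admissible $x$, an explicit tangent vector on the energy surface that sits inside the strong unstable cone. The key structural observation is that on $\Sigma^{s}_+$ the coordinate $s_1=\delta$ is bounded \emph{away} from zero, while $u$ and $s_2$ are required to be small. This makes the coefficient $\partial_{u_1}N_k$ non-degenerate and lets me solve the single energy-surface constraint for the $u_1$-component of the candidate vector.

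First, I would use the explicit structure of $N_k$ from Corollary~\ref{normal-hamiltonian-form} to derive, at $x=(\delta,s_2,u_1,u_2)$ with $|s_2|\le 2\delta$ and $\|u\|\le a$, the estimates
$$
\partial_{u_1}N_k(x)=\lambda_1\delta+O(\delta(\delta+a)),\qquad \partial_{u_2}N_k(x)=\lambda_2 s_2+O(\delta(\delta+a)).
$$
Both are routine: every monomial in $N_k$ beyond the quadratic part carries an extra factor among $s_1,s_2,u_1,u_2$, each bounded by $\delta+a$ at $x$, so their contributions to $\partial_{u_j}N_k$ are of size $O(\delta(\delta+a))$. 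Consequently, after fixing $\delta_0$ and then choosing $a\le\delta$ sufficiently small, one has $\partial_{u_1}N_k(x)\ge(\lambda_1/2)\delta>0$ and $|\partial_{u_2}N_k(x)|\le 3\lambda_2\delta$ uniformly for every admissible $x$.

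Next, I take the trial vector $v=(0,0,v_{u_1},1)\in T_x\R^4$. Membership in $T_xS_E$ reduces to the single scalar equation $\partial_{u_1}N_k(x)\,v_{u_1}+\partial_{u_2}N_k(x)=0$, whose unique solution satisfies $|v_{u_1}|\le 6\lambda_2/\lambda_1$ by the previous step. Since $v_{s_1}=v_{s_2}=0$, the cone inequality reduces to $c>|v_{u_1}|^2$, so choosing $c:=(6\lambda_2/\lambda_1)^2+1$ (independently of $x$) produces a vector in $C^{u,c}_E(x)$. The statement on $\Sigma^{u,E}_+$ follows by the symmetric argument: at $y=(s_1,s_2,\delta,u_2)$ with $\|s\|\le a$ and $|u_2|\le 2\delta$, I would use $v=(v_{s_1},1,0,0)$ with $v_{s_1}=-\partial_{s_2}N_k(y)/\partial_{s_1}N_k(y)$, and swapping the roles of $s$ and $u$ in the computation above gives the analogous bounds with the same constant $c$.

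The main subtlety is precisely the warning after Lemma~\ref{local-cone}: the restricted cones can a priori be empty. A brief inspection of the linearization shows this degeneracy occurs only where $\partial_{u_1}N_k$ (resp.\ $\partial_{s_1}N_k$) vanishes, i.e.\ near points with both $s_1$ and $u_1$ small. On $\Sigma^{s}_+$ the hypothesis $s_1=\delta$ excludes exactly this possibility and supplies the positive lower bound on the denominator; this is the only place where the hypothesis $x\in\Sigma^{s}_+$ enters. One caveat worth flagging is that $c$ is forced to be \emph{large} (of order $(\lambda_2/\lambda_1)^2$): narrowing the cone does not help, because the ratio of the two partial derivatives is intrinsic to the eigenvalue split.
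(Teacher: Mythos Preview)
Your argument is correct and is essentially the same as the paper's: both compute $\nabla N_k$ on $\Sigma^s_+$, use $s_1=\delta$ to get $\partial_{u_1}N_k\approx\lambda_1\delta$ bounded below while $\partial_{u_2}N_k=O(\delta)$, and conclude that the tangent hyperplane to $S_E$ meets the cone around the $u_2$-axis. The paper states this geometrically (the angle between $\nabla N$ and the cone axis is bounded away from zero), whereas you explicitly exhibit the tangent vector $(0,0,-\partial_{u_2}N_k/\partial_{u_1}N_k,1)$; the content is identical.
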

\begin{proof}
  We note that
  $$ \nabla N = (\lambda_1 u_1 + u O_1, \lambda_2 u_2 + u O_1,
  \lambda_1 s_1 + s O_1, \lambda_2 s_2 + s O_1),$$
  and hence for small $\|u\|$, $\nabla N \sim (0, 0,
  \lambda_1 s_1, \lambda_2 s_2)$. Since $|s_2|\le 2\delta =2|s_1|$ on
  %K, notation fix
  $\Sigma^s_+$, we have the angle between $\nabla N$ and $u_1$ axis
  is bounded from below. As a consequence, there exists $c>0$,
  such that $C^{u,\,c}$ has nonempty intersection with the tangent
  direction of $S_E$ (which is orthogonal to $\nabla N$). The lemma follows.
\end{proof}

\begin{proof}[Proof of Theorem~\ref{local-domain}]
  We will apply Proposition~\ref{stable-curve} to the pair $x_T^{++}$
  and $y_T^{++}$ which we will denote by $x_T$ and $y_T$ for short.
  Since the curve $\gamma^+$ is tangent to the $s_1$--axis, for $\delta$
  sufficiently small,
  we have $p^+=(\delta, s_2^+, 0, 0)$ satisfies $|s_2|\le \delta$. As
  $x_T\to p^+$, for sufficiently large $T$, we have $x_T=(s_1, s_2, u_1, u_2)$
  satisfy $|u|\le a/2$ and $|s_2|\le 3\delta/2$, where $a$ is as in
  Lemma~\ref{nonemptycone}. As a consequence, for each
  $x' \in U_{a/2}(x_T)\cap \Sigma^{s,E}_+$, we have
  $C_{E}^{u,\,c}(x')\ne \emptyset$. Similarly, we conclude that for each
  $y' \in U_{a/2}(y_T)\cap \Sigma^{u,E}_+$, $C_{E}^{s,\,c}(y')\ne \emptyset$.
  We may choose $T_0$ such that $a/2\ge \sqrt{c^2+1} r e^{-(\lambda_2-\epsilon)T_0}$.

Let $\bar{\gamma}$ be a stable curve containing $x_T$ extended to the boundary
of $B_{r/2}$. Denote the intersection with the boundary $\bar{x}_1$ and
$\bar{x_2}$ and let $\bar{y}_1$ and $\bar{y}_2$ be their images under $\phi_T$.
Let $\gamma_{13}'$ and $\gamma_{24}'$ be unstable curves containing $\bar{y}_1$
and $\bar{y}_2$ extended to the boundary of $B_r$, and let $\gamma_{13}$ and
$\gamma_{24}$ be their preimages under $\phi_T$. Pick $x_1$ and $x_3$ on
the curve $\gamma_{13}$ and let $y_1$ and $y_3$ be their images. It is
possible to pick $x_1$ and $x_3$ such that the segment $y_1 y_3$ on
$\gamma_{13}'$ extends beyond $B_{r/2}$. We now let $\gamma_{12}$ and
$\gamma_{34}$ be stable curves containing $x_1$ and $x_3$ that intersects
$\gamma_{24}$ at $x_2$ and $x_4$.

Note that by construction, $\bar{\gamma}$ and $\gamma_{13}'$ are extended
to the boundary of $B_{r/2}$. As the parameter $T\to \infty$, the limit of
the corresponding curves still extends to the boundary of $B_{r/2}$, which
contains $\gamma_s^+$ and $\gamma_u^+$ respectively. Moreover, by
Proposition~\ref{stable-curve}, the Hausdorff distance between
$\gamma_{12}$, $\gamma_{34}$ and $\bar{\gamma}$ is exponentially
small in $T$, hence they have a common limit. The same can be said
about $\gamma_{13}'$ and $\gamma_{24}'$.

There exists a Poincar\'e map taking $\gamma_{12}$ and $\gamma_{34}$
to curves on the section $\Sigma^s_+$; we abuse notation and still
call them $\gamma_{12}$ and $\gamma_{34}$. Similarly, $\gamma_{13}'$
and $\gamma_{24}'$ can also be mapped to the section $\Sigma^u_+$ by
a Poincar\'e map. These curves on the sections $\Sigma^s_+$ and
$\Sigma^u_+$ completely determines the rectangle
$R^{++}(T)\subset \Sigma^{s,E(T)}_+$. Note that the limiting properties
described in the previous paragraph is unaffected by the Poincar\'e map.
This concludes the proof of Theorem \ref{local-domain}.
\end{proof}

%Theorem \ref{local-domain} implies
%K, I moved it to this section

By construction curves $\gm_{12}$ and $\gm_{34}$ can be
selected as stable and $\gm_{14}$ and $\gm_{23}$ ---
as unstable. It leads to the following
\begin{cor}\label{trans-rect}
  There exists $T_0>0$ such that the following hold.
  \begin{enumerate}
  \item For $T\ge T_0$, $\Phg^+ \circ \Phl^{++}(R^{++}(T))$
  intersects $R^{++}(T)$ transversally.
       %K, rephrase
       %To be more specific,
       Moreover, the  images of $\gamma_{13}$ and $\gamma_{24}$ intersect
       $\gamma_{12}$ and $\gamma_{34}$ transversally, and the images of
       $\gamma_{12}$ and $\gamma_{34}$ does not intersect $R^{++}(T)$.
  \item For $T \ge T_0$, $\Phg^-\circ \Phl^{--}(R^{--}(T))$ intersects
  $R^{--}(T)$ transversally.
  \item For $T, T' \ge T_0$ such that $R^{+-}(T)$ and $R^{-+}(T')$ are
  on the same energy surface: $\Phg^- \circ \Phl^{+-} (R^{+-}(T))$
  intersect $R^{-+}(T')$ transversally, and
  $\Phg^+\circ \Phl^{-+}(R^{-+}(T'))$ intersect $R^{+-}(T)$ transversally.
  \end{enumerate}
\end{cor}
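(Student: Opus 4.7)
The plan is to view $\Phg^+\circ\Phl^{++}$ restricted to $\cS_{E(T)}$ as a hyperbolic map whose unique interior fixed point is $\gamma^+_{E(T)}\cap\Sigma^{s,E(T)}_+$ (supplied by Theorem~\ref{periodic}), and to deduce the horseshoe-type transversal intersection from the alignment of the sides of $R^{++}(T)$ with the stable/unstable cone fields built in the proof of Theorem~\ref{local-domain}.

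First I would record the geometric features of $R^{++}(T)$: by construction, $\gamma_{12}$ and $\gamma_{34}$ are stable curves tangent to the restricted stable cone $C^{s,c}_{E(T)}$, while $\gamma_{13}$ and $\gamma_{24}$ are preimages under $\phi_T$ (inside $B_r$) of unstable curves $\gamma'_{13},\gamma'_{24}\subset \Sigma^{u,E(T)}_+$ extending to $\partial B_r$. Theorem~\ref{periodic} then gives a fixed point $z_T$ of $\Phg^+\circ\Phl^{++}|_{\cS_{E(T)}}$ in the interior of $R^{++}(T)$ (it is an $O(e^{-(\lambda_1-\epsilon)T})$--perturbation of $x^{++}_T$).

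The main step is to show that the images of the unstable sides $\gamma_{13},\gamma_{24}$ cross the stable sides $\gamma_{12},\gamma_{34}$ transversally. Applying $\Phl^{++}$ produces the genuine unstable curves $\gamma'_{13},\gamma'_{24}$, whose tangent directions converge to $T^{uu}(q^+)$ as $T\to\infty$ by Theorem~\ref{tangency} and Proposition~\ref{bvp}. Assumption A4a then sends $T^{uu}(q^+)$ under $D\Phg^+(q^+)|_{TS_0}$ to a direction transverse to $T^{ss}(p^+)$; for $T$ large this transversality persists in a neighborhood of $p^+$, so $\Phg^+\circ\Phl^{++}(\gamma_{13})$ and $\Phg^+\circ\Phl^{++}(\gamma_{24})$ are tangent to a cone transverse to $C^{s,c}_{E(T)}$, and in particular they cross $\gamma_{12},\gamma_{34}$ transversally. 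The lower bound on expansion of $|u_1|$ in Proposition~\ref{bvp} guarantees that these images actually span $R^{++}(T)$ in the unstable direction.

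To see that $\Phg^+\circ\Phl^{++}(\gamma_{12})$ and $\Phg^+\circ\Phl^{++}(\gamma_{34})$ miss $R^{++}(T)$, I would use that these stable curves sit at a definite unstable-direction distance from the stable manifold of $z_T$ (bounded below by the width of $R^{++}(T)$ in the unstable direction); hyperbolicity of the composition magnifies this distance --- combining the lower-bound expansion from Proposition~\ref{bvp} with the bounded derivative of $\Phg^+$ --- so the images are pushed past the unstable sides of the rectangle. Parts (2) and (3) follow from the same template: for (2) replace the composition with $\Phg^-\circ\Phl^{--}$ and the fixed point with $\gamma^-_E\cap\Sigma^{s,E}_-$; for (3) consider the fourfold composition $\Phg^-\circ\Phl^{+-}\circ\Phg^+\circ\Phl^{-+}$, whose fixed point on $\cS_E$ for $E<0$ is $\gamma^c_E$ from Theorem~\ref{periodic}, and apply A4a at both $p^+$ and $p^-$. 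The main technical obstacle is that the sides of $R^{++}(T)$ are only tangent to the true strong stable/unstable directions up to an exponentially small error in $T$; checking that the composition with $\Phg^\pm$ still converts this into genuine transversality is precisely the content of the restriction-to-$TS_0$ version of A4a, which is the piece of the argument that requires the most care.
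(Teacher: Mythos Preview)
Your proposal is circular. You invoke Theorem~\ref{periodic} to supply a fixed point $z_T$ of $\Phg^+\circ\Phl^{++}$ inside $R^{++}(T)$, but in the paper the existence of that fixed point (Proposition~\ref{fixedpt}) is obtained from the isolating-block property of $R^{++}(T)$, and that isolating-block property \emph{is} the content of Corollary~\ref{trans-rect}. So you cannot use the fixed point here; the corollary must come first and feed into Section~\ref{NHIC-isolating-block}, not the other way around.

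Fortunately the fixed point is not needed. The paper derives the corollary directly from the construction in the proof of Theorem~\ref{local-domain}: the sides $\gamma_{12},\gamma_{34}$ are built as stable curves converging (Hausdorff) to $T_s^+$, while $\gamma'_{13},\gamma'_{24}$ are unstable curves converging to $T_u^+$. Hence $\Phg^+\circ\Phl^{++}(R^{++}(T))$ is a thin strip along $\Phg^+(T_u^+)$, and $R^{++}(T)$ is a thin strip along $T_s^+$; assumption~A4a gives $D\Phg^+(q^+)T^{uu}(q^+)\pitchfork T^{ss}(p^+)$, so for large $T$ the two strips cross transversally near $p^+$. Your ``main step'' already contains this argument and is fine. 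What needs to be redone is the argument that the images of $\gamma_{12},\gamma_{34}$ miss $R^{++}(T)$: your claim that these curves sit ``at a definite unstable-direction distance from the stable manifold of $z_T$'' is false --- by Theorem~\ref{local-domain}(2) they are exponentially close to $T_s^+\subset W^s$. The correct observation is that by construction their images under $\Phl^{++}$ are exponentially short curves near $y_1,y_3$, which were chosen so that the segment $y_1y_3$ on $\gamma'_{13}$ extends beyond $B_{r/2}$; thus $\Phg^+(y_1),\Phg^+(y_3)$ lie at a definite distance from $p^+$ along the transverse direction $\Phg^+(T_u^+)$ and hence, for $T$ large, outside the exponentially thin strip $R^{++}(T)$.
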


\brm \label{different-T}
Later we show that, for fixed $T$, the value $T'$ satisfying condition in the third item is unique.
\erm

\section{Existence of shadowing period orbits
%K, add
and the proof of Theorem \ref{periodic}}

%Z, added a subsection for isolation blocks. 
\subsection{Conley-McGehee isolation blocks}

We will use Theorem~\ref{local-domain} to prove Theorem~\ref{periodic}.
We apply the construction in the previous section to all four local
maps in the neighborhoods of the points $p^\pm$ and $q^\pm$, and obtain
the corresponding rectangles.

For the map $\Phg^+\circ \Phl^{++}|S_{E(T)}$, the rectangle $R^{++}(T)$ is
an isolation block in the sense of Conley and McGehee (\cite{McG73}), defined as follows. 

A rectangle $R = I_1\times I_2\subset \R^d\times \R^k$,
$I_1=\{\|x_1\|\le 1\}$, $I_2=\{\|x_2\| \le 1\}$ is called
{\it an isolation block} for the $C^1$ diffeomorphism $\Phi$, if the following hold:
\begin{enumerate}
\item The projection of $\Phi(R)$ to the first component covers $I_1$.
\item $\Phi|I_1 \times \partial I_2$ is homotopically equivalent to
identity restricted on $I_1 \times (\R^k\setminus \ int\ I_2)$.
\end{enumerate}
If $R$ is an isolation block of $\Phi$, then the set
$$W^+=\{x\in R: \Phi^k(x)\in R,\ k\ge 0\}\quad
 \text{ (resp. }
W^-=\{x\in R: \Phi^{-k}(x)\in R,\ k\ge 0\})
$$
projects onto $I_1$ (resp. onto $I_2$) (see \cite{McG73}). If some
additional cone conditions are satisfied, then $W^+$ and $W^-$ are
in fact $C^1$ graphs. Note that in this case, $W^+\cap W^-$ is the unique
fixed point of $\Phi$ on $R$.

As usual, we denote by $C^{u,c}(x)=\{c\|v_1\| \le \|v_2\|\}$ the unstable cone at $x$.
We denote by $\pi C^{u,c}(x)$ the set $x+ C^{u,c}(x)$, which corresponds to
the projection
of the cone $C^{u,c}(x)$ from the tangent space to the base set. The stable
cones are defined similarly.  Let $U\subset \R^d\times \R^k$ be an open set and
$\Phi:U\to \R^d\times \R^k$ a $C^1$ map.
\begin{enumerate}
\item[C1.] $D\Phi$ preserves the cone field $C^{u,c}(x)$, and there exists
$\Lambda>1$ such that $\|D\Phi(v)\|\ge \Lambda \|v\|$ for any $v\in C^{u,c}(x)$.
\item[C2.] $\Phi$ preserves the projected restricted cone field $\pi C^{u,c}$, i.e.,
for any $x\in U$, 
$$\Phi(U\cap \pi C^{u,c}(x)) \subset C^{u,c}(\Phi(x))\cap \Phi(U).$$
\item[C3.] If $y\in \pi C^{u,c}(x)\cap U$, then $\|\Phi(y)-\Phi(x)\| \ge \Lambda \|y-x\|$.
\end{enumerate}
The unstable cone condition guarantees that any forward invariant set is contained in
a Lipschitz graph.
\begin{prop}[See \cite{McG73}]\label{Lipschitz}
	Assume that $\Phi$ and $U$ satisfies C1-C3, then any forward invariant set
	$W\subset U$ is contained in a Lipschitz graph over $\R^k$ (the stable direction).
\end{prop}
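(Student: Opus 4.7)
The plan is to run the classical Conley--McGehee cone argument: the projected cone invariance C2 together with the uniform expansion C3 make it impossible for two distinct forward orbits to stay in $U$ if their initial difference lies in the projected unstable cone, and this translates into a Lipschitz graph condition on any forward invariant set.

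First, suppose for contradiction that there are distinct $x,y\in W$ with $y\in \pi C^{u,c}(x)\cap U$. By forward invariance, $\Phi^n(x),\Phi^n(y)\in W\subset U$ for every $n\ge 0$, so C2 can be iterated: assuming $\Phi^n(y)\in \pi C^{u,c}(\Phi^n(x))\cap U$, C2 places $\Phi^{n+1}(y)$ into $\pi C^{u,c}(\Phi^{n+1}(x))$, while forward invariance keeps it in $U$. Hence C3 applies at each step and iteration yields
\[
\|\Phi^n(y)-\Phi^n(x)\|\ \ge\ \Lambda^n\,\|y-x\|.
\]
Since $W\subset U$ is bounded (in the applications it sits inside the isolation rectangle $R$), the left-hand side is uniformly bounded in $n$, contradicting $\Lambda>1$ and $y\ne x$. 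Consequently every pair of distinct points of $W$ has difference lying outside $C^{u,c}$.

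Next I convert this cone-exclusion into the desired Lipschitz graph. The complement of the cone $C^{u,c}(x)=\{c\|v_1\|\le \|v_2\|\}$ is characterized by the one-sided bound $\|v_2\| < c\|v_1\|$, so the difference of any two points of $W$ has one component bounded by $c$ times the norm of the other. It follows that the projection of $W$ onto $\R^k$ is one-to-one with inverse of Lipschitz constant $1/c$, which realizes $W$ as a subset of a Lipschitz graph over $\R^k$. The only substantive step is the iteration above; keeping both orbits inside $U$ at every stage is exactly what the hypotheses $W\subset U$ and $\Phi(W)\subset W$ supply, and the passage from a cone-excluded difference to a Lipschitz graph is routine once the uniform constant $c$ is fixed.
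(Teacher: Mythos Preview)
Your argument is correct and follows exactly the paper's approach: assume two points of $W$ differ inside the projected unstable cone, iterate C2 and C3 to force unbounded separation, contradict containment in $U$, and then read off the Lipschitz graph condition from the complementary cone. Your explicit remark that one needs $U$ (or $W$) bounded is a fair observation; the paper's proof makes the same implicit assumption when it says the growth ``contradicts $\Phi^k(x),\Phi^k(y)\in U$''.
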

\begin{proof}
We claim that any $x, y\in W$ must satisfy $y\notin \pi C^{u,c}(x)$. Assume otherwise,
then we have $\Phi^k(y) \in \pi C^{u,c}(\Phi^k(x))$ for all $k \ge 0$, and hence
$$\|\Phi^k(y) - \Phi^k(x)\| \ge \Lambda^k \|y-x\|.$$
But this contradicts with $\Phi^k(x), \Phi^k(y) \in U$ for all $k \ge 0$.
It follows that $y \in \pi C^{s,1/c}(x)\cap U$, which implies the Lipschitz condition.
\end{proof}
Similarly, we can define the conditions C1-C3 for the inverse map and the stable cone,
and refer to them as ``stable C1-C3'' conditions.  Note that if $\Phi$ and $U$ satisfies
both the isolation block condition and the stable/unstable cone conditions, then $W^+$
and $W^-$ are transversal Lipschitz graphs. In particular, there exists a unique
intersection, which is the unique fixed point of $\Phi$ on $R$.  We summarize as follows. 

\begin{cor}
  Assume that $\Phi$ and $U$ satisfies the isolation block condition, and that $\Phi$ and $U$ (resp. $\Phi^{-1}$ and $U \cap \Phi(U)$) satisfies the unstable (resp. stable) conditions C1-C3. Then $\Phi$ has a unique fixed point in $U$. 
\end{cor}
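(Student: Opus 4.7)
The plan is to combine the isolation block condition with the unstable/stable cone conditions to realize $W^+$ and $W^-$ as two transversal Lipschitz graphs whose unique common point must be the sought fixed point. First, I would invoke the isolation block condition together with the theorem of Conley--McGehee already cited: this gives that $W^+=\{x\in R:\Phi^k(x)\in R, k\ge 0\}$ projects onto $I_1$ and $W^-=\{x\in R:\Phi^{-k}(x)\in R, k\ge 0\}$ projects onto $I_2$; in particular both sets are non-empty and ``stretch across'' $R$ in the expanding direction for the corresponding iteration.

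Next I apply Proposition~\ref{Lipschitz} to $(\Phi,U)$ with the unstable cones C1--C3 to conclude that $W^+$ lies on a Lipschitz graph $\{(F(x_2),x_2)\}$ over the stable factor $\R^k$, with Lipschitz constant at most $1/c$. By symmetry the proposition applied to $(\Phi^{-1}, U\cap\Phi(U))$ with the stable cone conditions gives that $W^-$ lies on a Lipschitz graph $\{(x_1,G(x_1))\}$ over the unstable factor $\R^d$, again with Lipschitz constant at most $1/c$. A point $x=(x_1,x_2)\in W^+\cap W^-$ then satisfies $x_1=F(x_2)$ and $x_2=G(F(x_2))$, and since $\mathrm{Lip}(G\circ F)\le 1/c^2<1$ (assuming $c>1$, which is implicit in the cone setup), the map $G\circ F$ is a contraction on $I_2$ and so admits a unique fixed point $x_2^*$; setting $x^*=(F(x_2^*),x_2^*)$ gives the unique candidate for an intersection.

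Existence of such an intersection inside $R$ is where I need the Step 1 ``projects onto'' property: because $W^+$ projects onto $I_1$ (so $F$ is defined on a set whose image in the unstable direction covers $I_1$) and $W^-$ projects onto $I_2$, the contraction $G\circ F$ does send $I_2$ into $I_2$ and its fixed point $x_2^*$ gives a genuine point of $R$ lying on both Lipschitz graphs. Finally, $W^+\cap W^-$ is $\Phi$-invariant: for any $x$ in the intersection, $\Phi^k(x)\in R$ for all $k\in\Z$, so $\Phi(x)\in W^+\cap W^-$ as well. Since this set consists of the single point $x^*$, one concludes $\Phi(x^*)=x^*$, and any fixed point of $\Phi$ in $U$ automatically lies in $W^+\cap W^-$, giving uniqueness.

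The main obstacle, such as it is, is the bookkeeping in the third step: making sure that the ambient Lipschitz graphs containing $W^+$ and $W^-$ restrict to genuine graphs over $I_2$ and $I_1$ in $R$ (so that the Banach argument is carried out on the correct domain), rather than on an a priori larger neighborhood where the cone conditions might fail. Once one uses the ``projects onto'' property carefully to show that the Lipschitz graphs traverse $R$ from boundary to boundary in the correct direction, transversality and the contraction bound $1/c^2<1$ close the argument. Everything else is a direct application of Proposition~\ref{Lipschitz} and the Conley--McGehee statement, so the corollary really amounts to combining pieces already in place.
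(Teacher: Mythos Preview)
Your proposal is correct and follows essentially the same approach as the paper. The paper's own argument for this corollary is just the sentence preceding it: ``if $\Phi$ and $U$ satisfies both the isolation block condition and the stable/unstable cone conditions, then $W^+$ and $W^-$ are transversal Lipschitz graphs. In particular, there exists a unique intersection, which is the unique fixed point of $\Phi$ on $R$.'' Your write-up simply fleshes this out, making the transversal-intersection step explicit via the contraction $G\circ F$ and noting why the intersection point is fixed; the only ingredient you add beyond the paper is the explicit hypothesis $c>1$ to guarantee $\mathrm{Lip}(G\circ F)<1$, which the paper leaves implicit (and which holds in all the applications, where the cones are taken narrow).
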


\subsection{Single leaf cylinder}

We now apply
the isolation block construction to the maps and rectangles obtained in Corollary~\ref{trans-rect}.

\begin{prop}\label{fixedpt}
  There exists $T_0>0$ such that the following hold.
  \begin{itemize}
  \item For $T\ge T_0$, $\Phg^+\circ \Phl^{++}$ has a unique fixed point $p^+(T)$
  on $\Sigma_s^+\cap R^{++}(T)$;
  \item For $T\ge T_0$, $\Phg^-\circ \Phl^{--}$ has a unique fixed point $p^-(T)$
  on $\Sigma_s^-\cap R^{--}(T)$;
  \item  For $T, T' \ge T_0$ such that $R^{+-}(T)$ and $R^{-+}(T')$ are on
  the same energy surface: $\Phg^+ \circ \Phl^{-+}\circ \Phg^-\circ \Phl^{+-}$
  has a unique fixed point $p^c(T)$ on $R^{+-}(T)\cap (\Phg^-\circ \Phl^{+-})^{-1}(R^{-+}(T'))$.
  \end{itemize}
\end{prop}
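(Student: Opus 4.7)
The plan is to verify, for each of the three composite Poincar\'e maps, the hypotheses of the Conley--McGehee isolation block framework set up in the preceding subsection (isolation block condition plus the unstable cone conditions C1--C3 for the map and the stable versions for its inverse), and then invoke the corollary of Proposition~\ref{Lipschitz} to conclude existence and uniqueness of the fixed point. I will carry out the argument in detail for $\Phi := \Phg^+\circ\Phl^{++}$ on $R=R^{++}(T)\subset\Sigma^{s,E(T)}_+$; the other two cases are completely analogous after relabeling the sections and homoclinics.

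\textbf{Step 1 (isolation block).} Parametrize $R$ so that the coordinate $x_1$ runs along the unstable boundary curves $\gamma_{13},\gamma_{24}$ (the "$I_1$" direction) and $x_2$ runs along the stable boundary curves $\gamma_{12},\gamma_{34}$ (the "$I_2$" direction). Corollary~\ref{trans-rect}(1) states exactly that $\Phi(\gamma_{13})$ and $\Phi(\gamma_{24})$ cross $R$ transversally, while $\Phi(\gamma_{12})$ and $\Phi(\gamma_{34})$ miss $R$ entirely. This is precisely the condition that the $x_1$-projection of $\Phi(R)$ covers $I_1$ and that the boundary map on $I_1\times\partial I_2$ is homotopic to the identity in the complement of $\text{int}\,I_2$, so $R$ is an isolation block for $\Phi$.

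\textbf{Step 2 (cone conditions).} Choose a large constant $c$ and work with the unstable cone field $C^{u,c}$ from Section~\ref{sec:local}; define on $\Sigma^{s,E}_+$ the projected cones $\pi C^{u,c}$ by pushing forward along $\nabla N$-transverse sections. By Lemma~\ref{local-cone} the local flow preserves $C^{u,c}$ and expands vectors in it by a factor $e^{(\lambda_2-\epsilon)t}$; since for points in $R^{++}(T)$ the traversal time near $0$ is at least $T$, the local factor is at least $e^{(\lambda_2-\epsilon)T}$. The global map $\Phg^+$ is a fixed diffeomorphism independent of $T$, so its derivative $D\Phg^+(q^+)$ has bounded norm and distortion. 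Assumption A4a (and the non-degeneracy statement A4b) guarantees that $D\Phg^+$ sends the strong unstable direction at $q^+$ to a direction transverse to the strong stable direction at $p^+$, so by taking $c$ large enough we can ensure that $D\Phg^+(C^{u,c})\subset C^{u,c}$ near $p^+$. Thus C1 holds with expansion constant $\Lambda\ge c_0 e^{(\lambda_2-\epsilon)T}$ for a $T$-independent $c_0>0$, and for $T\ge T_0$ large we have $\Lambda>1$ with margin. C2 follows from C1 after integrating along curves tangent to $\pi C^{u,c}$, and C3 follows from C2 by mean-value along such curves. Running the same argument for $\Phi^{-1}$ with the strong stable cone gives the stable versions of C1--C3.

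\textbf{Step 3 (conclusion and the remaining cases).} The corollary after Proposition~\ref{Lipschitz} now gives a unique fixed point $p^+(T)\in R^{++}(T)$ of $\Phi$, which is the desired $p^+(T)$. For $\Phg^-\circ\Phl^{--}$ on $R^{--}(T)$ the identical argument applies with the superscripts flipped; Corollary~\ref{trans-rect}(2) provides the isolation property and A4a--b for $\gamma^-$ provides the cone preservation. For the third map $\Phi^c := \Phg^+\circ\Phl^{-+}\circ\Phg^-\circ\Phl^{+-}$, one first intersects the rectangle $R^{+-}(T)$ with $(\Phg^-\circ\Phl^{+-})^{-1}(R^{-+}(T'))$; by Corollary~\ref{trans-rect}(3) this intersection is itself a topological rectangle whose stable/unstable boundaries are transverse to the unstable/stable images under $\Phi^c$, yielding the isolation block property. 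The cone conditions follow as before, with the hyperbolic expansion now accumulated over two passes near the origin (times $T$ and $T'$), so the argument is only strengthened. The corollary again gives a unique fixed point $p^c(T)$.

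\textbf{Main obstacle.} The technical heart of the argument is Step 2: one must combine the $T$-dependent local hyperbolicity from Shil'nikov's estimates (Proposition~\ref{bvp}) with the $T$-independent global maps, and ensure that the global maps do not destroy the cone fields. This is exactly where assumptions A4a and A4b enter; verifying that they translate into the clean cone inclusion $D\Phg^\pm(C^{u,c})\subset C^{u,c}$ near $p^\pm$, uniformly in the energy $E=E(T)$ as $T\to\infty$, requires some care with the choice of $c$ and the local coordinate systems on the sections $\Sigma^{u,s}_\pm$.
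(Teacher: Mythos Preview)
Your plan matches the paper's proof essentially step for step: isolation block from Corollary~\ref{trans-rect}, cone conditions C1--C3 for the composite map (the paper packages this as Lemma~\ref{local-global-cone}), then invoke the fixed-point corollary.

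One imprecision in Step~2 is worth fixing. You write that by taking $c$ large enough one can ensure $D\Phg^+(C^{u,c})\subset C^{u,c}$ near $p^+$. This is not what A4a gives you: A4a only says $D\Phg^+$ sends $T^{uu}(q^+)$ to a direction transverse to $T^{ss}(p^+)$, not to something close to $T^{uu}(p^+)$, so there is no reason the global map alone preserves any fixed-width cone. The correct order (which the paper carries out via Lemma~\ref{cone-width} inside the proof of Lemma~\ref{local-global-cone}) is: first the local map $\Phl^{++}$ contracts $C^{u,c}$ to an exponentially thin cone $C^{u,\beta_T}$ about $T^{uu}(q^+)$ with $\beta_T\to 0$ as $T\to\infty$; then A4a guarantees that for some fixed $c$ and some $\beta>0$ one has $D\Phg^+(C^{u,\beta})\subset C^{u,c}$, and for $T\ge T_0$ large one has $\beta_T\le\beta$. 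Your ``Main obstacle'' paragraph shows you see this is the delicate point; just make sure the write-up reflects the correct two-step mechanism rather than asserting invariance of a single cone under the global map.
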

%K, rephrasing
Note that in the third case of Proposition~\ref{fixedpt},  it is possible
to choose $T'$ depending on $T$ such that the rectangles are on the same
energy surface, if $T$ is large enough. Moreover, as in remark \ref{different-T}
we later show that such $T'=T'(T)$ is unique.
As a consequence, the fixed point $p^c(T)$ exists for all sufficiently large $T$.

Each of the fixed points $p^+(T)$, $p^-(T)$ and $p^c(T)$ corresponds to a periodic
orbit of the Hamiltonian flow. In addition,
%K,it is easy to miss where notations has changed so I italic it
{\it the energy of the orbits are monotone in $T$, and hence we can
switch to $E$ as a parameter.}

\begin{prop}\label{monotone}
The curves $(p^+(T))_{T\ge T_0}$, $(p^-(T))_{T\ge T_0}$ and $(p^{\,c}(T))_{T\ge T_0}$
are $C^1$ graphs over the $u_1$ direction with uniformly bounded derivatives.
Moreover, the energy $E(p^+(T))$, $E(p^-(T))$ and $E(p^{\,c}(T))$ are monotone
functions of $T$.
\end{prop}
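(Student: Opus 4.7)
The plan is to parametrize each of the three families of fixed points by the Shil'nikov travel time $T$ and extract the required smoothness, graph and monotonicity properties from Proposition~\ref{bvp}, Theorem~\ref{tangency}, and the local normal form of Corollary~\ref{normal-hamiltonian-form}. The Shil'nikov solution depends $C^1$ jointly on $T$ and the boundary data, and so do the global maps; since by Proposition~\ref{fixedpt} the fixed-point equations have a unique solution in each isolation block, the implicit function theorem gives $T\mapsto p^+(T)\in C^1$, and similarly for $p^-(T)$ and $p^c(T)$. In particular $E(T):=H(p^+(T))$ is $C^1$.

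On $\Sigma^s_+$ we have $s_1\equiv\delta$, and the normal form yields
\[
E=\lambda_1\delta\,u_1+\lambda_2 s_2 u_2+O_3(\delta,s_2,u_1,u_2).
\]
Along the curve $(p^+(T))_{T\ge T_0}$, Theorem~\ref{tangency} gives $|u_2|\le C|u_1|^\alpha$ with $\alpha=(\lambda_2-2\eps)/(\lambda_1+\eps)>1$, while $s_2$ stays bounded (it converges to the $s_2$-coordinate of $p^+$). Substituting, $E=\lambda_1\delta\,u_1+O(|u_1|^\alpha)$, so $u_1\mapsto E$ is a strictly monotone $C^1$ map for small $u_1$. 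Uniqueness of the fixed point in the isolation block forces distinct values of $T$ to give distinct energies, so the continuous map $T\mapsto E(T)$ is injective and therefore strictly monotone; this gives the energy-monotonicity claim and also shows that $u_1(p^+(T))$ is strictly monotone in $T$, so the curve can be reparametrized by $u_1$.

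To obtain the uniform derivative bound, differentiate the tangency estimate in $u_1$ (using the $C^1$ dependence above) to get $|du_2/du_1|\le C|u_1|^{\alpha-1}$, which is uniformly bounded and tends to zero as $T\to\infty$. For the $s_2$ coordinate, the curve on $\Sigma^s_+$ is the image under $\Phg^+$ of a curve on $\Sigma^u_+$ whose dominant $T$-variation is in the $s_1$ direction, so $ds_2/du_1$ is controlled by $D\Phg^+(q^+)$; assumption A4 ensures no degeneracy. This establishes the $C^1$-graph property with uniformly bounded derivative.

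The analysis for $p^-(T)$ is identical after sign reversals. For $p^c(T)$ the orbit has two local pieces, of times $T$ and $T'$; the common-energy condition determines $T'=T'(T)$ uniquely as a $C^1$ function (since the derivative of the energy with respect to the second segment time is non-zero by the same computation), which also supplies the uniqueness claim of Remark~\ref{different-T}, and the arguments above go through. The main technical difficulty is uniformity of the constants as $T\to\infty$; this is supplied by the uniform Shil'nikov estimates of Proposition~\ref{bvp} and by the strict inequality $\alpha>1$ ensuring that higher-order corrections decay strictly faster than the leading linear term.
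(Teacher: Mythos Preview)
Your overall strategy---Shil'nikov $C^1$ dependence for smoothness, the normal form of Corollary~\ref{normal-hamiltonian-form} for the energy expansion, and Theorem~\ref{tangency} for the dominance of the $u_1$ component---matches the paper's. The energy computation $E=\lambda_1\delta\,u_1+O(|u_1|^\alpha)$ is correct and is essentially how the paper establishes monotonicity of $E$ in $u_1$.

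However, there is a genuine gap in how you obtain the \emph{uniform derivative bound}. You write ``differentiate the tangency estimate in $u_1$ \ldots\ to get $|du_2/du_1|\le C|u_1|^{\alpha-1}$''. This step is invalid: a pointwise inequality $|u_2(T)|\le C|u_1(T)|^\alpha$ along a $C^1$ curve does \emph{not} imply a bound on $du_2/du_1$. (Think of $u_2=u_1^2\sin(1/u_1)$.) The same objection applies to your control of $ds_2/du_1$: knowing that the $s_1$ component dominates pointwise on $\Sigma^u_+$ says nothing about the tangent direction of the curve there, so pushing forward by $D\Phg^+$ does not give what you claim. Without these derivative bounds, you have neither the $C^1$-graph-over-$u_1$ statement nor the monotonicity argument (which needs $|du_2/du_1|$ small to control the $s_2\,du_2$ term in $dE$).

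The paper closes this gap by an entirely different mechanism: it observes that the set $\{p^+(T)\}_{T\ge T_0}$ is both forward and backward invariant under $\Phg^+\circ\Phl^{++}$, and that by Lemma~\ref{local-global-cone} this map preserves the (unrestricted) strong unstable and strong stable cone fields. Proposition~\ref{Lipschitz} then forces the invariant set to lie in a Lipschitz graph over $(s_1,u_1,u_2)$ and simultaneously over $(s_1,u_1,s_2)$, hence over $(s_1,u_1)$; since $s_1\equiv\delta$, it is Lipschitz over $u_1$ with a uniform constant. The tangent bound $|du_2/du_1|<\tau$ needed for monotonicity of $E$ is then obtained by a separate cone argument (Lemma~\ref{cone-width} applied with $v_2=u_2$), not by differentiating Theorem~\ref{tangency}. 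You should replace your differentiation step with this cone-invariance argument; the rest of your plan then goes through.
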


We now prove Theorem~\ref{periodic} assuming Propositions~\ref{fixedpt} and \ref{monotone}.

\begin{proof}[Proof of Theorem~\ref{periodic}]
Note that due to Proposition~\ref{bvp}, the sign of $s_1$ and $u_1$ does not change
in the boundary value problem. It follows that the energies of $p^\pm(T)$ are positive,
and the energy of $p^{\,c}(T)$ is negative. Reparametrize by energy, we obtain families
of  fixed points $(p^\pm(E))_{0<E\le E_0}$ and $(p^{\,c}(E))_{-E_0 \le E <0}$, where
$$E_0 = \min\{E(p^+(T_0)), E(p^-(T_0)), -E(p^{\,c}(T_0))\}.$$
 We now denote the full orbits of these fixed points $\gamma^+_E$, $\gamma^-_E$ and
 $\gamma^c_E$, and the theorem follows.
\end{proof}

To prove Proposition~\ref{fixedpt}, we notice that the rectangle $R^{++}(T)$ has
$C^1$ sides, and there exists a $C^1$ change of coordinates turning it to a standard
rectangle. It's easy to see that the isolation block conditions are satisfied for
the following maps and rectangles:
%K, on a separate line it looks better
$$
\Phg^+\circ \Phl^{++}\quad \text{and}\quad R^{++}(T),
\qquad
\Phg^-\circ \Phl^{--}\quad \text{and}\quad R^{--}(T),$$
$$ \Phg^+\circ \Phl^{-+} \circ \Phg^- \circ \Phl^{+-}
\quad \text{ and }\quad
(\Phg^- \circ \Phl^{+-})^{-1}R^{-+}(T)\cap R^{+-}(T).
$$
   It suffices to prove the stable and unstable conditions C1-C3 for
   the corresponding return map and rectangles. We will only prove
   the C1-C3 conditions conditions for the unstable cone $C_{E}^{u,\,c}$,
   the map $\Phg^+\circ \Phl^{++}$ and the rectangle $R^{++}(T)$; the proof
   for the other cases  can be obtained by making obvious changes to the case covered.

\begin{lem}\label{local-global-cone}
  There exists $T_0>0$ and $c>0$ such that the following hold. Assume that
  $U\subset \Sigma^s_+\cap B_r$ is a connected open set on which the local
  map $\Phl^{++}$ is defined, and for each $x\in U$,
  $$ \inf\{ t\ge 0: \varphi_t(x)\in \Sigma^u_+\} \ge T_0.$$
 Then the map $D(\Phg^+\circ \Phl^{++})$ preserves the
 %K, want to emphasise
 non-empty cone field
 $C^{u,\,c}$, and the inverse $D(\Phg^+\circ \Phl^{++})^{-1}$ preserves
 the non-empty $C^{s,\,c}$. Moreover, the projected cones
 %K, changed double u and double s to single ones
 %Z, there is a distinction, later single u and s are used to represent weak stable/unstable cones.
 $\pi C^{u,\,c}\cap U$ and $\pi C^{s,\,c}\cap V$ are preserved by
 $\Phg^+\circ \Phl^{++}$ and its inverse, where $V=\Phg^+\circ \Phl^{++}(U)$.

 %K, moved former corollary
 The same set of conclusions hold for the restricted version. Namely,
 we can replace $C^{u,\,c}$ and $C^{s,\,c}$
 with $C^{u,\,c}_E$ and $C^{s,\,c}_E$, and $U$  with $U \cap S_E$.
\end{lem}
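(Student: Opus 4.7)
The plan is to decompose $\Phi := \Phg^+ \circ \Phl^{++}$ into its local and global factors and handle them separately. The map $\Phg^+$ is a fixed $C^1$ diffeomorphism between fixed neighborhoods of $q^+$ and $p^+$, so $D\Phg^+$ has operator norm bounded by some $M = M(H^s) < \infty$ on the relevant compact sets, with $M$ independent of $T_0$ and of the cone parameter $c$. Thus every $T$-dependent estimate comes through the local factor $\Phl^{++}$.

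For $\Phl^{++}$ we use the normal form of Theorem~\ref{normal-form} together with Lemma~\ref{local-cone}. By hypothesis the flow time from $x\in U$ to $\Sigma^u_+$ exceeds $T_0$, so along the entire orbit $D\phi_t$ preserves $C^{u,c}$ and expands vectors in it by $e^{(\lambda_2-\eps)t}$. The derivative $D\Phl^{++}$ differs from $D\phi_T$ only by the canonical projection along the flow onto $T\Sigma^u_+$, a bounded $T$-independent operator; hence $D\Phl^{++}$ maps $C^{u,c}$ into $C^{u,c'}$ (with $c'$ depending only on $c$) and expands by at least $\tfrac12 e^{(\lambda_2-\eps)T_0}$. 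The quantitative refinement we need is that this image cone collapses onto the strong unstable direction $T^{uu}(q_T)$ at $q_T = \Phl^{++}(x)$. Using the lower bound in Proposition~\ref{bvp}, vectors along $u_1$ are amplified only like $e^{\lambda_1 T}$ while vectors along $u_2$ are amplified like $e^{\lambda_2 T}$; after normalizing, the aperture of $D\Phl^{++}C^{u,c}(x)$ around $T^{uu}(q_T)$ is $O(e^{-(\lambda_2-\lambda_1)T_0})$.

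Now apply $D\Phg^+$. Assumption~A4a forces $D\Phg^+(q^+)T^{uu}(q^+)$ to be transversal to $T^{ss}(p^+)$ on $TS_0$, and A4b yields the corresponding transversality for the plane $\{s_2=u_1=0\}$ off the energy surface. Consequently there is a fixed cone $C^{u,c_0}$ at $p^+$ containing $D\Phg^+\,T^{uu}$, and by continuity of $D\Phg^+$ together with compactness, any sufficiently narrow cone around $T^{uu}(q)$ near $q^+$ is mapped into $C^{u,c_0}$. Choosing $T_0$ so that the aperture of the previous step lies below the required threshold, and fixing $c\ge c_0$, we obtain $D\Phi\,C^{u,c}(x) \subset C^{u,c}(\Phi(x))$. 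The expansion $\|D\Phi v\| \ge \Lambda\|v\|$ with $\Lambda>1$ follows by combining $\tfrac12 e^{(\lambda_2-\eps)T_0}$ with the fixed lower bound $M^{-1}$ on $\|D\Phg^+ w\|$. The stable-cone statement for $(D\Phi)^{-1}$ is the mirror image of the same argument, with the time-reversed normal form and A4a--A4b applied to $(\Phg^+)^{-1}$.

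Non-emptiness of $C^{u,c}_E$ at points of $U\subset\Sigma^{s,E}_+$ is the content of Lemma~\ref{nonemptycone}. Non-emptiness of $C^{s,c}_E$ at points of $V=\Phi(U)\subset\Sigma^{s,E}_+$ is not stated verbatim there, but follows from the same direct computation: near $p^+$ one has $\nabla N\approx(0,0,\lambda_1\delta,\lambda_2 s_2^+)$, so the tangent space $T\Sigma^{s,E}_+$ contains the $s_2$-direction and the restricted strong-stable cone is non-empty once $c$ is large enough. The set-level projected cone conditions $\pi C^{u,c}\cap U$ and $\pi C^{s,c}\cap V$ then follow from their infinitesimal counterparts by the usual line-integration argument along a segment whose tangent lies in the cone throughout. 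The main obstacle is the third step: cashing in the exponentially small aperture produced by $\Phl^{++}$ against the fixed, $T$-independent distortion of $\Phg^+$, and it is precisely there that A4a and A4b are essential.
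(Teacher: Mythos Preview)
Your overall architecture matches the paper's: split $\Phg^+\circ\Phl^{++}$ into a global factor with bounded distortion and a local factor that collapses the unstable cone onto $T^{uu}$, then use A4 to recover the original cone after $\Phg^+$. The paper does exactly this.

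The gap is in your justification of the cone collapse. You write that ``using the lower bound in Proposition~\ref{bvp}, vectors along $u_1$ are amplified only like $e^{\lambda_1 T}$ while vectors along $u_2$ are amplified like $e^{\lambda_2 T}$.'' But Proposition~\ref{bvp} controls the \emph{base trajectory} $(s^T,u^T)(t)$, not tangent vectors; it says nothing directly about how $D\phi_T$ acts on $v_{u_1}$ versus $v_{u_2}$. The variational equation along the orbit has the form
\[
\begin{bmatrix}\dot v_s\\ \dot v_u\end{bmatrix}=
\begin{bmatrix}-\mathrm{diag}(\lambda_1,\lambda_2)+O(s) & O(s)\\ O(u) & \mathrm{diag}(\lambda_1,\lambda_2)+O(u)\end{bmatrix}
\begin{bmatrix}v_s\\ v_u\end{bmatrix},
\]
and the off-diagonal blocks are \emph{not} uniformly small on $[0,T]$: the $O(u)$ block is only $O(\delta)$, since $u$ need not be small near the exit section. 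So the heuristic ``diagonal growth rates $e^{\lambda_i T}$'' does not follow from hyperbolicity alone and requires a quantitative differential-inequality argument. The paper supplies exactly this via Lemma~\ref{cone-width}: feeding in the bound $\|s(t)\|\le e^{-(\lambda_1-\epsilon)T/2}$ from Proposition~\ref{bvp} on the second half of the orbit, Lemma~\ref{cone-width} yields $D\phi_T C^{u,c}\subset C^{u,\beta_T}$ with $\beta_T=O(e^{-\lambda' T/2})$, $\lambda'=\min\{\lambda_2-\lambda_1-\epsilon,\lambda_1-\epsilon\}$. Without this (or an equivalent Gronwall-type estimate on the variational system), your collapse step is unsupported.

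A smaller point: the Poincar\'e projection from $\phi_T$ to $\Phl^{++}$ is not literally ``$T$-independent''---the hitting time varies with the base point. The paper argues instead that near $y$ one has $|s|=O(e^{-(\lambda_1-\epsilon)T})$, so the Poincar\'e map is exponentially close to the identity on $s$ and to projection onto $u_2$ on $u$; this is what keeps the thin cone $C^{u,\beta_T}$ intact under the section map.
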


Let $x\in U$ and denote $y=\Phl^{++}(x)$. We will first show that
$D\Phl^{++}(x)C^{u,\,c}(x)$ is very close to the strong unstable direction $T^{uu}$.
In general, we expect the unstable cone to contract and get closer to the $T^{uu}$
direction along the flow. The limiting size of the cone depends on how close
the flow is to a linear hyperbolic flow.
%K, rephrasing
We need the following auxiliary Lemma.

Assume that $\phi_t$ is a flow on $\R^d\times \R^k$, and $x_t$ is a trajectory
of the flow. Let $v(t)=(v_1(t), v_2(t))$ be a solution of the variational
equation, i.e. $v(t) = D\phi_t(x_t) v(0)$. Denote the unstable cone
 %K, as before I switch to strict cone inequalities
 $C^{u,\,c}=\{\|v_1\|^2< c\|v_2\|^2\}$.

\begin{lem}\label{cone-width} With the above notations assume that there exists
$b_2>0$, $b_1 < b_2$ and $\sigma,\delta>0$ such that the variational equation
  $$\dot{v}(t) =
  \begin{bmatrix}
    A(t) & B(t) \\
    C(t) & D(t)
  \end{bmatrix}
  \begin{bmatrix}
    v_1(t) \\ v_2(t)
  \end{bmatrix}
  $$
  satisfy $A \le b_1 I$ and $D\ge b_2 I$ as quadratic forms, and $\|B\|\le \sigma$,
  $\|C\|\le \delta$.

  Then for any $c>0$ and $\epsilon>0$, there exists $\delta_0>0$ such that
  if $\ 0< \delta, \sigma< \delta_0$, we have
  $$ (D\phi_t)\,C^{u,\,c} \subset C^{u,\beta_t}, \quad
  \beta_t = ce^{-(b_2-b_1-\epsilon)t} + \sigma/(b_2-b_1-\epsilon). $$
\end{lem}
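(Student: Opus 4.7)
The natural object to track is the slope $\rho(t) = \|v_1(t)\|/\|v_2(t)\|$, since the cone condition $\|v_1\|^2 < c\|v_2\|^2$ is equivalent to a bound on $\rho$. The plan is to derive a scalar Riccati-type differential inequality for $\rho$ from the block variational equation, and then integrate it via Gronwall, once the quadratic self-interaction has been controlled by a bootstrap.

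Pairing the variational equations with $v_1$ and $v_2$ and using the quadratic-form hypotheses $A \le b_1 I$, $D \ge b_2 I$ together with Cauchy--Schwarz on the $B$ and $C$ blocks gives
\[
\tfrac{d}{dt}\|v_1\|^2 \le 2b_1\|v_1\|^2 + 2\sigma\|v_1\|\|v_2\|, \qquad \tfrac{d}{dt}\|v_2\|^2 \ge 2b_2\|v_2\|^2 - 2\delta\|v_1\|\|v_2\|.
\]
Applying the quotient rule to $\rho^2 = \|v_1\|^2/\|v_2\|^2$ produces
\[
\dot\rho \le -(b_2-b_1)\,\rho + \sigma + \delta\,\rho^2.
\]

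Now I bootstrap. Fix a barrier $M$ strictly larger than $\sqrt{c} + \sigma/(b_2-b_1-\epsilon)$ and shrink $\delta_0$ so that $\delta_0\,M \le \epsilon$ and $\sigma_0 \le \delta_0$. As long as $\rho \le M$, the quadratic term is dominated by $\epsilon\,\rho$, so
\[
\dot\rho \le -(b_2-b_1-\epsilon)\,\rho + \sigma,
\]
which is strictly negative at $\rho = M$ by the choice of $M$. Combined with $\rho(0) < \sqrt c < M$, this forces $\rho(t) \le M$ for all $t \ge 0$, and the linearised inequality integrates explicitly to
\[
\rho(t) \le \rho(0)\, e^{-(b_2-b_1-\epsilon)t} + \frac{\sigma}{b_2-b_1-\epsilon} < \sqrt{c}\, e^{-(b_2-b_1-\epsilon)t} + \frac{\sigma}{b_2-b_1-\epsilon},
\]
which, up to the convention of whether the cone parameter tracks $\rho$ or $\rho^2$, is exactly the definition of $\beta_t$.

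The only genuinely delicate point is the bootstrap: the Riccati inequality on its own admits solutions blowing up in finite time, so the term $\delta\,\rho^2$ cannot simply be discarded a priori. The resolution is the forward invariance of the sublevel set $\{\rho \le M\}$, which follows automatically once $\delta$ and $\sigma$ are taken small enough relative to $c$ and $\epsilon$; everything else reduces to a linear first-order Gronwall estimate.
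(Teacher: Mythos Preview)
Your proof is correct and follows essentially the same route as the paper's: both arguments reduce to the same Riccati-type scalar inequality for the slope, bootstrap the quadratic term $\delta\rho^2$ into $\epsilon\rho$ under an a priori bound (the paper writes this as ``assume $\beta_t\le 2\gamma_0$''), and then solve the resulting linear first-order ODE $\beta_t'=-(b_2-b_1-\epsilon)\beta_t+\sigma$. The only cosmetic difference is that the paper differentiates the barrier quantity $\beta_t^2\langle v_2,v_2\rangle-\langle v_1,v_1\rangle$ directly rather than tracking $\rho=\|v_1\|/\|v_2\|$; the $\rho$ versus $\rho^2$ convention mismatch you noted is in fact already present in the paper's own write-up.
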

\begin{proof}
Denote $\gamma_0 = c$. The invariance of the cone field is equivalent to
$$ \frac{d}{dt} \left( \beta_t^2\langle v_2(t), v_2(t)\rangle - \langle v_1(t),
v_1(t)\rangle \right)\ge 0.$$
Compute the derivatives using the variational equation,  apply the norm bounds and
the cone condition, we obtain
$$
2\beta_t \left(  \beta_t' + (b_2 - \delta \beta_t -b_1) \beta_t - \sigma \right)\|v_2\|^2
\ge 0.$$
We assume that $\beta_t \le 2\gamma_0$, then for sufficiently small $\delta_0$,
$\delta \beta_t \le \epsilon$. Denote $b_3 = b_2 - b_1 - \epsilon$ and let
$\beta_t$ solve the differential equation
$$ \beta_t' = - b_3 \beta_t + \sigma. $$
It's clear that the inequality is satisfied for our choice of $\beta_t$.
Solve the differential equation for $\beta_t$ and the lemma follows.
\end{proof}

\begin{proof}[Proof of Lemma~\ref{local-global-cone}]
%K, several notation corrections C^{u} is not defined
  We will only prove the unstable version. By Assumption 4, there exists $c>0$
  such that $D\Phg^{+}(q^+)T^{uu}(q^+)\subset C^{u,\,c}(p^+)$. Note that as
  $T_0 \to \infty$, the neighborhood $U$ shrinks to  $p^+$ and $V$ shrinks
  to $q^+$. Hence there exists $\beta>0$ and  $T_0>0$ such that
  $D\Phg^+(y)C^{u,\,\beta}(y)\subset C^{u,\,c}$ for all $y\in V$.

  Let $(s,u)(t)_{0\le t\le T}$ be the trajectory from $x$ to $y$.
  By Proposition~\ref{bvp}, we have $\|s\|\le e^{-(\lambda_1-\epsilon)T/2}$
  for all $T/2 \le t\le T$. It follows that the matrix for the  variational
  equation
  \be \label{variational-matrix}
  \begin{bmatrix}
    A(t) & B(t) \\
    C(t) & D(t)
  \end{bmatrix}=
  \begin{bmatrix}
    -\diag\{\lambda_1, \lambda_2\} +O(s) & O(s) \\
    O(u) & \diag\{\lambda_1, \lambda_2\} + O(u)
  \end{bmatrix}
  \ee
  satisfies $A\le -(\lambda_1 -\epsilon)I$, $D\ge (\lambda_1-\epsilon)I$,
  $\|C\| = O(\delta)$ and %K, replace   $\|D\| = O(e^{-(\lambda_1-\epsilon)T/2})$ with
  $\|B\| = O(e^{-(\lambda_1-\epsilon)T/2})$.
   %K, rephrase
  As before $C^{u,\,c}(x) = \{\|v_s\| \le c \|v_u\|\}$,
  Lemma~\ref{cone-width} implies
  %K, I did not see meaning of the mid term
  %Z, all the calculations herer are done with weak unstable/stable cones.
  %$$ D\phi_T(x)C^{u,\,c}(x)\subset D\phi_T(x)C^{u,\,c}(x)  \subset C^{u,\beta_T}(y),$$
  %so I remove it
  $$ D\phi_T(x)C^{u,\,c}(x) \subset C^{u,\beta_T}(y),$$
  where $\beta_T = O(e^{-\lambda'T/2})$ and $\lambda' = \min\{\lambda_2-\lambda_1-\epsilon, \lambda_1-\epsilon\}$. Finally, note
  %K, correction of notations
  that $D\phi_T(x)C^{u,\,c}(x)$ and $D\Phl^{++}(x)C^{u,\,c}(x)$ differs
  by the differential of the local Poincar\'e map near $y$. Since near $y$
  we have $|s|=O(e^{-(\lambda_1-\epsilon)T})$, using the equation of motion,
  the Poincar\'e map is exponentially close to identity on the $(s_1,s_2)$
  components, and is exponentially close to a projection to $u_2$ on the
  $(u_1,u_2)$ components. It follows that the cone $C^{u,\,\beta_T}$
  is mapped by the Poincar'e map into a strong unstable cone with
  exponentially small size. In particular, for $T\ge T_0$, we have
  $$ D\Phl^{++}(x)C^{u,\,c}(x) \subset C^{u,\,\beta}(y), $$
and
%K, I add words for the proof of what used to be corollary (see right below).
%now this corollary is a part of the lemma
the first part of the lemma follows. To prove the restricted version
we follow the same arguments.
\end{proof}

%K, I move this into Lemma ~\ref{local-global-cone}
%\begin{cor}
%  The conclusions of Lemma~\ref{local-global-cone} also hold with $C^{uu}$
%and $C^{ss}$ replaced with $C^{uu}_E$ and $C^{ss}_E$, and $U$ replaced with
%$U \cap S_E$.
%\end{cor}

Conditions C1-C3 follows, and this concludes the proof of Proposition~\ref{fixedpt}.

\begin{proof}[Proof of Proposition~\ref{monotone}]
Again, we will only treat the case of $p^+(T)$.   Note that
$l^+(p^+):=(p^+(T))_{T\ge T_0}$ is a forward invariant set of
$\Phg^+\circ \Phl^{++}$, and by Lemma~\ref{local-global-cone},
the map $\Phg^+\circ \Phl^{++}$ also preserves the (unrestricted)
strong unstable cone field %K, notational correction as before
$C^{u,\,c}$. Apply Proposition~\ref{Lipschitz}, we obtain that
$l^+(p^+)$ is contained in a Lipschitz graph
%K, extra ``Lipschitz'' is redundant. Also I get last coordinate u_2, not s_2
% I also add several superscripts +
over the $s_1u_1u_2$ direction. Since $l^+(p^+)$ is also backward invariant, and using
the invariance of the strong stable cone fields, we have $l^+(p^+)$ is contained in
a Lipschitz graph over the $s_1u_1s_2$ direction. The intersection of
the two Lipschitz graph is a Lipschitz graph over the $s_1u_1$ direction. Since
$l^+(p^+)\subset \{s_1 =\delta\}$, we conclude that $l^+(p^+)$ is Lipschitz
over $u_1$. Since the fixed point clearly depends smoothly on $T$, $l^+(p^+)$
is a smooth curve. The Lipschitz condition ensures a uniform derivative bound.
This proves the first claim of the proposition. Note that this also implies $u_1$
is a monotone function of $T$.

For the monotonicity, note that all $p^+(T)$ are solutions of the Shil'nikov
boundary value problem. %K, I add explanations you were planning to add.
By definition $(p^+(T))_{T>T_0}$ belong to $\Sigma^s_+$ and we have $s_1=\dt$.
For all finite $T$ the union of $(p^+(T))_{T>T_0}$ is smooth.
Since $l^+(p^+)$ is a Lipschitz graph over $u_1$ for small $u_1$, we have
that the tangent $(ds_2,du_1,du_2)$ is well-defined and ratios
$\frac{ds_2}{du_1}$ and $\frac{du_2}{du_1}$ are bounded.

Theorem~\ref{tangency} implies that the $s_2$, $u_2$ components are
dominated by the $s_1$, $u_1$ directions, namely, there exist $C>0$
and $\al>0$ such that for components of $p^+(T)$ and all $T>T_0$ we have
$|u_2|\le C|u_1|^\al$.

Using the form of the energy given by Corollary \ref{normal-hamiltonian-form}
its differential has the form
$$
dE(s,u)=(\lb_1+O(s,u))\, s_1 du_1+(\lb_1+O(s,u)) \,u_1 ds_1+
$$
$$+
(\lb_2+O(s,u))\, s_2du_2+(\lb_2+O(s,u))\, u_2ds_2.
$$
On the section $\Sigma_+^s$ differential $ds_1=0$ and coefficients
in front of $ds_2$ can be make arbitrary small.
Therefore, to prove monotonicity of $E(p^+(T))$ in $T$ it suffices
to prove that for any $\tau>0$ there is $T_0>0$ such that for any
$T>T_0$ tangent of $l^+(p^+)$ at $p^+(T)$ satisfies $|\frac{du_2}{du_1}|<\tau$.
Indeed, $(s_1,s_2)(T)\to (\dt,s_2^+)$ as $T\to \infty$.

We prove this using Lemma \ref{cone-width} and the form of the equation
in variations (\ref{variational-matrix}). Suppose $|\frac{du_2}{du_1}|>\tau$
for some $\tau>0$ and arbitrary small $u_1$. If $T_0$ is large enough, then
$T>T_0$ is large enough and $u_1$ is small enough. By Theorem ~\ref{tangency}
we have $|u_2|\le C|u_1|^\al$ so $u_2$ is also small enough. Thus,
we can apply Lemma \ref{cone-width} with $v_1=(s_1,s_2,u_1)$ and
$v_2=u_2$.  It implies that the image of a tangent to $l^+(p^+)$ after
application of $D\Phi_{loc}^{++}$ is mapped into a small unstable cone
$C^{u,\bt}$ with $\bt=(e^{-(\lb_2-\lb_1-\eps)T_0}+O(\dt))/\tau$.
However, the image of $l^+(p^+)$ under $D\Phi_{loc}^{++}$ by
definition is $(q^+(T))_{T\ge T_0}$ and its tangent can't be
in an unstable cone. This is a contradiction.

As a consequence, the energy $E(p^+(T))$ depends monotonically on $u_1$.
Combine with the first part, we have $E(p^+(T))$ depends monotonically on $T$.
\end{proof}

\subsection{Double leaf cylinder}

In the case of the double leaf cylinder, there exist two rectangles $R_1$
and $R_2$, whose images under $\Phg\circ \Phl$ intersect themselves
transversally, providing a ``horseshoe'' type picture.

\begin{prop}\label{double}
  There exists $E_0>0$ such that the following hold:
  \begin{enumerate}
  \item For all $0< E \le E_0$, there exist rectangles
  $R_1(E),R_2(E)\in \Sigma^{s,E}_+$ such that for $i=1, 2$,
  $\Phg^i \circ \Phl^{++} (R_i)$ intersects both $R_1(E)$ and $R_2(E)$ transversally.
  \item Given $\sigma=(\sigma_1, \cdots, \sigma_n)$, there exists a unique fixed point $p^{\sigma}(E)$ of
        $$ \prod_{i=n}^1 \left(\Phg^{\sigma_i} \circ  \Phl^{++} \right)|_{R_{\sigma_i}(E)} $$
        on the set $R_{\sigma_1}(E)$. 
  \item The curve $p^{\sigma}(E)$ is a $C^1$ graph over the $u_1$ component
  with uniformly bounded derivatives. Furthermore, $p^{\sigma}(E)$  approaches
  $p^{\sigma_1}$ and for each $1 \le j\le n-1$, 
  $$ \prod_{i=j}^1 \left(\Phg^{\sigma_i} \circ  \Phl^{++} \right) (p^\sigma(E)) $$
  approaches $p^{\sigma_{j+1}}$ as $E\to 0$. 
  \end{enumerate}
\end{prop}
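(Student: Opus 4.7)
The strategy is to mirror the single loop proof (Corollary~\ref{trans-rect}, Propositions~\ref{fixedpt} and~\ref{monotone}) in the horseshoe setting afforded by assumption~A$3'$. Because $\gamma^1$ and $\gamma^2$ both enter the local box along $s_1>0$ and exit along $u_1>0$, a single family of local rectangles in $\Sigma^{s,E}_+$ supports both global maps $\Phg^1$ and $\Phg^2$, and the union of orbits realising a symbolic sequence $\sigma\in\{1,2\}^n$ will be captured by an abstract Smale horseshoe parametrised by $E$.

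\emph{Part 1.} For each sufficiently small $E>0$, apply Theorem~\ref{local-domain} twice to produce rectangles $R_1(E),R_2(E)\subset\Sigma^{s,E}_+$, centred at the Shil'nikov orbits that shadow $\gamma^1$ and $\gamma^2$ respectively, with stable/unstable sides in the sense of Proposition~\ref{stable-curve} that collapse onto arcs through $p^1,p^2$ as $E\to 0$. The two Shil'nikov parameters $T_1(E), T_2(E)$ can be matched to a single energy because energy is a monotone function of $T$ (this is the analogue of Proposition~\ref{monotone} to be reproved in part~3). Assumption~A$4'$ is exactly the input needed to repeat the geometric argument of Corollary~\ref{trans-rect} for both global maps simultaneously: for every $i,j\in\{1,2\}$, $\Phg^i\circ\Phl^{++}(R_i)$ stretches across $R_j$ in the unstable direction while its stable sides miss $R_j$, giving a transverse crossing. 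This is the horseshoe geometry.

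\emph{Part 2.} Given $\sigma=(\sigma_1,\ldots,\sigma_n)$, set
\[
\Psi_\sigma = \prod_{i=n}^1 \bigl(\Phg^{\sigma_i}\circ\Phl^{++}\bigr),
\]
restricted to the set of $x\in R_{\sigma_1}(E)$ whose first $j$ partial iterates lie in $R_{\sigma_{j+1}}(E)$ for $1\le j\le n-1$. By the transverse crossings of part~1, this restricted domain is a non-empty rectangle with Conley--McGehee isolation block structure inherited from the atomic factors. The unstable and stable cone conditions C1--C3 for $\Psi_\sigma$ are obtained by composing those already verified in Lemma~\ref{local-global-cone} for each atom $\Phg^{\sigma_i}\circ\Phl^{++}$: cone preservation and uniform expansion persist under composition. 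The Conley--McGehee criterion, applied verbatim as in Proposition~\ref{fixedpt}, then supplies a unique fixed point $p^\sigma(E)$.

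\emph{Part 3.} I follow Proposition~\ref{monotone}: forward invariance of $\{p^\sigma(E)\}$ under $\Psi_\sigma$ together with the unstable cone places the family on a Lipschitz graph over $(s_1,u_1,u_2)$, backward invariance and the stable cone give a graph over $(s_1,u_1,s_2)$, and since $s_1\equiv\delta$ on $\Sigma^s_+$ the intersection is a Lipschitz graph over $u_1$ with uniform derivative bounds. Smooth parameter dependence in Proposition~\ref{bvp} upgrades this to $C^1$. The convergence $p^\sigma(E)\to p^{\sigma_1}$ as $E\to 0$ is immediate from part~2 of Theorem~\ref{local-domain}, since $R_{\sigma_1}(E)$ collapses to $p^{\sigma_1}$; the same reasoning applied to the $j$-th partial composition, whose image lies in $R_{\sigma_{j+1}}(E)$, gives $\prod_{i=j}^1(\Phg^{\sigma_i}\circ\Phl^{++})(p^\sigma(E))\to p^{\sigma_{j+1}}$. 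The main obstacle is really confined to part~1: checking that A$4'$ delivers the doubly-crossing horseshoe picture uniformly as $E\to 0$, with rectangles degenerating onto the skeleton of the two homoclinics. Everything else is bookkeeping transferred from the simple loop case.
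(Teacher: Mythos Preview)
Parts 2 and 3 of your plan match the paper's argument and are correct. The divergence, and the gap, is in Part 1.

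You propose to invoke Theorem~\ref{local-domain} twice---once per homoclinic $\gamma^i$---and then match the resulting time parameters $T_1,T_2$ to a common energy. The paper does something simpler and avoids both the double invocation and the energy-matching: it applies Theorem~\ref{local-domain} \emph{once}, obtaining a single rectangle $R^{++}(E)\subset\Sigma^{s,E}_+$ whose image $\Phl^{++}(R^{++}(E))$ is a long thin strip along $T_u^+$ that, for small enough section parameter $\delta$, contains both $q^1$ and $q^2$. It then \emph{defines} $R_i(E)$ as $(\Phl^{++})^{-1}$ of the truncation (by stable curves) of this image to a neighbourhood $V^i\ni q^i$ on which $\Phg^i$ is defined.

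This is not merely cosmetic. Your rectangles, as produced directly by Theorem~\ref{local-domain}, have images under $\Phl^{++}$ whose unstable sides extend to $\partial B_{r/2}$; nothing in your construction ensures $\Phl^{++}(R_i)\subset V^i$, so the composition $\Phg^i\circ\Phl^{++}|_{R_i}$ is not well-defined as written. More seriously, for the horseshoe you need the \emph{cross}-intersections $\Phg^i\circ\Phl^{++}(R_i)\pitchfork R_j$ for $j\ne i$. In the paper's setup this comes for free: both $R_1,R_2$ are horizontal sub-strips of the \emph{same} $R^{++}(E)$, and by the mechanism of Corollary~\ref{trans-rect} the image $\Phg^i\circ\Phl^{++}(R_i)$ is a full vertical strip across $R^{++}(E)$, hence across every horizontal sub-strip. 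With two independently constructed rectangles centred at distinct points $p^1,p^2$, that cross-crossing requires a separate argument you have not supplied.
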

\begin{rmk}
  The second part of Theorem~\ref{periodic} follows from this proposition.
\end{rmk}
\begin{proof}
 Let $R^{++}(E)$ be the rectangle associated to the local map $\Phl^{++}$ constructed
 in Theorem~\ref{local-domain},  reparametrized in  $E$. Note that for sufficiently
 small $\delta$, the curve $\gamma_s^+$ contains both $p^1$ and $p^2$, and $\gamma_u^+$
 contains both $q^1$ and $q^2$.

Let  $V^1\ni q^1$ and $V^2\ni q^2$ be the domains of $\Phg^1$ and $\Phg^2$, respectively.
It follows from assumption A4a$'$ that $\Phg^1 \gamma_u^+\cap V^1$ intersects $\gamma_s^+$
transversally at $p^i$. By Proposition~\ref{local-domain}, for sufficiently small $E>0$,
$\Phg^1 (\Phl^{++}(R^{++}(E))\cap V_1)$ intersects $R^{++}(E)$ transversally.
Let $Z^1\subset V^1$ be a smaller neighborhood of $q^1$. We can truncate the rectangle
$\Phl^{++}(R^{++}(E))$ by stable curves, and obtain a new rectangle $R_1'(E)$ such that
$$ \Phl^{++}(R^{++}(E)) \cap Z^1 \subset R_1'(E) \subset  \Phl^{++}(R^{++}(E)) \cap V^1.$$
Denote $R_1(E)=(\Phl^{++})^{-1}(R_1'(E))$. The rectangles $R_2(E)$ and
$R_2'(E)$ are defined similarly. For $i=1,2$,  $\Phg^i \circ \Phl^{++} (R_i(E))$
intersects $R^{++}(E)$, and hence $R_i(E)$ transversally. This proves the first statement. 

Let $R^\sigma(E)$ denote the subset of $R_{\sigma_1}(E)$ on which the composition
        $$ \prod_{i=n}^1 \left(\Phg^{\sigma_i} \circ  \Phl^{++} \right)|_{R_{\sigma_i}(E)} $$
is defined. $R^\sigma(E)$ is still a rectangle. The composition map and the rectangle $R^\sigma(E)$ satisfy the isolation block condition and the cone conditions. As a consequence, there exists a unique fixed point. 

The proof of the $C^1$ graph property is similar to that of Proposition~\ref{monotone}.
\end{proof}

\section{Normally hyperbolic cylinder}\label{NHIC-isolating-block}

\subsection{NHIC for the slow mechanical system}
\label{sec:slow-nhic}

In this section we will prove Theorem~\ref{NHIC-mechanical}. Let us first consider
the single leaf case. We will show that the union
$$\mM:=\bigcup_{0< E\le E_0}\gamma_E^+\cup
\bigcup_{0< E\le E_0}\gamma_E^-\cup
\bigcup_{-E_0\le E<0}\gamma_E^{+-}\cup \gamma^+\cup \gamma^-$$
forms a $C^1$ manifold with boundary. Denote
$$l^+(p^+)=\{p^+(E)\}_{0<E\le E_0}, \quad l^+(p^-)=\{p^-(E)\}_{0<E\le E_0},
$$
$l^+(q^+) = \Phl^{++}(l^+(p^+))$ and $l^+(q^-) = \Phl^{--}(l^+(q^-))$.
Note that the superscript of $l$ indicates positive energy instead of
the signature of the homoclinics. We denote
$$
l^-(p^+) = \{p^c(E)\}_{-E_0 \le E <0}
$$
$l^-(q^-) = \Phl^{+-}(l^-(p^+))$, $l^-(p^-) = \Phg^-(l^-(q^-))$
and $l^-(q^+) = \Phl^{-+}(l^-(p^-))$. An illustration of $\mM$
the curves $l^\pm$ are included in Figure~\ref{fig:mnfd}.

%Z These figure can only be seen if compiled with pdflatex.
\begin{figure}[t]
 \centering
 %KK, still does not work
 %Z, need to input pdf_tex
 \input 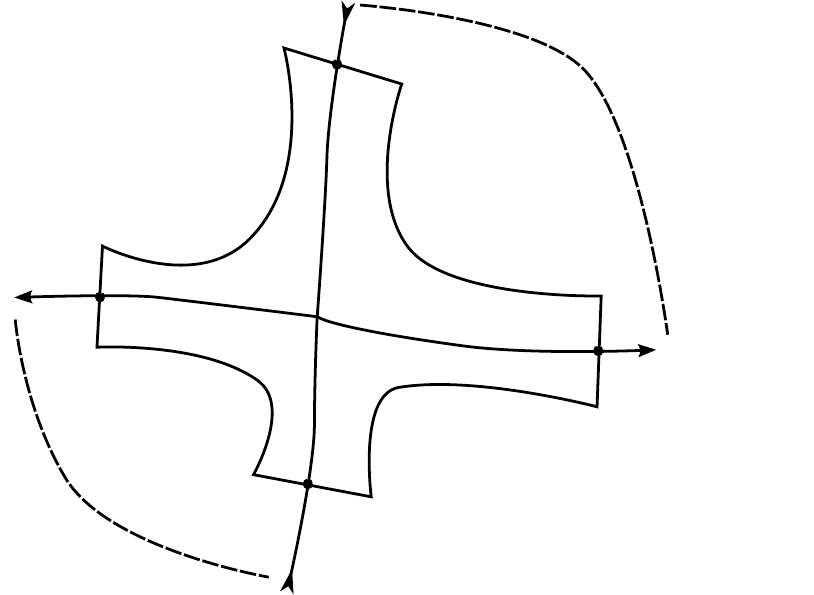_tex
 \caption{Invariant manifold $\mM$ near the origin}
 \label{fig:mnfd}
\end{figure}

 By Proposition~\ref{monotone},
$l^\pm(y)$ ($y$ is either $p^\pm$, or  $q^\pm$) are all $C^1$ curves
with uniformly bounded derivatives, hence they extend to $y$ as
$C^1$ curves. Denote $l(y) = l^+(y)\cup l^-(y) \cup \{y\}$ for
$y$ either $p^\pm$, or  $q^\pm$.

\begin{prop}
  There exists one dimensional subspaces \
  $L(p^\pm)\subset  T_{p^\pm}\Sigma^u_\pm$ and \
  $L(q^\pm)\subset T_{q^\pm}\Sigma^s_\pm$  such that
  the curves $l(p^\pm)$ are tangent to $L(p^\pm)$ at $p^\pm$
  and $l(q^\pm)$ are tangent to $L(q^\pm)$ at $q^\pm$.
\end{prop}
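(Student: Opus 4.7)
The plan is to identify the tangent direction at each limit point by combining Theorem~\ref{tangency} (which controls the weak-plane projection of the asymptotic direction), Proposition~\ref{monotone} (which supplies the Lipschitz graph property of each branch), and smoothness of the global maps $\Phg^\pm$.

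First, for each $y\in\{p^\pm,q^\pm\}$ the two branches $l^+(y)$ and $l^-(y)$ are one-parameter families of entry or exit points of Shil'nikov boundary value problems (Proposition~\ref{bvp}): for instance $l^+(p^+)$ and $l^-(p^+)$ are entry curves on $\Sigma^s_+$ of the BVP families producing $\gamma^+_E$ and $\gamma^c_E$ respectively, while $l^\pm(q^+)$ are the corresponding exit curves on $\Sigma^u_+$. Applying Theorem~\ref{tangency} to each family, the $(u_1,u_2)$-projection of each entry curve is tangent at $p^\pm$ to the $u_1$-axis, and the $(s_1,s_2)$-projection of each exit curve is tangent at $q^\pm$ to the $s_1$-axis. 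The cone-contraction argument from the proof of Proposition~\ref{monotone} (using $|u_2|\le C|u_1|^{\alpha}$ from Theorem~\ref{tangency} together with the cone preservation of Lemma~\ref{local-global-cone}) then forces $du_2/du_1\to 0$ along each branch of $l(p^\pm)$.

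Second, I would pin down the remaining slope via the smooth push-forward relation $l^\pm(p^+)=\Phg^+(l^\pm(q^+))$ and its analogue at the ``$-$''-sections. Since $\Phg^\pm$ is smooth near $q^\pm$, it suffices to first establish the tangent line of $l(q^\pm)$ at $q^\pm$. Running a symmetric cone-contraction argument for the inverse return map on $\Sigma^u_\pm$ (using the strong \emph{stable} cone field) forces the $u_2$-slope of $l(q^\pm)$ to converge to zero, so the asymptotic tangent of each branch of $l(q^\pm)$ lies along $\partial_{s_1}$. Taking $L(q^\pm)$ to be this line and setting
\[
L(p^\pm) := D\Phg^\pm(q^\pm)\,L(q^\pm),
\]
the push-forward relation yields tangency of $l(p^\pm)$ to $L(p^\pm)$ at $p^\pm$. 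The two one-sided branches of $l(y)$ meeting at $y$ share the same tangent line because Theorem~\ref{tangency} yields the same asymptotic weak-plane direction for both families, and $D\Phg^\pm(q^\pm)$ is a fixed linear map in the limit.

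The main technical obstacle is the control of the $u_2$-slope along $l(q^\pm)$: Theorem~\ref{tangency} only constrains the $(s_1,s_2)$-projection, leaving the $u_2$-component a priori free within its Lipschitz bound. This gap is closed by running the cone-contraction of Lemma~\ref{local-global-cone} backward in time and combining it with a backward-invariant Lipschitz graph property in the spirit of Proposition~\ref{Lipschitz}, exactly mirroring the analysis in the second half of the proof of Proposition~\ref{monotone}; once this is in place, $L(q^\pm)$ is one-dimensional and $L(p^\pm)$ is obtained by the smooth push-forward above.
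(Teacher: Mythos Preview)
Your argument has a genuine gap: the claim that the tangent of $l(q^\pm)$ at $q^\pm$ lies along $\partial_{s_1}$ is not justified by the cone machinery you invoke, and in fact it is generically false. The strong stable cone is centered on the $s_2$-direction; running Lemma~\ref{local-global-cone} backward on $\Sigma^u_\pm$ (exactly the time-reversal of the argument in Proposition~\ref{monotone}) yields $ds_2/ds_1 \to 0$ along $l(q^\pm)$, which is the same information Theorem~\ref{tangency} already gives. Neither the forward strong-unstable cone nor the backward strong-stable cone controls $du_2/ds_1$ on the exit section; all you can extract from Proposition~\ref{Lipschitz} there is a uniform Lipschitz bound, not decay to zero. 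So Theorem~\ref{tangency} and the cone arguments together only pin the tangent of $l(q^\pm)$ to the two-plane $\{u_1=s_2=0\}$, not to a line.

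What you are missing is assumption A4b. The paper's proof uses Theorem~\ref{tangency} at \emph{both} sections: the tangent of $l(p^+)$ lies in $\{s_1=u_2=0\}$, and the tangent of $l(q^+)$ lies in $\{u_1=s_2=0\}$. Since $l(p^+)=\Phg^+(l(q^+))$, the tangent of $l(p^+)$ also lies in $D\Phg^+(q^+)\{u_1=s_2=0\}$. Assumption A4b says precisely that the intersection
\[
L(p^+) \;=\; D\Phg^+(q^+)\{u_1=s_2=0\}\ \cap\ \{s_1=u_2=0\}
\]
is one-dimensional, and one then sets $L(q^+)=D(\Phg^+)^{-1}L(p^+)$. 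Note that $L(q^+)$ is a generic line in $\operatorname{span}\{\partial_{s_1},\partial_{u_2}\}$ depending on the global map, not $\partial_{s_1}$; this is why your push-forward formula $L(p^\pm)=D\Phg^\pm(q^\pm)\,\partial_{s_1}$ gives the wrong answer. Without invoking A4b you cannot cut the dimension from two to one.
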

\begin{proof}
  Each point $x\in l(p^+)$ contained in $S_E$ is equal to
  the  exiting position $s(T_E),u(T_E)$ of a solution
  $(s,u):[0,T_E]\to  B_r$ that satisfies Shil'nikov's boundary
  value problem (see Proposition~\ref{bvp}). As $x\to p^+$,
  $E\to 0$ and $T_E\to
  \infty$. According to Corollary~\ref{tangency}, $l(p^+)$ must
  be tangent to the  plane $\{s_1=u_2=0\}$. Similarly,
  $l(q^+)$ must be tangent to the plane $\{u_1=s_2=0\}$.
  On the other hand, due to assumption 4 on the  global map
  (see Section~\ref{sec:intro}), the image of  $D\Phg^+\{u_1=s_2=0\}$
  intersects $\{s_1=u_2=0\}$ at a one  dimensional subspace. Denote
  this space $L(p^+)$ and write  $L(q^+)=D(\Phg^+)^{-1}L(p^+)$.
  Since $l(p^+)$ must be tangent to  both $\{u_2=s_1=0\}$ and
  $D\Phg^+\{u_1=s_2=0\}$, $l(p^+)$ is tangent to $L(p^+)$.
  We also obtain the tangency of $l(q^+)$ to $L(q^+)$  using
  $l(q^+)=(\Phg^+)^{-1}l(p^+)$. The case for $l(p^-)$ and
  $l(p^-)$ can be proved similarly.
\end{proof}

We have the following continuous version of Lemma~\ref{local-global-cone},
which states that the flow on  $\mM$ preserves the strong stable and
strong unstable cone fields. The proof of Lemma~\ref{mM-cone} is
contained in the proof of Lemma~\ref{local-global-cone}.

\begin{lem}\label{mM-cone}
  There exists $c>0$ and $E_0>0$ and  continuous cone family $C^{u}(x)$
  and $C^{s}(x)$, such that for all $x\in \mM$, the following hold:
  \begin{enumerate}
  \item $C^{s}$ and $C^{u}$ are transversal to $T\mM$, $C^{s}$ is
    backward invariant and $C^{u}$ is forward invariant.
  \item There exists $C>0$ such that the following hold:
    \begin{itemize}
    \item $\|D\phi_t(x)v\|\ge C e^{(\lambda_2-\epsilon)t}$, $v\in
      C^{u}(x)$, $t\ge 0$;
    \item $\|D\phi_t(x)v\|\ge C e^{-(\lambda_2-\epsilon)t}$, $v\in
      C^{s}(x)$, $t\le 0$.
    \end{itemize}
  \item There exists a neighborhood $U$ of $\mM$ on which the projected
  cones $\pi C^{u}\cap U$ and $\pi C^{s}\cap U$ are preserved.
  \end{enumerate}
\end{lem}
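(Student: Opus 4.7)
The plan is to build the cone fields by piecing together the local construction near the hyperbolic fixed point with the global transport by $\Phg^\pm$, and then transport by the flow to cover all of $\mM$. In a neighborhood $B_r$ of the origin, I define $C^{u,c}$ and $C^{s,c}$ exactly as in Section~\ref{sec:local} using the normal form coordinates. Assumption A4a guarantees that $D\Phg^\pm(q^\pm)T^{uu}(q^\pm)$ is transverse to $T^{ss}(p^\pm)$ on $TS_0$, and by continuity this transversality persists for nearby points and energies. As in the proof of Lemma~\ref{local-global-cone}, this allows me to fix a constant $c>0$ and small neighborhoods $U^\pm\ni p^\pm$, $V^\pm\ni q^\pm$ such that $D\Phg^\pm$ maps $C^{u,c}$ into $C^{u,c}$ and similarly for $C^{s,c}$ under $(D\Phg^\pm)^{-1}$.

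Next I extend the cones to all of $\mM$. For a point $x\in \mM\cap B_r$ I use the normal-form cones directly. For $x\in\mM\setminus B_r$, the orbit of $x$ under the flow reaches a section $\Sigma^{s,u}_\pm\cap B_r$ in a uniformly bounded (forward or backward) time, so I simply transport the local cone at the first entry/exit time by the differential $D\phi_t$. Continuity in $x$ comes from the smoothness of the flow, and the definition is consistent on the overlap because near the origin both definitions agree by construction. Backward invariance of $C^s$ and forward invariance of $C^u$ follow: inside $B_r$ this is Lemma~\ref{local-cone}; across the section-to-section local map it is the content of Lemma~\ref{local-global-cone} (applied with the matrix~(\ref{variational-matrix}) and Lemma~\ref{cone-width}); across the global map it is the transversality provided by A4a.

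The expansion estimate in item (2) is assembled piecewise. Inside $B_r$ Lemma~\ref{local-cone} gives the bound $\|D\phi_t v\|\ge e^{(\lambda_2-\epsilon)t}\|v\|$ for $v\in C^u$. On a compact piece of $\mM$ outside $B_r$ the flow travels for a uniformly bounded time, so $D\phi_t$ has uniformly bounded norm from above and below; absorbing these bounded losses into the constant $C$ yields the claimed $Ce^{(\lambda_2-\epsilon)t}$ lower bound globally on $\mM$, because any sufficiently long orbit segment in $\mM$ spends all but a uniformly bounded portion of its time inside $B_r$ (either near the origin between the local maps, or near a periodic orbit $\gm^+_E$, $\gm^-_E$, $\gm^c_E$ which themselves spend most of their period in $B_r$, by Proposition~\ref{bvp}). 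The argument for $C^s$ is symmetric. Item (3) then follows by continuity: the cone field and its invariance are open conditions, so they extend to a small tubular neighborhood $U$ of $\mM$, and the projected cones $\pi C^{u,s}\cap U$ are preserved by the flow on $U$ for the (uniformly bounded) time needed to reach the next section.

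The main obstacle is to show that the piecewise-defined cone field is genuinely continuous at the origin and at the homoclinic limit points $p^\pm,q^\pm$, despite the unbounded flow time. This is exactly the point where Theorem~\ref{tangency} together with Lemma~\ref{cone-width} is needed: it forces the image of $C^{u,c}$ under the long passage near $0$ to collapse into an arbitrarily thin cone around the strong unstable direction $T^{uu}$, which A4a then maps transversally to $T^{ss}$; combined with the tangency of $l(p^\pm)$ and $l(q^\pm)$ to the one-dimensional limit subspaces $L(p^\pm), L(q^\pm)$, this guarantees the limiting cone at the origin is well defined, transverse to $T\mM$, and compatible with all four branches meeting at $0$.
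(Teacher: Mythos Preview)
Your plan is correct and matches the paper's approach: the paper does not write out a separate proof, stating only that ``the proof of Lemma~\ref{mM-cone} is contained in the proof of Lemma~\ref{local-global-cone}'', and your outline is exactly an unpacking of that reference---piecing together the normal-form cones of Lemma~\ref{local-cone}, the cone-collapse estimate of Lemma~\ref{cone-width} via the variational matrix~(\ref{variational-matrix}), and the A4a transversality across the global maps~$\Phg^\pm$. Your additional care about continuity of the cone field at the origin and at the limit points $p^\pm,q^\pm$ goes beyond what the paper spells out.
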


Note that a continuous version of Proposition~\ref{Lipschitz} also holds.
As a consequence, the the set $\mM$ is contained in a Lipschitz graph over
the $s_1$ and $u_1$ direction. This implies that $\mM$ is a $C^1$ manifold.

\begin{cor}
  The manifold $\mM$ is a $C^1$ manifold with boundaries  $\gamma_{E_0}^+$,
  $\gamma_{E_0}^-$ and  $\gamma_{-E_0}^{+-}$.
\end{cor}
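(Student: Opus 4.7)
The plan is to combine three ingredients: (i) the continuous-time version of Proposition~\ref{Lipschitz} to place $\mM$ inside transverse Lipschitz graphs, (ii) Proposition~\ref{monotone} (together with the parallel statements for $p^-(E)$ and $p^c(E)$) to upgrade to $C^1$ away from the homoclinics, and (iii) Theorem~\ref{tangency} together with Assumption A4b to glue the pieces $C^1$-smoothly across $\gamma^\pm$.

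\textbf{Step 1 (Lipschitz structure).} By Lemma~\ref{mM-cone}, on a neighbourhood $U$ of $\mM$ there is a forward-invariant strong-unstable cone field $C^u$ and a backward-invariant strong-stable cone field $C^s$, each one-dimensional, transverse to $T\mM$, with uniform expansion/contraction rate $e^{\lambda_2-\eps}$. Running the proof of Proposition~\ref{Lipschitz} with iteration replaced by time-$t$ maps of the flow shows that $\mM\cap U$ is contained in a Lipschitz graph over the two central directions with range in the $u$-direction, and symmetrically in a Lipschitz graph with range in the $s$-direction. Intersecting these exhibits $\mM$ as a $2$-dimensional Lipschitz submanifold locally, and produces the correct topological structure with boundary near the three outermost periodic orbits.

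\textbf{Step 2 (smoothness away from $\gamma^\pm$).} Off the homoclinics every point of $\mM$ lies on a periodic orbit $\gamma^+_E$, $\gamma^-_E$ or $\gamma^c_E$ with $E\ne 0$. By Proposition~\ref{monotone} and its direct analogues, the traces $l^\pm(y)$ on the sections $\Sigma^{s,u}_\pm$ are $C^1$ curves over $u_1$ with uniformly bounded derivatives, depending $C^1$ on the energy parameter $E$. Saturating these transverse $C^1$ curves by the $C^1$ Hamiltonian flow, and using flow-box charts along each periodic orbit, yields a $C^1$ atlas for $\mM\setminus\gamma^\pm$.

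\textbf{Step 3 (smoothness across the homoclinics, the main obstacle).} The delicate point is $C^1$ regularity along the homoclinics $\gamma^\pm$, where the periods $T$ blow up and the energy parametrisation degenerates at $E=0$. My proof would first identify the candidate tangent plane $T_{p^+}\mM$ as the span of the Hamiltonian vector field $X_{H^s}(p^+)$ and the line $L(p^+)=D\Phg^+\{s_2=u_1=0\}\cap\{s_1=u_2=0\}$ furnished by Assumption A4b. Theorem~\ref{tangency} shows that $l^\pm(p^+)$ is tangent to $\{s_1=u_2=0\}$ at $p^+$ as $E\to 0$, and A4b then forces its limiting tangent to be precisely $L(p^+)$; the analogous statements at $p^-$ and $q^\pm$ (and via the composite global map for $p^c$) give a continuous $2$-plane field along $\gamma^\pm$ that matches $T\mM$ from either side of $E=0$. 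A Hadamard--Perron style bootstrap, using the invariant cone field on $\mM$ from Lemma~\ref{mM-cone} together with the Lipschitz graph structure from Step~1, then promotes $C^0$ continuity of the tangent planes to genuine $C^1$ regularity of $\mM$ across $\gamma^\pm$.

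The boundary of $\mM$ can then be read off directly from the defining union: it consists of the outermost orbits $\gamma^+_{E_0}$, $\gamma^-_{E_0}$ and $\gamma^{+-}_{-E_0}$. The principal difficulty throughout is Step~3, precisely because uniform hyperbolicity in the flow direction breaks down near the fixed point; it is exactly here that Shil'nikov's boundary value problem (Proposition~\ref{bvp}), the tangency estimate of Theorem~\ref{tangency}, and transversality assumption A4b combine to pin down a continuous limit of tangent planes which the cone bootstrap then upgrades to $C^1$.
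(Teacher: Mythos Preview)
Your proposal uses the same ingredients as the paper (Proposition~\ref{monotone}, Theorem~\ref{tangency}, Assumption A4b, and the cone/Lipschitz machinery of Lemma~\ref{mM-cone}) and is essentially correct, but the paper organizes the argument differently and more directly. You treat the whole of the homoclinics $\gamma^\pm$ as the ``main obstacle''; the paper instead observes that the $C^1$ curves $l(p^\pm)$, $l(q^\pm)$ (whose tangent lines $L(p^\pm)$, $L(q^\pm)$ at the endpoints are identified in the Proposition immediately preceding the Corollary, exactly via your Theorem~\ref{tangency}$+$A4b argument) sweep out all of $\mM\setminus\{0\}$ under the flow, so $\mM$ is $C^1$ everywhere except possibly at the \emph{origin}. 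The genuinely singular point is thus $\{0\}$ alone---not the rest of $\gamma^\pm$---because that is where the flow has a fixed point and your flow-box saturation from Step~2 breaks down. At the origin the paper does not invoke any Hadamard--Perron bootstrap: Theorem~\ref{tangency} gives directly that every $x\in\mM\cap B_r(0)$ lies in $\{|s_2|\le C|s_1|^\alpha,\ |u_2|\le C|u_1|^\alpha\}$ with $\alpha>1$, and together with the Lipschitz-graph-over-$(s_1,u_1)$ structure from your Step~1 this forces the tangent plane to converge to $\{s_2=u_2=0\}$ as $(s,u)\to 0$. Your ``cone bootstrap'' in Step~3 is not wrong, but it is vaguer than needed; the paper's route is shorter precisely because it isolates $\{0\}$ as the only point not covered by flow-saturation of the section curves.
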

\begin{proof}
  The curves $l(p^\pm)$ and $l(q^\pm)$ sweep out the set $\mM \setminus \{0\}$
  under the flow.  It follows that $\mM$ is smooth at everywhere except may
  be $\{0\}$.  Since any $x\in \mM\cap B_r(0)$  is contained in a solution of
  the Shil'nikov boundary value problem, Corollary~\ref{tangency} implies that
  $x$ is  contained in the set $\{|s_2|\le C|s_1|^\alpha, |u_2|\le  C|u_2|^\alpha\}$.
  It follows that the tangent plane of $\mM$ to $x$ converges to the plane
  $\{s_2=u_2=0\}$ as $(s,u)\to 0$.
\end{proof}

\begin{cor}\label{nhm}
  There exists a invariant splitting $E^s \oplus T\mM \oplus E^u$ and
  $C>0$ such that the following hold:
  \begin{itemize}
  \item $\|D\phi_t(x)v\|\ge C e^{(\lambda_2-\epsilon)t}$, $v\in
    E^u(x)$, $t\ge 0$;
  \item $\|D\phi_t(x)v\|\ge C e^{-(\lambda_2-\epsilon)t}$, $v\in
    E^s(x)$, $t\le 0$;
  \item $\|D\phi_t(x)v\|\le C e^{(\lambda_1+\epsilon)|t|}$, $v\in
    T_x\mM$, $t\in \R$.
  \end{itemize}
\end{cor}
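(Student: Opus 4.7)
The plan is to construct $E^u$ and $E^s$ as limits of forward and backward images of the invariant cone fields $C^u$ and $C^s$ provided by Lemma~\ref{mM-cone}, and then to verify the three exponential estimates. Concretely, I set
\[
E^u(x)=\bigcap_{t\ge 0} D\phi_t(\phi_{-t}(x))\,C^{u}(\phi_{-t}(x)), \qquad
E^s(x)=\bigcap_{t\ge 0} D\phi_{-t}(\phi_t(x))\,C^{s}(\phi_t(x)).
\]
The forward invariance of $C^u$ and backward invariance of $C^s$ makes these nested intersections of compact cones in projectivized tangent space, so each is nonempty, closed, and invariant under the linearized flow. The dimension claim ($\dim E^{u,s}=1$) follows because the strong unstable/stable cones are centered around the $u_2$ and $s_2$ directions, which are one-dimensional inside the energy surface, and the contraction (in projective space) of the cones under forward or backward push is governed by the variational matrix (\ref{variational-matrix}) via Lemma~\ref{cone-width}; the intersection therefore collapses to a single line.

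The exponential estimates on $E^u$ and $E^s$ come directly from the second clause of Lemma~\ref{mM-cone}: for $v \in E^u(x) \subset C^u(x)$ one has $\|D\phi_t(x)v\|\ge C e^{(\lambda_2-\epsilon)t}$ for $t\ge 0$, and symmetrically for $E^s$ in backward time. Transversality of $E^u \oplus E^s$ to $T\mM$ is inherited from the transversality of the cones $C^u, C^s$ to $T\mM$ (already in Lemma~\ref{mM-cone}), so the direct sum decomposition $E^s\oplus T\mM \oplus E^u$ is genuine on all of $\mM$.

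The remaining step is the tangential bound $\|D\phi_t(x)v\|\le C e^{(\lambda_1+\epsilon)|t|}$ for $v\in T_x\mM$. I would split into two regions. Outside a small ball $B_r$ around the origin, $\mM$ meets the flow in finite time, so the bound follows from smoothness of the flow together with uniform bounds on the return maps $\Phg^{\pm}$ combined with the local estimates on $\Phl^{\pm\pm}$ from Proposition~\ref{bvp}. Inside $B_r$, the tangent plane to $\mM$ at $x$ is pinched between the planes $\{s_2=u_2=0\}$ as $x\to 0$ (established in the preceding corollary via Corollary~\ref{tangency}), and the variational matrix (\ref{variational-matrix}) restricted to this weak plane has eigenvalues $\pm\lambda_1 + O(\|s,u\|)$; an application of Lemma~\ref{cone-width} with $b_2=\lambda_1+\epsilon$ in a cone around $\{s_2=u_2=0\}$ then yields the desired rate with an $\epsilon$ loss.

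The main obstacle is the transition between the local and global regimes in the tangential estimate: one has to glue the Shil'nikov-based estimates near $0$ (which are uniform only for long boundary-value times) with the finite-time compact estimates along $\gamma^\pm$ so that a single constant $C$ and exponent $\lambda_1+\epsilon$ work on all of $\mM$. I would handle this by fixing $r$ small enough that the normal-form rate $\lambda_1+\epsilon/2$ applies inside $B_r$, then absorbing the bounded number of global-map crossings into the multiplicative constant $C$, using that each orbit in $\mM$ spends only uniformly bounded time outside $B_r$ between consecutive crossings of $\Sigma^{s,u}_\pm$.
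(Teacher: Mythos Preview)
Your construction of $E^u$ and $E^s$ as nested intersections of pushed cones is exactly the ``standard hyperbolic argument'' the paper cites from \cite{HPS77}; on that part the two proofs coincide.

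For the tangential estimate, your approach is workable but the paper takes a much shorter route that avoids the local/global gluing you flag as the main obstacle. The key observation, already recorded just before this corollary, is that $\mM$ is a Lipschitz graph over the $(s_1,u_1)$ plane (a consequence of the cone fields in Lemma~\ref{mM-cone} together with Proposition~\ref{Lipschitz}). This immediately gives a \emph{uniform} bound $\|(v_{s_2},v_{u_2})\|\le L\|(v_{s_1},v_{u_1})\|$ for every $v\in T_x\mM$ and every $x\in\mM$, not just asymptotically as $x\to 0$. With this in hand one only needs the variational estimate $\|(v_{s_1},v_{u_1})(t)\|\le Ce^{(\lambda_1+\epsilon)|t|}$ on the weak components, which follows directly from the normal form (\ref{eq:norm-form2}); the strong components come along for free via the Lipschitz bound. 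Your pinching argument (tangent plane $\to\{s_2=u_2=0\}$ as $x\to 0$) is the asymptotic version of this same fact, but using it forces you to split into regimes and glue, whereas the uniform Lipschitz graph property handles everything at once.
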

\begin{proof}
  The existence of $E^s$ and $E^u$, and the expansion/contraction properties
  follows from  standard hyperbolic arguments, see \cite{HPS77}, for example.
  We now prove that third statement. Denote $v(t)=D\phi_t(x)v$ for $v\in T_x\mM$.
  Decompose $v(t)$ into $(v_{s_1}, v_{s_2}, v_{u_1}, v_{u_2})$, we have
  $\|(v_{s_1}, v_{u_1})(t)\|\le C e^{(\lambda_1 + \epsilon)|t|}$. However,
  since $\mM$ is a Lipschitz graph over $(s_1, u_1)$, the $(v_{s_2}, v_{u_2})$
  components are bounded uniformly by the $(v_{s_1}, v_{u_1})$ components.
  The norm estimate follows.
\end{proof}

\begin{rmk}
Part 1 of Theorem~\ref{NHIC-mechanical} follows from the last two corollaries.
\end{rmk}

We now come to the double leaf case. Denote
$l(p^1)= \bigcup_{e\le E\le E_0}p^\sigma(E)$, where $p^\sigma(E)$ is the fixed point in Proposition~\ref{double}.  We have that $l(p^{\sigma_1})$
sweeps out $\mM_h^{e,E_0}$ in \emph{finite} time. As a consequence
$\mM_h^{e, E_0}$ is a $C^1$ manifold.  Similar to Lemma~\ref{mM-cone},
the flow on $\mM_h^{e, E_0}$ also preserves the strong stable/unstable
cone fields. The fact that $\mM_h^{e, E_0}$ is normally hyperbolic follows
from the invariance of the cone fields, using the same proof as that of
Corollary~\ref{nhm}. This concludes the proof the Theorem~\ref{NHIC-mechanical}, part 2.

%Z, I wrote a derivation of the slow/fast system.

\subsection{Derivation of the slow mechanical system }\label{slow-fast}

We denote by $p_0$ the intersection of the resonance $\Gamma_{\vec k}$ and $\Gamma_{\vec k'}$. This means
$$ \vec k_1 \cdot \partial_pH(p_0) + k_0 =0, \quad \vec k_1' \cdot \partial_pH(p_0) + k_0' =0. $$
We consider the autonomous version of the system  $H_\epsilon(\theta, p, t, E) = H_0 + \epsilon H_1(\theta, p, t) + E$.  In the $\sqrt{\epsilon}$ neighborhood of $p_0$, we have the following the normal form
$$ H_\epsilon(\theta, p, t, E)  = H_0(p) + \epsilon Z(\vec k_1 \cdot \theta + k_0, \vec k_1' \cdot \theta + k_0', p)  + \epsilon R  + E,$$
where $\|R\|_{C^2} = O(\epsilon)$. Denote $\theta^{ss} = \vec k_1 \cdot \theta + k_0$ and $\theta^{sf}= \vec k_1' \cdot \theta + k_0'$ and $\theta^s = (\theta^{ss}, \theta^{sf})$, we further write
\begin{multline*}
  H_\epsilon(\theta, p, t, E) = H_0(p_0) +  \partial H_0(p_0) \cdot (p-p_0) + E + \langle \partial^2_{pp} H_0(p_0) (p-p_0), p-p_0\rangle \\
  + \epsilon Z(\theta^s, p_0)  + \epsilon R',
\end{multline*}
where $R' = R + Z(\theta^s,p) - Z(\theta^s, p_0) + \frac1{\epsilon}O(|p-p_0|^3)$. 
We make a symplectic coordinate change  $(\theta, p, t, E) \to (\theta^s, p^s, t, E')$  by taking
$$
\begin{bmatrix}
  p \\ E
\end{bmatrix} =
\begin{bmatrix}
  B^T & 0 \\
  k_0, k_0' & 1
\end{bmatrix}
\begin{bmatrix}
  p^s \\ E'
\end{bmatrix},\quad  \text{ where } B=
\begin{bmatrix}
\vec k_1 \\ \vec k_1'
\end{bmatrix}.
$$
Denote $p^s_0 = (B^T)^{-1} p_0$, we have
\begin{multline*} \partial_p H_0(p_0) (p-p_0) + E = \partial_p H_0(p_0) B^T (p^s-p^s_0) + k_0 p^{ss} + k_0' p^{sf} + E' \\
  = (\partial_pH_0(p_0) \cdot k_1 + k_0)(p^{ss}-p_0^{ss}) +  (\partial_pH_0(p_0) \cdot k_1' + k_0')(p^{sf}-p_0^{sf})  + (k_0, k_0')\cdot (p^{ss}_0, p^{sf}_0) \\
  = (k_0, k_0')\cdot (p^{ss}_0, p^{sf}_0),
\end{multline*}
hence
\begin{multline*} H_\epsilon(\theta^s, p^s, t, E') = H_0(p_0)  +  (k_0, k_0')\cdot (p^{ss}_0, p^{sf}_0)  + E'\\
   + \langle B\partial^2_{pp}H_0(p_0)B^T (p^s - p_0^s), p^s - p_0^s\rangle + \epsilon Z(\theta^s, p_0) +  \epsilon R'.
\end{multline*}
Denote $ I^s = (p^s-p^s_0)/\sqrt{\epsilon}$, 
\be \label{slow-kinetic}
K(I^s) = \langle B\partial^2_{pp}H_0(p_0)B^T I^s, I^s \rangle,
\ee
\be
\label{slow-potential}
U(\theta^s) = -Z(\theta^s, p_0).
\ee
The flow of $H_\epsilon(\theta^s, p^s, t)$ is conjugate to the flow of the rescaled Hamiltonian
\begin{equation}  \label{eq:slow-system} \frac{1}{\sqrt{\epsilon}} H_\epsilon(\theta^s, \sqrt{\epsilon} I^s, t) = c_0/\sqrt{\epsilon} + \sqrt\epsilon (K(I^s) -  U(\theta^s)) + \sqrt\epsilon R'(\theta^s, \sqrt{\epsilon} I^s,t ),
\end{equation}
where $c_0 = H_0(p_0) + (k_0, k_0')\cdot (B^T)^{-1}p_0$.
By a direct computation, we have the $C^2$ norm of $R'(\cdot, \sqrt{\epsilon}\, \cdot, \cdot)$ is bounded by $O(\sqrt\epsilon)$. 

\subsection{Normally hyperbolic manifold for double resonance}
\label{sec:gen-nhic}

We now prove Corollary~\ref{existence-NHIC}. By (\ref{eq:slow-system}), our Hamiltonian system is locally equivalent to 
$$ H^s_\epsilon(\theta^s, I^s, t) = K(p) - U(\theta) + O(\sqrt{\epsilon}). $$
 For $\epsilon=0$, the system
$H_0^s$ admits a normally hyperbolic manifold $\mM \times \T$. Moreover,
all conclusions of Corollary~\ref{nhm} carries over to this system. It
is well known that a compact normally hyperbolic manifold without boundary survives small perturbations (see \cite{HPS77}, for example). For manifolds with boundary, we can smooth out the perturbation near the boundary, so that the perturbation preserves the boundary (see \cite{BKZ}, Proposition B.3). This produces a weakly invariant NHIC, in the sense that any invariant set near $\mM \times \T$ and away from the boundary must be contained in the NHIC.   

This concludes the proof of Corollary~\ref{existence-NHIC}.

\appendix

\section{Formulation of the results (intermediate energies)}
\label{sec:intermediate}

Consider the slow mechanical system
$H^s(p^s,\th^s)=K(p^s)-U(\th^s),\ U(\th)\ge 0,\ U(0)=0$
as in (\ref{slow-mechanical}) and $E_0>0$ is small. For
each non-negative energy surface $S_E=\{H^s=E\}$
consider the Jacobi metric $\rho_E(\th)=2(E+U(\th))K$
as defined in (\ref{Jacobi-metric}). Orbits of $H^s$
restricted on $S_E$ are reparametrized geodesics of
$\rho_E$. Fix a homology class $h\in H_1(\T^s,\Z)$.
In the same way as in \cite{Ma1} impose the following assumptions:

\begin{itemize}
 \item [B1.] For each $E > E_0$, each shortest closed geodesic
 $\gm^h_E$ of $\rho_E$ in the homology class $h$ is nondegenerate
 in the sense of Morse.

 \item [B2.] For each $E > E_0$, there are at most two shortest closed
 geodesics of $\rho_E$ in the homology class $h$.

Let $E^* > E_0$ be such that there are two shortest geodesics $\gm^h_{E^*}$
and $\overline \gm^h_{E^*}$ of $\rho_{E^*}$ in the homology class $h$.
Due to non-degeneracy there is local continuation of $\gm^h_{E^*}$ and
$\overline \gm^h_{E^*}$ to locally shortest geodesics $\gm^h_E$ and
$\overline \gm^h_E$. For a smooth closed curve $\gm$ denote by
$\ell_E(\gm)$ its $\rho_E$-length.

\item [B3.] Suppose
\[
\dfrac{d(\ell_E(\gm^h_E))}{dE}|_{E=E^*} \ne
\dfrac{d(\ell_E(\overline \gm^h_E))}{dE} |_{E=E^*}.
\]
\end{itemize}

\blm There is an open dense set of smooth mechanical systems
with properties B1-B3.
\elm

It follows from condition B3 that there are only finitely many
values $\{E_j\}_{j=1}^N$ where there are two minimal geodesics
$\gm^h_{E}$ and $\overline \gm^h_{E}$. To fit
boundary conditions we have $E_0^{-1}=E_{N+1}$. There is $\dt>0$
such that for any $j=1,\dots,N$ the unique shortest geodesic
$\gm^h_E$ has a smooth continuation
$\gm_E^h$ for $E\in [E_j-\dt,E_{j+1}+\dt]$.

Consider the union
\[
 \sM^h_j=\cup_{E\in [E_j-\dt,E_{j+1}+\dt]} \gm^h_E.
\]
It follows from Morse non-degeneracy of $\gm^h_E$ that
$\sM^h_j$ is a NHIC. In the same way as we prove Corollary
 we can prove

\begin{cor} \label{existence-NHIC-bis} For each $j=1,\dots,N$
the system $H_\eps$ has a normally hyperbolic
%K, invariance is not true, also we need to restrict the domain
manifold $\sM^h_{j,\eps}$
which is weakly invariant, i.e. the Hamiltonian vector field
of $H_\eps$ is tangent to $\sM^h_{j,\eps}$.
Moreover, the intersection of $\sM^h_{j,\eps}$  with
the regions  $\{E_j-\dt\le H^s \le E_{j+1}+\dt\}\times \T$
is a graph over $\sM^h_j$.
\end{cor}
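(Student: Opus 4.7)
The plan is to mirror the proof of Corollary~\ref{existence-NHIC} given in Section~\ref{sec:gen-nhic}, replacing the small-energy NHIC by the intermediate-energy NHIC $\sM^h_j$. First, I would invoke the slow-fast reduction of Section~\ref{slow-fast}: in a $\sqrt\eps$-neighborhood of the double resonance point $p_0$, after a symplectic coordinate change, rescaling $I^s=(p^s-p_0^s)/\sqrt\eps$, and dividing by $\sqrt\eps$, the flow of $H_\eps$ becomes conjugate to the flow of
\[
c_0/\sqrt\eps + \sqrt\eps\,\bigl(K(I^s)-U(\th^s)\bigr) + \sqrt\eps\, R'(\th^s,\sqrt\eps\, I^s,t),
\]
where $\|R'(\cdot,\sqrt\eps\cdot,\cdot)\|_{C^2} = O(\sqrt\eps)$. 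After a time reparametrization the question reduces to showing persistence of the autonomous invariant set $\sM^h_j\times\T$ of the unperturbed slow system $H^s$ (thought of as periodically forced in $t$) under a $C^2$-perturbation of size $O(\sqrt\eps)$.

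Second, I would check that $\sM^h_j\times\T$ is genuinely a NHIC for the unperturbed flow. By assumption B1 each closed geodesic $\gm^h_E$ with $E\in[E_j-\dt,E_{j+1}+\dt]$ is Morse non-degenerate as a minimizer of the $\rho_E$-length in class $h$, which yields a hyperbolic splitting of the normal bundle along $\gm^h_E$ with exponential rates $\lb^\pm(E)$ bounded away from zero. Compactness of $[E_j-\dt,E_{j+1}+\dt]$, together with smooth dependence of the geodesic on $E$ provided by Morse non-degeneracy, gives uniform normal rates on $\sM^h_j$ dominating the tangential expansion (which is purely polynomial along the family). This places $\sM^h_j\times\T$ in the scope of the Hirsch--Pugh--Shub persistence theorem \cite{HPS77}, with one subtlety: $\sM^h_j$ has boundary at $E=E_j-\dt$ and $E=E_{j+1}+\dt$.

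Third, to deal with the boundary I would follow \cite{BKZ}, Proposition B.3: cut off the perturbation $R'$ by a smooth bump supported in a small collar of $\partial\sM^h_j\times\T$ so that the modified vector field coincides with the unperturbed one near the boundary and with the original perturbed system on the region $\{E_j-\dt\le H^s\le E_{j+1}+\dt\}\times\T$. For the modified system the boundary $\partial\sM^h_j\times\T$ is truly invariant, so the standard persistence theorem applies and produces a $C^1$ invariant manifold $\sM^h_{j,\eps}$ that is a $C^1$-graph over $\sM^h_j\times\T$ at distance $O(\sqrt\eps)$. On the region where the cutoff is inactive, $\sM^h_{j,\eps}$ is tangent to the Hamiltonian vector field of the original $H_\eps$, yielding the asserted weak invariance and graph property.

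The main obstacle is the quantitative version of the normal hyperbolicity estimate: one needs the spectral gap between the normal rates $\lb^\pm(E)$ and the tangential rate along the family $\{\gm^h_E\}_E$ to exceed the $C^1$-size of the perturbation uniformly in $E\in[E_j-\dt,E_{j+1}+\dt]$. I would establish this by observing that the tangential expansion of the unperturbed flow on $\sM^h_j$ is only polynomial in time (the family is foliated by periodic orbits of uniformly bounded period), so any fixed positive lower bound on $\lb^\pm(E)$ beats the perturbation once $\eps$ is small enough; the uniform positive lower bound follows from B1 and continuity in $E$ on the compact parameter interval.
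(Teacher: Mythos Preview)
Your proposal is correct and follows essentially the same approach as the paper: reduce to the slow system via Section~\ref{slow-fast}, observe that Morse non-degeneracy (B1) on the compact energy interval makes $\sM^h_j\times\T$ a genuine NHIC for the unperturbed flow, and handle the boundary by the cutoff argument of \cite{BKZ}, Proposition~B.3, exactly as in Section~\ref{sec:gen-nhic}. The paper in fact gives no independent proof, stating only that the argument is very similar to that of Corollary~\ref{existence-NHIC} and adding a brief remark on isolating blocks; your write-up supplies the details the paper omits.
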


Proof of Corollary \ref{existence-NHIC-bis} is very similar
to the proof of Corollary \ref{existence-NHIC}.
Notice that the NHIC is $3$-dimensional.  It has one-dimensional
stable and one-dimensional unstable direction. Consider a box
neighborhood at each point on $\sM^h_{j,\eps}$ formed by taking
$\sg$-box in stable/unstable directions. Taking $\eps$ small
we can make sure that the time-periodic system $H_\eps$ satisfies
isolating block property.

\section{Non-self-intersecting curves on the torus}
\label{sec:homology}

We prove Lemma~\ref{homotopy} in this section. 

Denote $\gamma_1=\gamma^{h_1}_0$ and $\gamma_2=\gamma^{h_2}_0$ and $\gamma=\gamma_h^0$. Recall that $\gamma$ has homology class $n_1 h_1 + n_2 h_2$ and is the concatenation of $n_1$ copies of $\gamma_1$ and $n_2$ copies of $\gamma_2$. Since $h_1$ and $h_2$ generates $H_1(\T^2, \Z)$, by introducing a linear change of coordinates, we may assume $h_1=(1,0)$ and $h_2=(0,1)$. 

Given $y\in \T^2 \setminus \gamma\cup \gamma_1 \cup \gamma_2$, the fundamental group of $\T^2\setminus \{y\}$ is a free group of two generators, and in particular, we can choose $\gamma_1$ and $\gamma_2$ as generators. (We use the same notations for the closed curves $\gamma_i$, $i=1,2$ and their homotopy classes). The curve $\gamma$ determines an element 
$$\gamma = \prod_{i=1}^n \gamma_{\sigma_i}^{s_i}, \quad \sigma_i \in \{1,2\}, \, s_i \in \{0,1\} $$
of this group. Moreover, the translation $\gamma_t(\cdot):=\gamma(\cdot +t)$ of $\gamma$ determines a new element by cyclic translation, i.e.,
$$ \gamma_t = \prod_{i=1}^n \gamma_{\sigma_{i+m}}^{s_{i+m}}, \quad m\in \Z,$$
where the sequences $\sigma_i$ and $s_i$ are extended periodically. We claim the following: 

There exists a unique (up to translation) periodic sequence $\sigma_i$ such that $\gamma=\prod_{i=1}^n \gamma_{\sigma_{i+m}}$ for some $m\in \Z$, independent of the choice of $y$. Note that in particular, all $s_i=1$. 

The proof of this claim is split into two steps. 

\emph{Step 1.} Let $\gamma_{n_1/n_2}(t)=\{\gamma(0)+(n_1/n_2, 1)t, \, t\in \R\}$. We will show that $\gamma$ is isotropic (homotopic along non-self-intersecting curves) to $\gamma_{n_1/n_2}$. To see this, we lift both curves to the universal cover with the notations $\tilde{\gamma}$ and $\tilde\gamma_{n_1/n_2}$. Let $p.q\in \Z$ be such that $ pn_1 -qn_2 =1$ and define  
$$ T\tilde\gamma(t) = \tilde\gamma(t) + (p,q). $$
As $T$ generates all integer translations of $\tilde\gamma$, $\gamma$ is non-self-intersecting if and only if $T\tilde\gamma\cap \tilde\gamma = \emptyset$. Define the homotopy $\tilde\gamma_\lambda = \lambda \tilde\gamma +(1-\lambda) \tilde\gamma_{n_1/n_2}$, it suffices to prove $T\tilde\gamma_\lambda \cap \tilde\gamma_\lambda = \emptyset$. Take an additional coordinate change 
$$
\begin{bmatrix}
  x \\ y
\end{bmatrix} \mapsto
\begin{bmatrix}
  n_1 & p \\ n_2 & q
\end{bmatrix}^{-1}
\begin{bmatrix}
  x \\ y
\end{bmatrix}, 
$$
then under the new coordinates $T\tilde\gamma(t) = \tilde\gamma(t) +(1,0)$. 

Under the new coordinates, $T\tilde\gamma\cap \tilde\gamma = \emptyset$ if and only if any two points on the same horizontal line has distance less than $1$. The same property carries over to $\tilde\gamma_\lambda$ for $0\le \lambda <1$, hence $T\tilde\gamma_\lambda \cap \tilde\gamma_\lambda = \emptyset$.

\emph{Step 2.} By step 1, it suffices to prove that  $\gamma = \gamma_{n_1/n_2}$ defines  unique sequences $\sigma_i$ and $s_i$. Since $\tilde\gamma_{n_1/n_2}$ is increasing in both coordinates, we have $s_i=1$ for all $i$. Moreover, choosing a different $y$ is equivalent to shifting the generators $\gamma_1$ and $\gamma_2$. Since the translation of the generators is homotopic to identity, the homotopy class is not affected. This concludes the proof of Lemma~\ref{homotopy}.

\subsection*{Acknowledgment}

 {\it The first author is partially
supported by NSF grant DMS-1101510.
%Z, added Fields Institute aknowledgement
 The second author wishes to thank 
the Fields Institute program ``transport and disordered system'', where part of the work was carried out. The authors are grateful
%K,if you mind, we can say the first author
to John Mather for several inspiring discussions.
The course of lectures \cite{Ma3} he gave was very helpful
for the authors.}

\def\cprime{$'$}

 \end{document}